\newcommand{\R}{\mathbb R}
\newcommand{\C}{\mathbb C}
\newcommand{\Z}{\mathbb Z}%
\newcommand{\N}{\mathbb N}
\newcommand{\what}{\widehat}
\newcommand{\setm}{\setminus}
\newtheorem{theorem}{Theorem}[section]
\newtheorem{lemma}[theorem]{Lemma}
\theoremstyle{definition}
\theoremstyle{definition}
\newtheorem{remark}[theorem]{Remark}
\numberwithin{equation}{section}
\begin{document}
\title{A Genuine analogue of WIENER TAUBERIAN THEOREM FOR  $ \mathrm {SL}(2, \R)$}
\author{Tapendu Rana}
\subjclass[2010]{Primary 43A85; Secondary 22E30}
\keywords{Wiener Tauberian theorem,  estimate of hypergeometric functions,  resolvent transform}

\maketitle
\begin{abstract}
We prove a genuine analogue of Wiener Tauberian theorem for integrable functions on  $\mathrm {SL}(2, \R).$  
\end{abstract}
 \section{\textbf{Introduction}}
 
Let $f \in L^1(\R)$ and $\what{f}$ be its Fourier transform. The celebrated   Wiener-Tauberian (W-T) theorem says that the ideal generated by $f$ in $L^1(\R)$ is dense in $L^1(\R)$ if and only if $\what{f}$ is  nowhere vanishing on $\R$. This theorem has been extended to abelian groups. In 1955, Ehrenpreis and Mautner observed that the exact analogue of the  theorem above fails for the commutative algebra of the integrable $K$ -biinvariant functions on the group $\mathrm {SL}(2, \R) $, where $K = \mathrm{SO}(2)$ is a maximal compact subgroup. Nonetheless the authors  proved that if a $K $-biinvariant integrable function $f$ on $G$ satisfies a ``not-to-rapid decay" condition and  nonvanishing condition on a extended strip $S_{1,\delta} = \{ \lambda \in \C \mid |\Re \lambda| \leq 1+ \delta \} $ for $\delta >0$, etc, that is,
\begin{equation*} 
\what{f}(\lambda)\not=0 \text{ for all } \lambda\in S_{1, \delta},
\end{equation*}
and ``not-to-rapid decay" condition
\begin{equation*}
\limsup_{|t|\rightarrow\infty}|\what{f}(it)|e^{Ke^{|t|}}>0 \text{ for all  } K>0 
\end{equation*}
  together with some other conditions then the ideal generated by $f$ in $L^1(G//K)$ is dense in $L^1(G//K)$ (see \cite{EM} for the precise statements).  Using the extended strip condition the results has been generalised to the full group $\mathrm {SL}(2, \R)$ (see \cite{Sarkar-1997}) and to the real rank one semi simple Lie groups (see \cite{Ben}, \cite{Ben-1}, \cite{Sarkar-1998}, \cite{Sitaram}). We also refer \cite{Naru-2009}, and \cite{Naru-2011}  for an analogue of W-T theorem for semisimple Lie groups of arbitrary real rank.
  
  Y. Ben Natan, Y. Benyamini, H. Hedenmalm and Y. Weit (in \cite{ Ben,Ben-1}) proved a  genuine analogue  of  the W-T theorem without the extended strip condition for $L^1(\mathrm {SL}(2, \R)//\mathrm{SO}(2))$. In \cite{PS} the authors extended this result to real rank one semisimple Lie group in the $K$ -biinvariant setting.  In this article we generalize the result to the full group $\mathrm {SL}(2, \R)$ and therefore this improves the corresponding result of \cite{Sarkar-1997}..
  
  Let $G$ be the group $ \mathrm {SL}(2, \R)$ and $K$ be its maximal compact subgroup $\mathrm{SO}(2)$. A complex valued function $f$ on $G$ is said to be of left (resp. right) $K$-type $n$ if \begin{equation}
 f(kx) =e_n(k) f(x)\text{ (resp. } f(xk) =e_n(k) f(x) ) \text{ for all } k \in K \text{ and } x \in G,
  \end{equation} where $e_n(k_\theta) = e^{in \theta}$. For a class of functions $\mathcal{F} $ on $G$ (e.g. $L^1(G)$),  $\mathcal{F}_n $ denotes the corresponding subclass of functions of right $n$ type and  $\mathcal{F}_{m,n} $ will denote the subclass of $\mathcal{F}_n$ which are also of left type $m$. We denote the subclass of  $\mathcal{F} $ consisting of functions with integral zero  by $\mathcal{F}^0 $.
   
   The main result (Theorem \ref{WTT for L1(G)}) of this article is an analogue of W-T theorem to the the full group without the redundant extended strip condition. We first prove the W-T theorem for $L^1(G)_{n,n}$ (Theorem \ref{WTT for L1(G)n,n}) for all $n \in \Z$. This is the most crucial step in the direction of proving the W-T theorem to the full group. 
   Before stating our main result we introduce some notation. For a function $f \in L^1(G)$ its principal and discrete parts of the Fourier transform will be denoted by $\what{f}_H$  and $\what {f}_B$ respectively. Let $M =\{ \pm I\}$ and  $\what M = \{ \sigma^+, \sigma^- \}$ which consists  the trivial ($\sigma^+$) and the non-trivial ($\sigma^-$) irreducible representations of $M$. The representation $\pi_{\sigma^-, 0}$  has two irreducible subrepresentations, so called mock series. We will denote them  by $D_+$ and $D_-$. The representation spaces of $D_+$ and $D_-$ contain  $e_n \in L^2(K)$ respectively for positive odd $n$'s and negative odd $n$'s. For each $\sigma \in \what M$, $\Z^{\sigma} $ stands for the set of even integers for $\sigma =\sigma^+$, and the set of odd integers for $\sigma =\sigma^-$.  Moreover, we express $-\sigma$ by $-\sigma^+ =\sigma^-$ and $-\sigma^- = \sigma^+$. We define,
\begin{align*}
& S_1 = \lbrace \lambda \in \C \mid |\Re \lambda | \leq 1 \rbrace \text{ and }
& \Gamma_n =\begin{cases} \lbrace
 k \mid 0 <k < n \text { and } k \in \Z^{-\sigma} \rbrace \text{ if } n > 0 \\
\lbrace k \mid n <k < 0\text { and } k \in \Z^{-\sigma} \rbrace \text{ if } n < 0\end{cases}.
\end{align*}
  For any function $F$ on $i\R$, we let 
 \begin{equation}
 \delta_\infty^{\pm}(F)=-\limsup_{t\rightarrow\infty} e^{-\frac{\pi}{2}t}\log|F(\pm it)|. 
 \end{equation}
 We now state our main theorem.
 \begin{theorem}\label{WTT for L1(G)}
 Let $\{f^\alpha\mid \alpha\in \Lambda\}$ be a collection of functions in $L^1(G)$  such that the collections $\{\what{f^\alpha_H} \mid\alpha\in \Lambda\}$  and $\{\what{f^\alpha_B} \mid\alpha\in \Lambda\}$  have  no common zero in $\what{M} \times S_1 \cup \lbrace D_+ , D_- \rbrace$ and $Z^*$ respectively. If  $\inf\limits_{\alpha\in\Lambda \, m,n  \in \Z}\delta_\infty^{\pm}(\what{f^\alpha_H})_{m,n}=0  $, then the $ L^1(G) $ -bimodule generated by  $\{f^\alpha\mid \alpha\in \Lambda\}$ is dense in $ L^1(G). $
 
 Moreover, if the integral of $f^\alpha$ is zero for all $\alpha$, then the ideal is dense in $L^1(G)^0$.
\end{theorem}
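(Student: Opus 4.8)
The plan is to deduce Theorem~\ref{WTT for L1(G)} from the $K$-bi-type case, Theorem~\ref{WTT for L1(G)n,n}, by first descending to the diagonal $K$-type subalgebras and then ascending back to $L^1(G)$. Write $I$ for the closed $L^1(G)$-bimodule generated by $\{f^\alpha\mid\alpha\in\Lambda\}$. Two elementary facts organise the $K$-type bookkeeping: since $-I$ is central, $L^1(G)_{m,n}=\{0\}$ unless $m\equiv n\pmod2$; and the orthogonality relations for $K$ give $L^1(G)_{a,b}*L^1(G)_{c,d}=\{0\}$ for $b\ne c$ and $L^1(G)_{a,b}*L^1(G)_{b,c}\subseteq L^1(G)_{a,c}$. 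The projections $P^L_n,P^R_n$ onto left, resp.\ right, $K$-type $n$ are bounded on $L^1(G)$ and preserve $I$ (the bimodule $I$ is stable under left and right $K$-translation, since each translate of $f\in I$ is an $L^1$-limit of elements $(L_ku_j)*f$ with $(u_j)$ an approximate identity, and $P^L_n,P^R_n$ are Bochner integrals of such translates). Consequently, for $h\in L^1(G)_{a,b}$ one has $(P^L_aP^R_au_j)*h\to h$ in $L^1(G)$ with $P^L_aP^R_au_j\in L^1(G)_{a,a}$, so $L^1(G)_{a,b}\subseteq I$ whenever $L^1(G)_{a,a}\subseteq I$; and since the linear span of the blocks $L^1(G)_{a,b}$ is dense in $L^1(G)$ (Fej\'er summation in the two $K$-variables), it suffices to prove $L^1(G)_{n,n}\subseteq I$ for every $n\in\Z$.

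Fix $n$ and put $J_n:=I\cap L^1(G)_{n,n}$, a closed two-sided ideal of the Banach algebra $L^1(G)_{n,n}$; then $L^1(G)_{n,n}\subseteq I$ amounts to $J_n=L^1(G)_{n,n}$, which I would obtain by applying Theorem~\ref{WTT for L1(G)n,n} to $J_n$. The crucial observation is that $J_n$ contains the ``conjugated'' elements $\varphi_{g,\alpha,h}:=g*f^\alpha*h$ for all $g\in L^1(G)_{n,b}$, $h\in L^1(G)_{a,n}$ with $a,b$ of the parity of $n$, and these already detect everything. For the non-vanishing hypothesis: at any $\pi$ occurring in the Plancherel decomposition of $L^1(G)_{n,n}$ (a principal series $\pi_{\sigma,\lambda}$ with $n\in\Z^\sigma$, $\lambda\in S_1$; a mock series $D_\pm$ when $n$ is odd; or a discrete series whose $K$-spectrum contains $e_n$ — the relevant indices being read off $Z^*$ and $\Gamma_n$), the hypothesis gives $\alpha_0$ with $\pi(f^{\alpha_0})\ne0$, hence $\langle\pi(f^{\alpha_0})e_a,e_b\rangle\ne0$ for some $K$-types $a,b$; picking $g,h$ with $\langle\pi(g)e_b,e_n\rangle$ and $\langle\pi(h)e_n,e_a\rangle$ nonzero (possible, these being nonzero continuous matrix coefficients) makes $\pi(\varphi_{g,\alpha_0,h})\ne0$, so $J_n$ has no common zero on the type-$n$ spectrum. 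For the not-too-rapid-decay hypothesis, one computes $\what{(\varphi_{g,\alpha,h})_H}=(\what{g_H})_{n,b}\,(\what{f^\alpha_H})_{b,a}\,(\what{h_H})_{a,n}$ on the relevant line $i\R$ and invokes the subadditivity of $\delta_\infty^\pm$ together with the vanishing of $\delta_\infty^\pm$ on the outer factors when $g,h$ are chosen with not-too-rapidly decaying transforms; this matches $\inf_{\varphi\in J_n}\delta_\infty^\pm(\what{\varphi_H})$ against $\inf_{\alpha,m,n}\delta_\infty^\pm\big((\what{f^\alpha_H})_{m,n}\big)=0$. Theorem~\ref{WTT for L1(G)n,n} then yields $J_n=L^1(G)_{n,n}$ for every $n$, and the reductions of the first paragraph give $I=L^1(G)$.

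For the final statement, if $\int_Gf^\alpha=0$ for all $\alpha$, then $I$ is contained in the closed bi-ideal $L^1(G)^0$, the kernel of the continuous character $f\mapsto\int_Gf$ of the convolution algebra. Running the same argument inside $L^1(G)^0$ — where the only block that changes is $L^1(G)^0_{0,0}=\{f\in L^1(G)_{0,0}:\int_Gf=0\}$, the trivial representation being the sole one now dropped — and using the integral-zero form of Theorem~\ref{WTT for L1(G)n,n} gives density of $I$ in $L^1(G)^0$.

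The step I expect to be the main obstacle is the last one above in disguise: verifying that the global Tauberian hypothesis really does descend to every diagonal block $L^1(G)_{n,n}$, uniformly in $n$ and across both parities, rather than only to those blocks directly witnessed by $\inf_{\alpha,m,n}\delta_\infty^\pm\big((\what{f^\alpha_H})_{m,n}\big)$. This forces a careful analysis of how $\delta_\infty^\pm$ behaves under the passage from $f^\alpha$ to the conjugated elements $\varphi_{g,\alpha,h}$ and under change of $K$-type — one needs that multiplying by a well-chosen auxiliary factor, or shifting the $K$-type, alters $|\what{f_H}(\pm it)|$ only by a factor whose logarithm is $o\!\left(e^{\pi t/2}\right)$ — and this rests precisely on the sharp estimates for the $\mathrm{SL}(2,\R)$ (hypergeometric) matrix coefficients that already underlie Theorem~\ref{WTT for L1(G)n,n}. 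The remaining ingredients — the $K$-orthogonality and approximate-identity bookkeeping and the stability of $I$ under $K$-type projections — are routine, but need to be set up so as to handle both sides of the bimodule simultaneously.
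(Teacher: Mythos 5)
Your proposal is correct in outline but takes a genuinely different route from the paper. The paper passes through the intermediate space $L^1(G)_n$ (single right $K$-type) by constructing explicit test functions $g_{j,m}\in\mathcal{C}^1(G)_{j,m}$ via the Schwartz-space isomorphism, with $\what{g_{j,m}}_H(\lambda)=e^{-\lambda^4}Q_{j,m}(\lambda)$ for a carefully chosen polynomial $Q_{j,m}$; the no-common-zero condition for the shifted collection is then confirmed by an explicit five-part case analysis at $\lambda\in\{0,\pm 1\}$, since those are precisely the points where the forced zeros of $Q_{j,m}$ (the constraints coming from membership in $\mathcal{C}^1_H(\what G)_{j,m}$) could collide with the zeros of the $\what{f_m}_H$. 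You instead descend directly to the diagonal blocks by forming $J_n:=I\cap L^1(G)_{n,n}$ and verifying the hypotheses of Theorem~\ref{WTT for L1(G)n,n} for the conjugates $g*f^\alpha*h$ by an abstract representation-theoretic existence argument: at each $\pi$ in the type-$n$ spectrum you choose $g,h$ whose matrix coefficients at $\pi$ are nonvanishing. Both proofs rest on the same technical core (Theorem~\ref{WTT for L1(G)n,n} and Barker's Schwartz-space isomorphism); you trade the paper's explicit zero-tracking for a more conceptual argument.

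Two details need more care to close the argument.  First, $(P^L_aP^R_au_j)*h\to h$ is not automatic for an arbitrary approximate identity $u_j$: one should take $u_j$ to be $K$-central (or appeal to the standard fact that $L^1(G)_{a,a}$ carries a bounded approximate identity for the left module action on $L^1(G)_{a,b}$), otherwise the left $K$-type of $u_j*h$ need not be $a$.  Second, the ``subadditivity of $\delta^\pm_\infty$'' in fact runs the wrong way: from $\limsup(x+y)\le\limsup x+\limsup y$ one only gets $\delta^\pm_\infty(F_1F_2)\ge\delta^\pm_\infty(F_1)+\delta^\pm_\infty(F_2)$, which is useless here.  What you actually need is the existence, for every $a,b$, of $g\in L^1(G)_{n,b}$ and $h\in L^1(G)_{a,n}$ such that
\begin{equation*}
\lim_{t\to\infty}e^{-\frac{\pi}{2}t}\log\bigl|\what{g_H}(\pm it)_{n,b}\bigr|
 =\lim_{t\to\infty}e^{-\frac{\pi}{2}t}\log\bigl|\what{h_H}(\pm it)_{a,n}\bigr|=0
\end{equation*}
as genuine limits, for then the limsup in $\delta^\pm_\infty\bigl(\what{(\varphi_{g,\alpha,h})_H}\bigr)$ equals that of $(\what{f^\alpha_H})_{b,a}$ alone.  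A compactly supported $g$ will not in general do this (its transform may vanish along $i\R$), and this is exactly the point where the paper's choice $e^{-\lambda^4}Q_{j,m}(\lambda)$ pays off: the nonvanishing Gaussian factor keeps $\log|\what{g_H}(it)|\sim -t^4$, so $e^{-\pi t/2}\log|\what{g_H}(it)|\to 0$ while simultaneously putting $\what{g_H}$ in the Schwartz image.  Once you make this choice explicit, your argument closes and the remaining bookkeeping (the reduction $L^1(G)_{a,a}\subseteq I\Rightarrow L^1(G)_{a,b}\subseteq I$, and the $L^1(G)^0$ variant) is as you describe.
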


For $f \in L^1(G)_n$, the natural domain of the principal part $\what f_H$ and the discrete part  $\what f_B$ of the Fourier transform is $S_1$ and $\Gamma_n$ respectively. To prove Theorem \ref{WTT for L1(G)} we will prove the following theorem.

\begin{theorem}\label{WTT for L1(G)n}	
 	Let $\{f^\alpha\mid \alpha\in \Lambda\}$ be a collections of functions in $L^1(G)_{n}$  such that the collection $\{\what{f^\alpha_H} \mid\alpha\in \Lambda\}$  and $\{\what{f^\alpha_B} \mid\alpha\in \Lambda\}$  have  no common zero in $S_1$ and $\Gamma_n$ respectively . Moreover, if  $\inf\limits_{\alpha\in\Lambda, m \in \Z}\delta_\infty^{\pm}(\what{f^\alpha_H})_{m,n}=0 $, then the left $ L^1(G) $ module generated by  $\{f^\alpha\mid \alpha\in \Lambda\}$ is dense in $ L^1(G)_n. $
 \end{theorem}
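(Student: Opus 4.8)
The plan is to deduce Theorem~\ref{WTT for L1(G)n} from Theorem~\ref{WTT for L1(G)n,n} by stripping off the left $K$-types. Write $V$ for the closed left $L^1(G)$-submodule of $L^1(G)_n$ generated by $\{f^\alpha\}$, and $f^\alpha_{m,n}$ for the component of $f^\alpha$ of left type $m$ (nonzero only for $m\equiv n\pmod 2$, since $-I$ is central). First I would observe that $V$, being a closed left module, is stable under left convolution by the norm-one complex measures $\overline{e_m(k)}\,dk$ on $K$: one approximates such a measure $\mu_m$ by $\mu_m*\phi_j\in L^1(G)$, $(\phi_j)$ an approximate identity at $e$, and uses associativity together with $\|\mu_m\|=1$; hence $f^\alpha_{m,n}\in V$ for all $\alpha$ and all admissible $m$. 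I would also record the complementary reduction: it suffices to prove $L^1(G)_{n,n}\subseteq V$. Indeed $V\supseteq L^1(G)*L^1(G)_{n,n}$, and $\overline{L^1(G)*L^1(G)_{n,n}}=L^1(G)_n$, because $L^1(G)_{m,n}*L^1(G)_{n,n}$ is dense in $L^1(G)_{m,n}$ (use a right approximate identity of $L^1(G)_{n,n}$: $\phi*u_j^{(n,n)}=P^n_R(\phi*u_j)\to P^n_R\phi=\phi$, with $P^n_R$ the projection onto right type $n$) while by Fej\'er summation on $K$ the subspaces $L^1(G)_{m,n}$, $m\equiv n\bmod 2$, span a dense subspace of $L^1(G)_n$.

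The core step is to prove $L^1(G)_{n,n}\subseteq V$ using Theorem~\ref{WTT for L1(G)n,n}. For this I would use shift functions: for each $m\equiv n\pmod 2$, a function $\Psi_{n,m}\in L^1(G)_{n,m}$ whose transform $(\what{\Psi_{n,m}})_{n,m}$ is holomorphic on $S_1$, vanishes there only at the points forced by reducibility of the principal series, is nonzero on the members of $\Gamma_n$ seen by functions of type $(n,m)$, and satisfies $\lim_{t\to\infty}e^{-\frac{\pi}{2}t}\log\bigl|(\what{\Psi_{n,m}})_{n,m}(\pm it)\bigr|=0$; such $\Psi_{n,m}$ are provided by a standard construction — e.g. from the heat semigroup or resolvents of the Casimir, passed through the Jacobi‑function realisation of functions of type $(n,m)$, whose transform decays only like $e^{-ct^2}$ on $i\R$. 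Put $g^{\alpha,m}:=\Psi_{n,m}*f^\alpha_{m,n}$; then $g^{\alpha,m}\in V\cap L^1(G)_{n,n}$ and $(\what{g^{\alpha,m}_H})_{n,n}=(\what{\Psi_{n,m}})_{n,m}\cdot(\what{f^\alpha_H})_{m,n}$, a product of holomorphic functions on $S_1$ whose first factor can only vanish where $(\what{f^\alpha_H})_{m,n}$ also vanishes (the forced zeros being common to all functions of type $(m,n)$). Hence $\{g^{\alpha,m}\}_{\alpha,m}$ has no common zero in $S_1$ exactly because $\{(\what{f^\alpha_H})_{m,n}\}_{\alpha,m}$ has none, and likewise on $\Gamma_n$; and, since multiplying by $(\what{\Psi_{n,m}})_{n,m}$ adds to $e^{-\frac{\pi}{2}t}\log|\,\cdot\,|$ a term tending to $0$ and so leaves the $\limsup$ unchanged, $\inf_{\alpha,m}\delta_\infty^{\pm}(\what{g^{\alpha,m}_H})_{n,n}=\inf_{\alpha,m}\delta_\infty^{\pm}(\what{f^\alpha_H})_{m,n}=0$. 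Thus $\{g^{\alpha,m}\}$ satisfies the hypotheses of Theorem~\ref{WTT for L1(G)n,n}, so the left module it generates — which is contained in $V$, being built from the $g^{\alpha,m}\in V$ by left convolution — is dense in $L^1(G)_{n,n}$. Therefore $L^1(G)_{n,n}\subseteq V$, and with the first paragraph this gives $V=L^1(G)_n$.

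The genuine difficulty does not lie in this reduction, which is $K$-type bookkeeping resting on Theorem~\ref{WTT for L1(G)n,n}; the substantive analysis — the resolvent transform exploiting the ``not too rapid decay'' hypothesis $\inf\delta_\infty^\pm=0$ — is carried out in proving Theorem~\ref{WTT for L1(G)n,n} and is used here as a black box. Within the reduction the point needing the most care is producing the shift functions $\Psi_{n,m}$ with all required properties simultaneously: the (near‑)zero‑freeness on the whole closed strip $S_1$ and on the relevant part of $\Gamma_n$, together with the lower bound on $|(\what{\Psi_{n,m}})_{n,m}(\pm it)|$ that makes the decay condition $\delta_\infty^\pm=0$ descend to the $g^{\alpha,m}$ — a shift function decaying too fast on $i\R$ would destroy it — and keeping track of how $S_1$, the mock discrete series $D_\pm$ (which sit at the reducible point $\lambda=0$, $\sigma=\sigma^-$ of $S_1$) and $\Gamma_n$ interact with the left $K$-type decomposition.
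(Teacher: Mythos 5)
Your proposal follows the same route as the paper: isolate the left $K$-type components $f^\alpha_{m,n}$, convolve each with a ``shift function'' of type $(n,m)$ to land in $L^1(G)_{n,n}$, check that the new collection satisfies the hypotheses of Theorem~\ref{WTT for L1(G)n,n}, and close with the observation that the left $L^1(G)$-module generated by $L^1(G)_{n,n}$ is all of $L^1(G)_n$. Your opening reduction (that it suffices to show $L^1(G)_{n,n}\subseteq V$, via the two-sided denseness of $L^1(G)*L^1(G)_{n,n}$) is a correct and slightly more explicit rendering of the paper's closing remark.

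The substantive gap is in your handling of the zero set of the shift functions. You assert that $(\what{\Psi_{n,m}})_{n,m}$ can be arranged to vanish on $S_1$ only at ``forced'' points, and that those forced zeros are automatically shared by every transform of type $(m,n)$. Neither half of this is free. First, the type-$(n,m)$ symmetry $F(\lambda)=\varphi^{n,m}_\lambda F(-\lambda)$ already obliges any such transform to carry a polynomial factor (the even factor $e^{-\lambda^4}$ or $e^{-c\lambda^2}$ alone satisfies only the trivial $(n,n)$ symmetry), and this polynomial's zeros in $S_1$ have to be identified explicitly. Second, the forced zeros for type $(n,m)$ are a priori dictated by the zeros and poles of $\varphi^{n,m}_\lambda=P_{n,m}(\lambda)/P_{n,m}(-\lambda)$, while for type $(m,n)$ one uses $\varphi^{m,n}_\lambda=1/\varphi^{n,m}_\lambda$, which exchanges the roles of $\lambda$ and $-\lambda$; moreover there are forced zeros that the rational function $\varphi$ does not see at all, such as $\phi^{m,0}_{\sigma^+,1}\equiv 0$ for $m\neq 0$ (relevant when $n=0$), which is what the paper's Case~1 has to invoke. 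The paper deals with all of this by taking $\mathcal{G}_m(\lambda)=e^{-\lambda^4}Q_{n,m}(\lambda)$ with three distinct polynomial choices $Q_{n,m}=P_{n,m}$, $\lambda^2 P_{n,m}$, or $(1-\lambda^2)P_{n,m}$ depending on the sign of $mn$ and the parity of $n$, and then runs an explicit case analysis over $\lambda\in\{0,\pm 1\}$ showing that at each of these points some index $m$ yields $\mathcal{G}_m(\lambda)\what{f_m}_H(\lambda)\neq 0$. That case analysis, not the existence of a generic heat-type multiplier, is the actual content of the reduction; your proposal correctly locates the difficulty (``the point needing the most care is producing the shift functions'') but then treats the resolution as a black box. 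The $\delta_\infty^\pm$-preservation part of your argument is fine and matches the paper's use of the factor $e^{-\lambda^4}$.
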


 Theorem \ref{WTT for L1(G)n} will follow from the theorem below.
 \begin{theorem}\label{WTT for L1(G)n,n}
 Let $\{f^\alpha\mid \alpha\in \Lambda\}$ be a collection of functions in $L^1(G)_{n,n}$ and $I$ be the smallest closed ideal in $L^1(G)_{n,n}$ containing $\{f^\alpha\mid \alpha\in \Lambda\}$ such that the collection $\{(\what{f^\alpha_H})_{n,n} \mid\alpha\in \Lambda\}$  and $\{(\what{f^\alpha_B})_{n,n} \mid\alpha\in \Lambda\}$  have  no common zero in $S_1$ and $\Gamma_n$ respectively . Moreover, if  $\inf\limits_{\alpha\in\Lambda}\delta_\infty^{\pm}(\what{f^\alpha_H})_{n,n}=0$, then $I= L^1(G)_{n,n}.$
 \end{theorem}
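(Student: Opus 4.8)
My plan is to exploit that $A:=L^1(G)_{n,n}$ is a \emph{commutative} Banach $*$-algebra under convolution --- an anti-automorphism being induced by $g\mapsto wg^{-1}w^{-1}$ with $w=k_{\pi/2}$ --- so that the Fourier transform $f\mapsto\big((\what{f_H})_{n,n},(\what{f_B})_{n,n}\big)$ is precisely its Gelfand transform. I would first record the Gelfand picture: passing to the unitisation $A^{\#}=A\oplus\C\delta$, the non-trivial characters of $A$ are the evaluations $f\mapsto(\what{f_H})_{n,n}(\lambda)$ for $\lambda\in S_1$ (with $\lambda$ and $-\lambda$ yielding the same character) and $f\mapsto(\what{f_B})_{n,n}(k)$ for $k\in\Gamma_n$, the transform being injective on $A$ by the Plancherel theorem. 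Since $I=A$ is equivalent to the assertion that every $T\in A^{*}$ with $T|_{I}=0$ vanishes, the task is to prove exactly this. The two non-vanishing hypotheses say precisely that $I$ lies in no maximal ideal of $A$ sitting over a point of $S_1\cup\Gamma_n$; what remains --- and what $\inf_{\alpha}\delta_\infty^{\pm}(\what{f^\alpha_H})_{n,n}=0$ is there to handle --- is to exclude the maximal ideals ``at infinity'', over the two ends $\lambda\to\pm i\infty$ of the strip.

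\textbf{Step 1: strip off the discrete spectrum.} The set $\Gamma_n$ is finite and consists of parameters of integrable discrete series, each of which contributes a genuine idempotent $e_k\in A$ whose discrete transform is $1$ at $k$ and $0$ elsewhere on $\Gamma_n$ and whose principal transform vanishes. As $\{(\what{f^\alpha_B})_{n,n}\}$ has no common zero on $\Gamma_n$, for each $k$ one finds $\alpha$ with $(\what{f^\alpha_B})_{n,n}(k)\neq0$, whence $e_k=(\what{f^\alpha_B})_{n,n}(k)^{-1}\,f^\alpha*e_k\in I$; thus $e:=\sum_{k\in\Gamma_n}e_k\in I$, and the splitting $A=eA\oplus A_0$ into closed ideals, where $A_0=\{f\in A:(\what{f_B})_{n,n}\equiv0\}$, reduces $I=A$ to $I_0=A_0$ for $I_0:=(\delta-e)*I\subseteq A_0$. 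On $A_0$ the map $f\mapsto(\what{f_H})_{n,n}$ is an injective homomorphism into the algebra of functions holomorphic on $\{|\Re\lambda|<1\}$, continuous on $S_1$, and symmetric under $\lambda\mapsto-\lambda$, and the generators of $I_0$ have transforms $(\what{f^\alpha_H})_{n,n}$ with no common zero on $S_1$ and $\inf_\alpha\delta_\infty^{\pm}(\what{f^\alpha_H})_{n,n}=0$. (The limit-of-discrete-series constituents $D_{\pm}$ of $\pi_{\sigma^-,0}$ admit no $L^1$-idempotent, but they sit at the endpoint $\lambda=0$ of the $\sigma^-$-strip and are covered by the hypothesis on $S_1$.)

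\textbf{Step 2: the resolvent transform.} Given $T\in A_0^{*}$ with $T|_{I_0}=0$, I would build a resolvent transform. Using a Casimir-type element of $A_0^{\#}$, whose transform is (up to normalisation) $\lambda^{2}$ and whose spectrum is accordingly the ray $(-\infty,1]$, together with the $f^\alpha$, one obtains a holomorphic function $\mathcal R$ on $\C\setminus(-\infty,1]$ --- valued in $(A_0^{\#})^{*}$, equivalently a compatible family of scalar holomorphic functions $z\mapsto T\big(\mathcal R(z)*g\big)$, $g\in A_0$ --- encoding $T$. Local division in $A_0^{\#}$, legitimate because the $(\what{f^\alpha_H})_{n,n}$ have no common zero on all of $S_1$, would then show $\mathcal R$ continues holomorphically across every finite point of the ray, so that the only unresolved behaviour is as $z\to-\infty$, i.e. as the principal parameter tends to $\pm i\infty$; there the estimates of Step 3 bound $\mathcal R$ by a majorant whose growth rate is controlled by $\delta_\infty^{\pm}(\what{f^\alpha_H})_{n,n}$. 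Because $\inf_\alpha\delta_\infty^{\pm}=0$ this rate may be taken arbitrarily small, and a Phragm\'en--Lindel\"of / Levinson-type theorem in the slit plane then forces $\mathcal R$ to be bounded, hence constant; unwinding the construction the constant is $0$, so $T=0$, which gives $I_0=A_0$ and hence $I=A$.

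\textbf{Step 3: the estimates --- the hard part.} The bulk of the work, and where I expect the real obstacle, is the pair of estimates feeding Step 2: (i) sharp \emph{upper} bounds for the elementary spherical functions $\varphi_\lambda^{(n,n)}$ of type $(n,n)$ --- Gauss hypergeometric (Jacobi) functions with parameters shifted by $n$ --- uniform in the group variable and in $\lambda$ as $\Re\lambda\to\pm1$ and $|\Im\lambda|\to\infty$, together with the resulting norm bounds in $A_0^{\#}$ on the resolvent and on the locally inverting elements; and (ii) \emph{lower} bounds for $(\what{f^\alpha_H})_{n,n}$ along $\lambda=\pm it$, $t\to\infty$, quantified by $\delta_\infty^{\pm}$, so that the growth of $\mathcal R$ near infinity really is of the required non-quasianalytic type. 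For $n=0$ these reduce to the Legendre-type estimates of \cite{Ben,Ben-1}; for general $n$ the shifted Jacobi parameters make the uniform asymptotics markedly more delicate, and this is the genuinely new technical ingredient.
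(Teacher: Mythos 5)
Your overall strategy --- resolvent transform in the quotient algebra, Phragm\'en--Lindel\"of/log-log argument, and hypergeometric estimates --- matches the spirit of the paper's proof, but Step~1 contains a genuine error and Steps~2--3 are still at the level of a plan rather than a proof, so there is a real gap.

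\textbf{Step 1 is wrong as stated.} You propose to split off the discrete spectrum using integrable idempotents $e_k \in L^1(G)_{n,n}$ for each $k\in\Gamma_n$. But the relevant discrete series matrix coefficient is $\psi_k^{n,n}=\phi_{\sigma,|k|}^{n,n}$, and since $c^{n,n}(-|k|)=0$ for $k\in\Gamma_n$ one has $\psi_k^{n,n}(a_t)\asymp e^{-(|k|+1)t}$ as $t\to\infty$, so that $\int_0^\infty |\psi_k^{n,n}(a_t)|\Delta(t)\,dt \asymp \int_0^\infty e^{(1-|k|)t}\,dt$. This diverges exactly when $|k|=1$. Hence for $n$ even, $k=\pm 1\in\Gamma_n$ and the projection $e_{\pm1}$ you want does \emph{not} lie in $L^1(G)_{n,n}$; the decomposition $A=eA\oplus A_0$ fails. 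Your parenthetical remark confuses two things: the limits of discrete series do sit at $\lambda=0$ of the $\sigma^-$ strip, but for $f\in L^1(G)_{n,n}$ with $n$ even the principal transform $\what{f}_H$ runs over the $\sigma^+$ strip, so the hypothesis ``no common zero on $S_1$'' does not reach $k=\pm1$ --- the $k=\pm1$ point is covered only by the discrete-part hypothesis, and you no longer have an algebraic device to exploit it once the idempotent is gone. The paper avoids this entirely: it never splits off the discrete spectrum, but instead builds a single family $b_\lambda$ whose Fourier transform is simultaneously $\frac{1}{\lambda^2+\xi^2}$ on the principal series and $\frac{1}{\lambda^2-k^2}$ on $\Gamma_n$, so both parts of the spectrum are handled in one stroke inside $L^1_\delta(G)_{n,n}/I$.

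\textbf{Steps 2 and 3 are not yet a proof.} Your ``Casimir-type element of $A_0^{\#}$ whose transform is $\lambda^2$'' cannot be an element of the unitisation (its Gelfand transform would be unbounded); the paper instead uses a concrete element $b_{\lambda_0}$ to form $B_\lambda=(\delta-(\lambda_0^2-\lambda^2)b_{\lambda_0}+I)^{-1}*(b_{\lambda_0}+I)$, an even \emph{entire} $A/I$-valued function --- there is no slit along $(-\infty,1]$. More importantly, the proposal postpones the actual content to Step~3: the explicit $(n,n)$-Jacobi/hypergeometric construction of $b_\lambda$, the cancellation by the zeros of $c^{n,n}(-\lambda)$ and the excision of $\textbf{B}_1$, the $L^1$ bounds $\|b_\lambda\|_1\lesssim (1+|\lambda|)/(\Re\lambda-1)$, the use of Barker's discrete-series decay to evaluate $\what{b_\lambda}{}_B$ on $\Gamma_n$, the estimate of $\|T_\lambda f\|_1$, and the density of $\{b_\lambda : \Re\lambda>1,\ \lambda\notin\textbf{B}\}$ (needed to turn $\mathcal R[g]=0$ into $g=0$). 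You correctly identify these as the crux, but identifying them is not supplying them; without them the Phragm\'en--Lindel\"of step has nothing to bite on. In short, even if Step~1 were repaired, the write-up would remain a roadmap to the paper's argument rather than an alternative proof.
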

 Proof of the theorem above borrows heavily from the ideas and methods of \cite{Ben}, \cite{PS}  which uses the method of the resolvent transform.  In the following we give a sketch of our proof.
 \begin{enumerate}
\item[1.] We will begin by showing that for all $\lambda $ in $\C^+ = \{ \lambda \in \C \mid \Re \lambda > 0 \}$ except for a finite set $ \textbf{B}$  there is a family $b_{\lambda}$ such that $\what{b_{\lambda}}_H (i\xi) = \frac{1}{\lambda^2+ \xi^2}$ for all $\xi \in \R$ and $\what{b_{\lambda}}_B (k) = \frac{1}{\lambda^2 -k^2}$ for all $k \in \Gamma_n$. For $\Re \lambda >1$, $b_{\lambda} \in \L^1(G)_{n,n}$ and    $\{ b_{\lambda} \mid \Re \lambda >1 \text{ and } \lambda \notin \textbf{B} \}$ spans a dense subset of $L^1(G)_{n,n}$. We will show  $||b_{\lambda}||_1 \rightarrow 0$ if $\lambda \rightarrow \infty $ along the positive real axis.
\item[2.] By the Banach algebra theory (using the fact that principal part and discrete part of the Fourier transforms of the  elements of $I$ have no common zero), we define $\lambda\mapsto B_\lambda$   as a $L^1(G)_{n,n}/I$ valued even entire function.
 \item[3.] Let $g\in L^\infty(G)_{n,n}$ such that $g$ annihilates $I$. We define the resolvent transfrom $\mathcal R[g]$ by $$\mathcal R[g](\lambda)=\langle B_\lambda, g\rangle, $$ Considering $g$ as a bounded linear functional on $L^1(G)_{n,n}/I$, we write $$\mathcal R[g](\lambda)=\langle B_\lambda, g\rangle, $$ where $B_\lambda=b_\lambda + I\in L^1(G)_{n,n}/I$, for all $\lambda$ with $\Re\lambda>1$ and $\lambda \notin \textbf{B}$.
 \item[4.] We need an explicit formula of the function $\mathcal{R}[g](\lambda )$.  For this we find a representative  $T_\lambda f$ in $L^1(G)_{n,n}$ of the cosets $B_\lambda $  for $0 <\Re \lambda < 1$  where $f \in I$  such that $(\what{f}_H(\sigma,\lambda))_{n,n} \neq 0$. 
 \item[5.] By the estimates of $||b_\lambda||_1$, $||T_\lambda f||_1$ and using a continuity argument we get the necessary estimate of $\mathcal{R}[g](\lambda)$. Then using a log-log type theorem \cite[Theorem 6.3]{PS} we show $\mathcal{R}[g] = 0 .$ 
 \item[6.] By denseness  of  $\{ b_\lambda \mid \Re \lambda >1 \text{ and } \lambda \notin \textbf{B} \}$ , it follows that $g =0$.
 \end{enumerate}
 
 Let $\Omega$ be the Casimir element of $G$. In \cite{PS} the solutions $\phi_{\sigma,\lambda}^{0,0}$ and $\Phi_{\sigma,\lambda}^{0,0}$ of 
 \begin{equation}
\Omega f = \left( \frac{\lambda ^2 -1}{4}\right)f \label{Omega equation}
 \end{equation} played a crucial role and they are given in terms of hypergeometric functions in  \cite{Erdelyi-1}.
  From \cite[p.31 ]{Special Fns} observing a formula of $\phi_{\sigma,\lambda}^{n,n}$ we found a way to derive the second solution of \eqref{Omega equation} in terms of hypergeometric functions. We are also able to write $\phi_{\sigma,\lambda}^{n,n}$ as a linear combination of $\Phi_{\sigma,\lambda}^{n,n}$ and $\Phi_{\sigma,-\lambda}^{n,n}$ that is,
 \begin{equation}
  {\phi_{\sigma,\lambda}^{n,n}} = c^{n,n}_\sigma (\lambda) \Phi_{\sigma,\lambda}^{n,n} + c^{n,n}_\sigma (-\lambda){\Phi_{-\lambda}^{n,n}}.
  \end{equation} It is an essential tool to find the Fourier transforms of $b_\lambda$'s.
 
 As in \cite[Lemma 8.1]{PS}, using asymptotic behaviour $\Phi_{\sigma,\lambda}^{n,n}(a_t)$ for $\lambda \in \C$ near $t=\infty$  we show,
 \begin{equation}
 \lim_{t\rightarrow\infty}\dfrac{\dfrac{\phi_{\sigma,i\xi}^{n,n}}{\Phi_{\sigma,\lambda}^{n,n}}(a_t)}{e^{2\lambda t}}=0. \label{limit of phi xi by Phi lambda}
 \end{equation} This directly gives $\what{b_{\lambda}}_H(i\xi)= \frac{1}{\lambda^2 +\xi^2}$, for all $\xi \in \R$. But for a general $n $, we also need to find  $\what{b_{\lambda }}_B(k)$  for all $k \in \Gamma_n$ and  for that we need to show 
 \begin{equation}
 \lim_{t\rightarrow\infty}\dfrac{\dfrac{\phi_{\sigma,|k|}^{n,n}}{\Phi_{\sigma,\lambda}^{n,n}}(a_t)}{e^{2\lambda t}}=0. \label{limit of phi k by Phi lambda} 
 \end{equation}
 Here asymptotic behaviour of  $\Phi_{\sigma,\lambda}^{n,n}$ is not enough.
 We have to use the full potential of decay of discrete series matrix coefficient $\phi_{\sigma,|k|}^{n,n}$. From \cite[Theorem 8.1]{Barker} we get $\phi_{\sigma,|k|}^{n,n}$ has sufficient decay as $t \rightarrow \infty$. Using this we proved \eqref{limit of phi k by Phi lambda} and consequently $\what{b_{\lambda B}}(k) =\frac{1}{\lambda^2 -k^2}$  for all $k \in \Gamma_n$.
 
By inverse Fourier transform we find the representative of $B_\lambda$. For all but finitely many $\lambda$ with $\Re \lambda > 1$ we show $B_\lambda =  b_\lambda +I $. In general there are some zeros of $c^{n,n}_\sigma(-\lambda)$ in $\C^+$ for $n\in \Z$. For this reason we have to remove a neighbourhood $\textbf{B}_1$ to find estimate  of  $||b_\lambda||_1$   on $|\Re \lambda | >1$. Using estimate of $||b_\lambda||_1$ we find the estimate  of $\mathcal{R}[g](\lambda)$ on $\{ \lambda \in \C \mid |\Re \lambda | >1 \} \setm \textbf{B}_1$. We use continuity to get estimate of $\mathcal{R}[g](\lambda)$ on $\textbf{B}_1$. Similarly using the estimate of $||T_\lambda f||_1$ we find the estimate of  $\mathcal{R}[g](\lambda)$ on $0 < \Re \lambda <1$ and cosequently we find the necessary estimate of $\mathcal{R}[g](\lambda)$. Finally using a log-log type theorem it will follow that   $\mathcal{R}[g] = 0$.
 
  Next we extend W-T theorem for $L^1(G)_{n,n}$ to $L^1(G)_n$. 
  From the given collection $\lbrace f^\alpha \mid \alpha \in \Lambda \rbrace \subset L^1(G)_{n} $ and using the isomorphism between $L^1$-Schwartz space and its image under Fourier transform  we construct a new collection of $L^1(G)_{n,n}$ functions $\lbrace  g_m * f^\alpha \mid m \in \Z^\sigma \rbrace$. We show the new collection satisfy the hypothesis of Theorem \ref{WTT for L1(G)n,n}. Therefore the collection $\lbrace  g_m * f^\alpha \mid m \in \Z^\sigma \rbrace$ is dense in $L^1(G)_{n,n}$ and consequently Theorem \ref{WTT for L1(G)n} will follow. Following similar idea as above we prove W-T theorem for $L^1(G)$.\\

\section{\textbf{Preliminaries}}
 In this article most of our notations are standard can be found in  \cite{Barker}, \cite{Special Fns}, \cite{PS}  and \cite{Sarkar-1997}. We will denote $C$ as constant and its value can change from one line to another. For any two positive expressions $f_1$ and $f_2$, $f_1 \asymp f_2 $ stands for that there are positive constants $C_1$, $C_2$ such that $C_1 f_1 \leq f_2 \leq C_2 f_1 $. For $ z \in \C$ we will use $\Re z$ and $\Im z$ to denote real and imaginary parts of $z$ respectively. 
 
 For $k \in \Z^*$ and $\sigma \in \what M$ be determined by $k \in \Z^{-\sigma}$ and we define, \begin{equation}
 \Z(k) = \begin{cases}\{ m \in \Z^{\sigma} : m \geq k+1 \} \text{ if }  k\geq 1 \\
 \{ m \in \Z^{\sigma} : m \leq k-1 \} \text { if }  k\leq -1.
 \end{cases}
 \end{equation}
 
 The  Iwasawa decomposition for G gives a gives a diffeomorphism of $K \times A \times N$ onto $G$ where $A=\{ a_t \mid t\in \R \}$ and $N=\{ n_\xi \mid \xi \in \R \}$.
  That is, by Iwasawa decomposition  $x \in G$ can be uniquely written as $x =k_\theta  a_t n_\xi $ and using this we define $K(x) = k_\theta$ and $H(x) = t$. Let $A^+=\{ a_t= \mid t>0 \}$. The Cartan decomposition for $G$  gives $G={K \overline{A_+}K}$. Let $dg$, $dn$, $dk$ and $dm$ be the Haar meausres of $G$, $N$, $K$ and $M$  respectively where $\int_K dk=1$ and $\int_M dm = 1$.  We have the following integral formulae corresponding to the Cartan decomposition,
which holds for any integrable function:
\begin{equation}
 \int_G f(x) dx = \int_K \int_{\R^+} \int_K f(k_1 a_t k_2) \Delta(t) dk_1 dt dk_2,
\end{equation}
where $\Delta(t)= 2 \sinh 2t$. \\


For all $\lambda \in \C $  let us define,
$${\phi_{\sigma,\lambda}^{n,n}}(x) =\int_K e^{(\lambda-1)H(xk)} e_n(k^{-1}) \overline{e_n(K(xk)^{-1})} dk, \quad \text{ for all } x \in G .$$
Then we have for all $\lambda \in \C$, $ {\phi_{\sigma,\lambda}^{n,n}}$ is a smooth eigenfunctions of the Casimir element  $\Omega $ that is,
$$ \Omega {\phi_{\sigma,\lambda}^{n,n}} = \frac{{\lambda}^2-1}{4}  {\phi_{\sigma,\lambda}^{n,n}}.$$
Let $\Pi_{n,n}(\Omega)$ be the differential operator on $A\setminus \lbrace1\rbrace$ defined by 

$$\Pi_{n,n}(\Omega) f = \frac{d^2 }{dt^2} f(a_t) + 2 \coth 2t \frac{d}{dt}   f(a_t) + \frac{1}{4}  \frac{n^2}{\cosh^2 t} f(a_t), t>0.$$ 
Then from  \cite[p.62, eqn. (13.2)]{Barker}) we get that ${\phi_{\sigma,\lambda}^{n,n}}$ is a solution of the following equation,
\begin{equation} \label{Equation of eigen function of Pi {n,n}}
 \Pi_{n,n}(\Omega) f = (\lambda^2-1)  f .
\end{equation}

We also have the following properties of ${\phi_{\sigma,\lambda}^{n,n}}$:
 \begin{enumerate}
\item ${\phi_{\sigma,\lambda}^{n,n}}$ is a $(n,n)$ type function. 
\item ${\phi_{\sigma,\lambda}^{n,n}} = {\phi_{\sigma,-\lambda}^{n,n}}$, ${\phi_{\sigma,\lambda}^{n,n}}(a_t)={\phi_{\sigma,\lambda}^{n,n}}(a_{-t)}$.
\item For any fixed $x \in G$, $\lambda \mapsto {\phi_{\sigma,\lambda}^{n,n}}(x)$ is an entire function.
\item $|{\phi_{\sigma,\lambda}^{n,n}}(x)| \leq 1$ $x \in G$ if $\lambda \in S_1.$ 
\end{enumerate}

For $f \in  L^1(G)_{n,n}$ the principal and discrete parts of the Fourier transform are defined by,
 \begin{align}
    \what{f_H}(\sigma,\lambda)_{n,n}&= \int_G f(x) {\phi_{\sigma,\lambda}^{n,n}}(x^{-1}) dx \quad \text{ for all } \lambda \in S_1 ,\\ 
   \what{f_B}(k)_{n,n} &= \int_G f(x) { \psi_{k}^{n,n}}(x^{-1}) dx \quad \text{ for all } k \in \Gamma_n . 
 \end{align}
It follows from Riemann-Lebesgue lemma that if $f \in L^1(G)_{n,n}$ then $ |\what{f_H}(\sigma,\lambda)_{n,n}| \rightarrow  0$
as $|\Im \lambda| \rightarrow \infty  $ in $S_1$. We also have from \cite[p.30 propn 7.3]{Barker} 
\begin{equation}
{\psi_{k}^{n,n}} ={\phi_{\sigma,|k|}^{n,n}} \text{ for all $k \in \Gamma_n$ }.
\end{equation}
We denote $\mathcal{C}^1(G)$ the $L^1$- Schwartz space of $G$ . Suppose $\sigma \in \what M, m,n \in \Z^\sigma$ then the space $\mathcal{C}^1_H(\what G)_{m,n}$ denotes the collection of functions $F : S_1 \rightarrow \C$ such that
\begin{itemize}
\item[(1)] $F$ is continuous on $S_1$ and homomorphic on Int $S_1$,
\item[(2)] $F(\lambda) = \varphi^{m,n}_\lambda F(-\lambda)$ for all $\lambda \in S_1$, where  \begin{equation}\label{peref definition of Pmn lambda}
\varphi^{m,n}_\lambda  = P_{m,n}(\lambda)/P_{m,n}(-\lambda)
 \end{equation} is the rational function defined in \cite[Prop. 7.1]{Barker}, 
\item[(3)] $\what{\rho}_{H,l,r} (F) < \infty$ for all $ l \in \N$, $r\in \R^+,$ where $$\what{\rho}_{H,l,r} (F) = \sup_{\lambda \in S_1 } \left| \left( \dfrac{d}{d\lambda} \right)^l F(\lambda) \right| (1+|\lambda|)^r,$$ 
\item[(4)] $F(k) = 0 $ if $nm < 0 $, $k \in \Z^{-\sigma}$ and $|k| \leq \min \{ |m|,|n|,1 \}.$ 
\end{itemize}

\noindent We note that for the particular case $m = n $, $P_{n,n}=1$  so the property 2 in the definition of  $\mathcal{C}^1_H(\what G)_{n,n}$ reduces to $F(\lambda) = F(-\lambda) $ and property 4 becomes irrelevant. Let,
\begin{align*}
\Z^1_{m,n} =\begin{cases} \lbrace
 k \mid 1 <k < \min \{ m,n \} \text { and } k \in \Z^{-\sigma} \rbrace \text{ if } mn >0 , m > 0 \\
\lbrace k \mid \max \{ m,n \}  <k < 0\text { and } k \in \Z^{-\sigma} \rbrace \text{ if } mn >0 ,m < 0\\
 \phi  \text{ if } mn \leq 0\end{cases} 
\end{align*}
 and $\mathcal{C}^1_B(\what G)_{m,n}$ is the set of all functions $F: \Z^1_{m,n} \rightarrow \C $. Then from \cite[Theorem 18.2]{Barker} we have the following result:
\begin{lemma}\label{Isomorphism of schwarz space}
The Fourier transform, $f \mapsto (\what {f}_H, \what f _B)$ is a topological isomorphism between $\mathcal{C}^1(G)_{m,n} $ and $\mathcal{C}^1(\what G)_{m,n} = \mathcal{C}^1_H (\what G)_{m,n} \times \mathcal{C}^1_B(\what G)_{m,n}.$ Moreover, the restriction of that isomorphism  gives,
\begin{itemize}
\item[(i)] $\mathcal {C}^1_H(G)_{m,n}$ ismorphic to $\mathcal {C}^1_H(\what G)_{m,n}$, 
\item[(ii)] $\mathcal {C} ^1_B(G)_{m,n}$ ismorphic to $\mathcal{C}^1_B(\what G)_{m,n}$. \\
\end{itemize}
\end{lemma}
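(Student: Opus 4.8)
The plan is to strip the bi-$K$-type structure down to the radial direction, recognise the Fourier transform there as a one-variable integral transform of hypergeometric type, and then run the Harish-Chandra, Trombi--Varadarajan and Eguchi circle of ideas for the $L^1$-Schwartz space in this explicit rank-one situation; the statement itself is \cite[Theorem 18.2]{Barker}, so I only outline the architecture of a proof. A function $f\in\mathcal{C}^1(G)_{m,n}$ is determined by its radial part $t\mapsto f(a_t)$, $t>0$, and the Cartan integral formula turns the transform into
\[
\what{f_H}(\sigma,\lambda)_{m,n}=2\int_0^\infty f(a_t)\,{\phi_{\sigma,\lambda}^{m,n}}(a_{-t})\,\sinh 2t\,dt,\qquad \what{f_B}(k)_{m,n}=\what{f_H}(\sigma,|k|)_{m,n}\ \ (k\in\Gamma_n),
\]
using ${\psi_{k}^{n,n}}={\phi_{\sigma,|k|}^{n,n}}$. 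Thus $(\what{f_H},\what{f_B})$ is a Jacobi-type integral transform against the hypergeometric kernel ${\phi_{\sigma,\lambda}^{m,n}}$ together with its evaluation at the finitely many points of $\Gamma_n$, and membership $f\in\mathcal{C}^1(G)_{m,n}$ says precisely that $t\mapsto f(a_t)$ and all its derivatives decay against the Harish-Chandra spherical function $\Xi(a_t)$ faster than any polynomial in $t$.

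For the inclusion $\mathcal{C}^1(G)_{m,n}\hookrightarrow\mathcal{C}^1(\what G)_{m,n}$ I would verify the defining properties of $\mathcal{C}^1_H(\what G)_{m,n}$ in turn. Holomorphy of $\what{f_H}$ on $\mathrm{Int}\,S_1$ and its continuity on $S_1$ follow from the holomorphy of $\lambda\mapsto{\phi_{\sigma,\lambda}^{m,n}}(x)$ and dominated convergence, using the bound $|{\phi_{\sigma,\lambda}^{m,n}}(x)|\le1$ on $S_1$ and the decay of $f$. The seminorm estimates $\what\rho_{H,l,r}(\what{f_H})<\infty$ come from differentiating under the integral sign: every $\lambda$-derivative $(\partial/\partial\lambda)^l{\phi_{\sigma,\lambda}^{m,n}}(a_{-t})$ is bounded, uniformly for $\lambda\in S_1$, by a polynomial in $t$ which the rapid decay of $f$ absorbs, while the polynomial gain in $|\lambda|$ is produced by repeatedly integrating by parts in $t$ against the radial equation $\Pi_{n,n}(\Omega){\phi_{\sigma,\lambda}^{m,n}}=(\lambda^2-1){\phi_{\sigma,\lambda}^{m,n}}$, each pass converting a factor $\lambda^2-1$ into two $t$-derivatives landing on $f$. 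The functional equation $\what{f_H}(\lambda)=\varphi^{m,n}_\lambda\,\what{f_H}(-\lambda)$ is inherited from the transformation of ${\phi_{\sigma,\lambda}^{m,n}}$ under $\lambda\mapsto-\lambda$ --- which for $m=n$ is just the symmetry ${\phi_{\sigma,\lambda}^{n,n}}={\phi_{\sigma,-\lambda}^{n,n}}$, i.e.\ $F(\lambda)=F(-\lambda)$ --- and the vanishing of $\what{f_H}$ at small $|k|$ when $mn<0$ records the absence of the corresponding discrete series from the relevant Harish-Chandra module.

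The substantive half is surjectivity with a continuous inverse. Given $(F_H,F_B)\in\mathcal{C}^1(\what G)_{m,n}$ I would define $f$ by the inversion formula --- a principal-series contour integral $f(a_t)=C\int_{i\R}F_H(\lambda)\,{\phi_{\sigma,\lambda}^{m,n}}(a_t)\,|\mathbf{c}_\sigma(\lambda)|^{-2}\,d\lambda$ plus a finite sum over $\Gamma_n$ of discrete-series matrix coefficients with coefficients built from $F_B$ --- and then show $f\in\mathcal{C}^1(G)_{m,n}$. The engine is a contour shift from $i\R$ towards the walls $\Re\lambda=\pm1$: the shift is legitimate because $F_H$ extends holomorphically with rapid decay and ${\phi_{\sigma,\lambda}^{m,n}}(a_t)$ obeys good uniform-in-strip bounds, it yields the sharp estimate $|f(a_t)|\le C\,\Xi(a_t)(1+t)^{-N}$ (and similarly for derivatives), and the residues picked up at the poles of the Plancherel density $|\mathbf{c}_\sigma(\lambda)|^{-2}$ are exactly the prescribed discrete part $F_B$ --- which is why the vanishing condition (4) of $\mathcal{C}^1_H(\what G)_{m,n}$ is precisely the right hypothesis. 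Running the same contour argument on the $t$- and $\lambda$-differentiated integrands delivers every Schwartz seminorm of $f$ and hence continuity of the inverse; injectivity of the Fourier transform then follows from the inversion formula, and since $\mathcal{C}^1(G)_{m,n}$ and $\mathcal{C}^1(\what G)_{m,n}$ are Fr\'echet spaces, a continuous bijection between them is automatically a topological isomorphism. Parts (i) and (ii) then follow by restricting the isomorphism to the subspaces $\mathcal{C}^1_H(G)_{m,n}$ and $\mathcal{C}^1_B(G)_{m,n}$ mapping onto the two factors of the target.

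I expect the real obstacle to be the uniform two-variable asymptotics of ${\phi_{\sigma,\lambda}^{m,n}}(a_t)$ and of its second, linearly independent solution $\Phi^{m,n}_{\sigma,\lambda}(a_t)$ --- simultaneously for $t\in(0,\infty)$ and for $\lambda$ in a full neighbourhood of the strip $S_1$, and most delicately on the walls $\Re\lambda=\pm1$ where $\mathbf{c}_\sigma$ degenerates --- since these estimates control both the legitimacy of the contour shift and the precise identification of the residual terms with the discrete spectrum. This is exactly where the hypergeometric representations of \cite{Erdelyi-1} and \cite{Special Fns}, the expansion of ${\phi_{\sigma,\lambda}^{m,n}}$ into the Harish-Chandra solutions $\Phi^{m,n}_{\sigma,\pm\lambda}$ (the $m=n$ case of which is displayed in the introduction), and the discrete-series decay estimate \cite[Theorem 8.1]{Barker} enter decisively, and it is the reason the result is ultimately quoted as \cite[Theorem 18.2]{Barker} rather than reproved in full here.
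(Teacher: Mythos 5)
The paper does not prove this lemma at all: it is imported verbatim as Theorem~18.2 of Barker's memoir, and you correctly recognize and flag this. Your outline of the standard Trombi--Varadarajan/Eguchi/Barker methodology (Cartan reduction to a one-variable hypergeometric transform; seminorm bounds via differentiation under the integral and integration by parts against the radial Casimir equation; surjectivity via the inversion formula and contour shift to the wall of $S_1$; Fr\'echet open-mapping to upgrade a continuous bijection to an isomorphism) is the right architecture, and the identification of the uniform-in-$(\lambda,t)$ asymptotics of $\phi_{\sigma,\lambda}^{m,n}$ and $\Phi_{\sigma,\lambda}^{m,n}$ near the wall as the technical crux is accurate.

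One imprecision worth noting: you describe the discrete part $F_B$ as arising from residues of the Plancherel density $|\mathbf c_\sigma(\lambda)|^{-2}$ picked up during the contour shift. For $\mathrm{SL}(2,\R)$ in a fixed $(m,n)$-type this is not quite the mechanism. The points of $\Gamma_n$ sit in the interior of the strip, where $|\mathbf c_\sigma(\lambda)|^{-2}$ \emph{vanishes} rather than blows up, and the coupling between the discrete and principal parts is instead the identity $\psi_k^{m,n}=\eta^{m,n}(k)\,\phi_{\sigma,|k|}^{m,n}$ (the relation the paper invokes in \S 7): the discrete-series matrix coefficient is, up to a constant, the Eisenstein integral evaluated at an integer parameter, which is how Barker's Theorem~18.2 ties $\what f_B(k)$ to the boundary/interior values of $\what f_H$ and why condition (4) of $\mathcal C^1_H(\what G)_{m,n}$ takes the form of a vanishing condition at those integers when $mn<0$. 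Since your write-up is explicitly a sketch deferring to Barker, this does not sink the argument, but a careful proof would have to replace the ``residue of the $\mathbf c$-function'' picture with this evaluation-at-integers picture, and it is precisely what makes the $(m,n)\neq(n,n)$ case more delicate than the spherical one.
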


\noindent{\bf Hypergeometric function:} We are going to use the following properties of hypergeometric function,
\begin{enumerate}
\item[(a)] The hypergeometric function has the following integral representation for $\Re c>\Re b>0$,
\begin{eqnarray}\label{integral representation of hypergeometric function}
_2F_1(a,b;c;z)=\frac{\Gamma(c)}{\Gamma(b)\Gamma(c-b)}\int_0^1 s^{b-1}(1-s)^{c-b-1}(1-sz)^{-a}ds,\hspace{3mm}|z|<1.
\end{eqnarray}
(see \cite[p, 239, eqn 9.1.4]{Lebedev})
\item [(b)] \begin{eqnarray}\label{properties of hypergeometric function-2}
\nonumber c(c+1){}_2F_1(a,b;c;z)&=& c(c-a+1)_2F_1(a,b+1;c+2;z)\\&&+
a\left[c-(c-b)z\right]{}
 _2F_1(a+1,b+1;c+2;z), \hspace{3mm} z\in\mathbb{C}\setminus[1,\infty). 
\end{eqnarray}
(see \cite[p. 240, eqn. (9.1.7)]{Lebedev})

\item[(c)] \begin{eqnarray}\label{properties of hypergermetric function-3}
\nonumber \int_0^1 x^{d-1}(1-x)^{b-d-1}{}_2F_1(a,b;c;x)dx &=&\frac{\Gamma(c)\Gamma(d)\Gamma(b-d)\Gamma(c-a-d)}{\Gamma(b)\Gamma(c-a)\Gamma(c-d)},\\
&&\textup{if}\hspace{1mm}\Re d>0, \Re (b-d)>0,\Re(c-a-d)>0.
\end{eqnarray}
(see \cite[p. 813, 7.512 (3)]{Grad})
\end{enumerate}

\section*{\textbf{The functions $b_\lambda$ : Representatives of $B_\lambda, \Re\lambda>1$}}
In this section first we find the expression of $\phi_{\sigma,\lambda}^{n,n}$ and the second solutions of \eqref{Equation of eigen function of Pi {n,n}} in terms of hypergeometric functions. We substitute 
\begin{equation*}
 f(t)=g(t) \cosh^n t,
\end{equation*}in the equation \eqref{Equation of eigen function of Pi {n,n}}. Then we get the following ODE,
$$\frac{d^2g }{dt^2} + ((2n+1)\tanh t + \coth t ) \frac{dg}{dt}   + ({(n+1)}^2- {\lambda}^2)g = 0, \, t>0 .$$
By the change of variable $z:= -\sinh^2 t $  the equation reduces to the following hypergeometric differential equation 
\begin{equation}
z(1-z) \frac{d^2g}{dz^2}  + (c-(1+a+b)z) \frac{dg}{dz} - \frac{1}{4} ab g=0 \label{Hypergeometric equation reduced},
\end{equation}
with $a=\frac{n+1}{2}+ \frac{\lambda}{2}, b=\frac{n+1}{2}- \frac{\lambda}{2}, c=1 $. Therefore 
$$g_1(t) =  {}_2F_1 \left(\frac{n+1}{2}+ \frac{\lambda}{2},\frac{n+1}{2}- \frac{\lambda}{2};1;-\sinh^2 t\right) ,$$
is a solution of \eqref{Hypergeometric equation reduced} which is regular at origin, so by the uniqueness of regular solution, $$\phi_{\sigma,\lambda}^{n,n}(a_t)=(\cosh t)^n \, {}_2 F_1 \left(\frac{n+1}{2}+ \frac{\lambda}{2},\frac{n+1}{2}- \frac{\lambda}{2};1;-\sinh^2t\right), t>0.$$    Also from \cite[p.105, 2.9 (11)]{Erdelyi-1} second solutions of \eqref{Hypergeometric equation reduced}  on $(0, \infty)$ are,
$$g_2(t)= (\cosh t)^{-(1+\lambda + n)} \, {}_2F_1\left( \frac{1+\lambda}{2}+ \frac{n}{2}, \frac{1+\lambda}{2}- \frac{n}{2} ; 1+ \lambda; \cosh^{-2}t \right), $$
$$g_3(t)= (\cosh t)^{-(1-\lambda + n)}  \, {}_2F_1\left( \frac{1-\lambda}{2}+ \frac{n}{2}, \frac{1-\lambda}{2}- \frac{n}{2} ; 1- \lambda; \cosh^{-2}t \right) .$$
We now define,\begin{equation}
{\Phi_{\sigma,\lambda}^{n,n}(a_t)} = (2\cosh t)^{-(1+\lambda )} \, {}_2F_1\left( \frac{1+\lambda}{2}+ \frac{|n|}{2}, \frac{1+\lambda}{2}- \frac{|n|}{2} ; 1+ \lambda; \cosh^{-2}t \right).
 \end{equation}
Then ${\Phi_{\sigma,\lambda}^{n,n}}$ and ${\Phi_{\sigma,-\lambda}^{n,n}}$ both are  solutions of \eqref{Equation of eigen function of Pi {n,n}}, both has singularity at $t=0$ and they are linearly independent.  For $\lambda \in \C \setminus \Z,$  from \cite[p.110, 2.10(2,3 and 5)]{Erdelyi-1} we have,
  \begin{equation}
  {\phi_{\sigma,\lambda}^{n,n}} = c^{n,n}_\sigma (\lambda) \Phi_{\sigma,\lambda}^{n,n} + c^{n,n}_\sigma(-\lambda){\Phi_{\sigma,-\lambda}^{n,n}},
  \end{equation}

\noindent where  $\sigma$ is determined by $n \in \Z^\sigma$ and
 \begin{equation}
 c^{n,n}_\sigma (\lambda)=\dfrac{2^{1+\lambda} \Gamma(-\lambda)	}{\Gamma(\frac{1-\lambda-|n|}{2})\Gamma(\frac{1-\lambda+|n|}{2})}\,.\label{definition of cnn (lambda)}
 \end{equation}
  We have for $t \rightarrow \infty$,
\begin{equation}
{\Phi_{\sigma,\lambda}^{n,n}(a_t)}= e^{-(\lambda+1)t}(1+O(1)).\\
\end{equation}

\noindent Hence for $\Re \lambda <0$ and as $t \rightarrow \infty,$
\begin{equation}\label{estimate of phi lambda}
{\phi_{\sigma,\lambda}^{n,n}}(a_t) =c^{n,n}_\sigma(\lambda)e^{-(\lambda+1)t}(1+O(1)).
\end{equation}
 For simplicity if $f \in L^1(G)_{n,n}$ then we denote the principal and discrete parts of the Fourier transform by $\what{f_H}  $ and $\what{f_B} $ respectively. 
 Since $n \in \Z^\sigma$ determines $\sigma$ we will use $c^{n,n} (\lambda)$ instead of $c^{n,n}_\sigma(\lambda)$.
 
Let ${\C}_+= \lbrace z \in \C \mid \Re(z) > 0 \rbrace$ be the open right half plane and $\textbf{B} = \{ \lambda \in \Z^{-\sigma} : 0< \lambda < n \}$ are the zeros of $c^{n,n}(-\lambda)$ in the open right half plane.

\noindent  For $\lambda \in \C_+ \setminus \textbf{B} $, we define
\begin{equation}\label{definition of b lambda}
b_\lambda (a_t) := \dfrac{1}{2 \lambda c^{n,n}(-\lambda)} {\Phi_{\sigma,\lambda}^{n,n}}(a_t), \quad \textup {for }   t> 0.
\end{equation} 
 \noindent Then for $\xi \in \R $ and $\xi > n+1$, $b_\xi (a_t)$ is positive. Now by Cartan decomposition we extend $b_\lambda $ as a $(n,n)$ type function on $G \setminus K $ and so  $b_\lambda $ is a solution of \eqref{Omega equation} on  $G \setminus K $.   \\
 Later on we shall need estimates of $b_\lambda(a_t)$ near $t= 0 $ and $t=\infty$. For this purpose, we need the following lemma.
 \begin{lemma}\label{lemma- estimates of b lambda} Let  $\textbf{B}_1 = \bigcup_{i=0}^{k_0 - 1} B(|n|-2i-1 ; 1) $ where $B(z ; 1)$ denotes a Euclidean ball of radius $1$ centered at $z$ and $k_0 = [\frac{|n|}{2}]+1$. Then for $\lambda \in \C_+ \setminus \textbf{B}_1 ,$ we have 
 
 \begin{enumerate}
\item[(i)] There is a positive constant $C$ independent of $\lambda$ such that for all $t\in(0,1/2]$,
$$|b_\lambda(a_t)|\leq C  \log \frac{1}{t}.$$
 \item[(ii)] There is a positive constant $C$ independent of $\lambda$  such that for all $t\in [1/2,\infty)$,
$$|b_\lambda(a_t)|\leq C e^{-(\Re\lambda+1)t}.$$
\end{enumerate} 
\end{lemma}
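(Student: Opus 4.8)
The plan is to estimate the hypergeometric function appearing in the definition \eqref{definition of b lambda} of $b_\lambda(a_t) = \frac{1}{2\lambda c^{n,n}(-\lambda)}\Phi_{\sigma,\lambda}^{n,n}(a_t)$, separately in the two regimes $t\in(0,1/2]$ and $t\in[1/2,\infty)$, while keeping careful track of the dependence on $\lambda$ through the prefactor $\frac{1}{2\lambda c^{n,n}(-\lambda)}$. The deletion of the balls $\textbf{B}_1 = \bigcup_{i=0}^{k_0-1} B(|n|-2i-1;1)$ is exactly designed to keep $\frac{1}{\lambda c^{n,n}(-\lambda)}$ bounded: from \eqref{definition of cnn (lambda)},
\[
\frac{1}{2\lambda c^{n,n}(-\lambda)} = \frac{\Gamma\!\big(\tfrac{1+\lambda-|n|}{2}\big)\Gamma\!\big(\tfrac{1+\lambda+|n|}{2}\big)}{2^{2-\lambda}\,\lambda\,\Gamma(\lambda)},
\]
and the only poles of the right-hand side in $\C_+$ come from the zeros of $c^{n,n}(-\lambda)$, i.e.\ the poles of $\Gamma(-\lambda)/\Gamma(\tfrac{1+\lambda-|n|}{2})$, which are the points $|n|-1, |n|-3, \dots$ in $\textbf{B}$; a neighbourhood of radius $1$ around each is excised by $\textbf{B}_1$. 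So the first step is to show, using Stirling's formula, that there is a constant $C$ with $\big|\frac{1}{2\lambda c^{n,n}(-\lambda)}\big| \le C\, e^{(\Re\lambda+1)\cdot 0}$ — more precisely that this quantity grows at most like $C$ times an expression that, multiplied into the $t\to\infty$ bound $e^{-(\Re\lambda+1)t}$, still gives $Ce^{-(\Re\lambda+1)t}$ uniformly. A clean way: use the reflection/duplication identities to write the ratio of Gammas and observe that on $\C_+\setminus\textbf{B}_1$ it is bounded by a polynomial in $|\lambda|$; this polynomial growth will be absorbed into the exponential decay in $t$ for $t\ge 1/2$ and handled directly for small $t$.

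For part (ii), $t\in[1/2,\infty)$: here $\cosh^{-2}t \le \cosh^{-2}(1/2) < 1$ stays bounded away from $1$, so the series ${}_2F_1\big(\tfrac{1+\lambda}{2}+\tfrac{|n|}{2},\tfrac{1+\lambda}{2}-\tfrac{|n|}{2};1+\lambda;\cosh^{-2}t\big)$ must be controlled uniformly in $\lambda\in\C_+$. I would use the integral representation \eqref{integral representation of hypergeometric function} after possibly applying the duplication formula \eqref{properties of hypergeometric function-2} to shift parameters into the range $\Re c > \Re b > 0$ required there; the integrand $s^{b-1}(1-s)^{c-b-1}(1-sz)^{-a}$ with $|z|=\cosh^{-2}t \le \cosh^{-2}(1/2)$ is then bounded, and the $\Gamma$-prefactor in \eqref{integral representation of hypergeometric function} combines with $\frac{1}{2\lambda c^{n,n}(-\lambda)}$ to something of at most polynomial growth in $|\lambda|$. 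Together with the factor $(2\cosh t)^{-(1+\lambda)}$, whose modulus is $(2\cosh t)^{-(1+\Re\lambda)} \le C e^{-(\Re\lambda+1)t}$ for $t\ge 1/2$, and the fact that a fixed power of $|\lambda|$ is dominated by $e^{(\Re\lambda+1)/2}$ times a constant when $\Re\lambda$ is large while being trivially bounded when $\Re\lambda$ is small, I get $|b_\lambda(a_t)| \le C e^{-(\Re\lambda+1)t}$ with $C$ independent of $\lambda$. One must be slightly careful for $\lambda$ in a bounded part of $\C_+$ (where the polynomial-in-$|\lambda|$ bound is just a constant) versus $|\lambda|$ large; splitting into these two cases makes the uniformity transparent.

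For part (i), $t\in(0,1/2]$: now $z = \cosh^{-2}t \to 1^-$, so the hypergeometric series is near its singularity and the $\log\frac1t$ is expected. Since $c = 1+\lambda$ and $a+b = 1+\lambda$, we have $c-a-b = 0$, the logarithmic case of Gauss's connection formula. I would invoke the standard expansion of ${}_2F_1(a,b;a+b;z)$ near $z=1$, namely ${}_2F_1(a,b;a+b;z) = -\frac{\Gamma(a+b)}{\Gamma(a)\Gamma(b)}\big(\log(1-z) + \text{bounded}\big)$ as $z\to1$, valid uniformly for $\lambda$ in compact subsets of $\C_+\setminus\textbf{B}_1$; combined with $1-\cosh^{-2}t = \tanh^2 t \asymp t^2$ for small $t$ this gives $\log(1-z) \asymp \log\frac1t$. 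The prefactor $(2\cosh t)^{-(1+\lambda)}$ is bounded for $t\le 1/2$, and $\frac{\Gamma(a+b)}{\Gamma(a)\Gamma(b)} = \frac{\Gamma(1+\lambda)}{\Gamma(\frac{1+\lambda+|n|}{2})\Gamma(\frac{1+\lambda-|n|}{2})}$ is (up to the explicit $2$-powers) exactly the reciprocal of $\frac{1}{2\lambda c^{n,n}(-\lambda)}$ times $\lambda$, so multiplying by $\frac{1}{2\lambda c^{n,n}(-\lambda)}$ cancels the Gammas and leaves only $\frac{1}{\lambda}$ and bounded factors — hence $|b_\lambda(a_t)| \le C\log\frac1t$. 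The main obstacle is making all of this \emph{uniform in $\lambda$} over the unbounded region $\C_+\setminus\textbf{B}_1$: the classical asymptotic expansions near $z=1$ have error terms that depend on the parameters, so I would need either a quantitative version (e.g.\ via the Euler-integral representation with an explicit remainder, or the known uniform estimates for ${}_2F_1$ with large parameters) or to separate the argument into "$\lambda$ bounded" (where compactness and continuity suffice, after excising $\textbf{B}_1$) and "$|\lambda|$ large" (where one can use monotonicity of the defining integral for $b_\xi$ along the real axis together with a Phragmén–Lindelöf or maximum-principle comparison to transfer the real-axis bound to all of $\C_+\setminus\textbf{B}_1$). I expect the $|\lambda|$-large uniformity in regime (i) to be the delicate point; everything else is a bookkeeping of Gamma factors via Stirling.
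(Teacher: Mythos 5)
Your overall strategy for part (ii) is essentially the paper's: for $t\ge 1/2$ the factor $(2\cosh t)^{-(1+\lambda)}$ supplies the $e^{-(\Re\lambda+1)t}$, and the remaining $\Gamma$-ratio coming from $\frac{1}{2\lambda c^{n,n}(-\lambda)}$ is shown, using Stirling-type estimates (the paper cites \cite[Appendix, Lemma A.3]{PS}) together with the excision of $\textbf{B}_1$, to be bounded uniformly in $\lambda$. Your suggestion to pass through the Euler integral here is a reasonable alternative to the paper's bare use of the asymptotic $\Phi_{\sigma,\lambda}^{n,n}(a_t)\asymp e^{-(\Re\lambda+1)t}$.

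For part (i), however, you take a genuinely different route, and it contains a gap which you yourself flag and which is not a minor one. You propose to read off the $\log\frac1t$ from the classical connection formula for ${}_2F_1(a,b;a+b;z)$ near $z=1$ and then cancel the $\Gamma(a)\Gamma(b)/\Gamma(a+b)$ factor against $c^{n,n}(-\lambda)$. The difficulty is exactly what you identify: the remainder in that connection/expansion formula is not uniform in the parameters, so the estimate is only clean for $\lambda$ in a compact set, and the proposed remedy (Phragm\'en--Lindel\"of or a maximum-principle comparison with the real axis) is only sketched; making it work would require a growth bound in $\lambda$ that accounts for the removed balls $\textbf{B}_1$ and for the singularity of $b_\lambda(a_t)$ at $t=0$. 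The paper avoids this entirely by applying the contiguity relation \eqref{properties of hypergeometric function-2} exactly $k_0=[\,|n|/2\,]+1$ times to raise the third parameter from $1+\lambda$ to $1+\lambda+2k_0$, which puts the function in the range $\Re c>\Re b>0$ where the Euler integral \eqref{integral representation of hypergeometric function} applies; the integral is then split at $s=1/2$ and estimated by hand, yielding $|I|\le C\log\frac1t$ with $C$ manifestly independent of $\lambda\in\C_+$. Combined with the $\Gamma$-ratio estimate on $\C_+\setminus\textbf{B}_1$, this gives the uniform bound without any qualitative appeal to asymptotic expansions. If you want to salvage your route, you would need a quantitative, parameter-uniform version of the $c=a+b$ expansion; otherwise switching to the contiguity-plus-Euler-integral computation is the cleaner path and is what the paper does.
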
 
\begin{proof} 
(i) Since $${\Phi_{\sigma,\lambda}^{n,n}(a_t)} = (2\cosh t)^{-(1+\lambda )}  {}_2F_1\left( \frac{1+\lambda}{2}+ \frac{|n|}{2}, \frac{1+\lambda}{2}- \frac{|n|}{2} ; 1+ \lambda; \cosh^{-2}t \right),$$  we first find the estimate of the hypergeometric function near $t=0$ and then by polynomial approximation of gamma functions we will finally find the estimate of $b_\lambda$.
Now by \eqref{properties of hypergeometric function-2},
\begingroup
\allowdisplaybreaks
\begin{align}
&{}_2F_1\left( \frac{1+\lambda + |n|}{2}, \frac{1+\lambda -|n|}{2} ; 1+ \lambda; \cosh^{-2}t \right) \\&
=\frac{1}{(1+\lambda)(2+\lambda)} \Bigg[ (1+\lambda) \left( \frac{1+\lambda-|n|}{2}+1 \right) {}_2F_1\left( \frac{1+\lambda + |n|}{2}, \frac{1+\lambda-|n|}{2} + 1 ; 1+ \lambda +2; \cosh^{-2}t \right) \notag \\&
 \quad + \frac{1+\lambda +|n|}{2} \left( (1+ \lambda) - \frac{1+ \lambda + |n|}{2} \cosh^{-2} t \right) \, {}_2F_1 \left( \frac{1+\lambda+|n|}{2}+ 1 , \frac{1+\lambda-|n|}{2} + 1 ; 1+ \lambda +2; \cosh^{-2}t \right) \Bigg] \notag
\end{align}
\endgroup
Since $(1+ \lambda)(2+ \lambda) $ has no zero in $ \C_+$  and $\cosh t \asymp 1$ near $t=0$, so for all $t \in (0,\frac{1}{2}]$ we have $ \dfrac{(1+\lambda) (\frac{1+\lambda-|n|}{2}+1)}{(1+\lambda)(2+\lambda)} \asymp C$  and  $\dfrac{\left( (\frac{1+\lambda +|n|}{2}) \left( (1+ \lambda) - \dfrac{1+ \lambda + |n|}{2} \cosh^{-2} t \right) \right) } {(1+\lambda ) (2+ \lambda) } \asymp C$ for all $ \lambda \in \C _+$. 

By the same argument and applying the formula \eqref{properties of hypergeometric function-2} $k_0 = [\frac{|n|}{2}] +1 $ times  we can write ,
\begin{align*}
&{}_2F_1\left( \frac{1+\lambda + |n|}{2}, \frac{1+\lambda -|n|}{2}, ; 1+ \lambda; \cosh^{-2}t \right)\\
& = \sum_{i=0}^{k_0} \frac{P_i(\lambda, \cosh^{-2}t)}{Q_i(\lambda)} {}_2F_1\left( \frac{1+\lambda + |n|}{2} +i, \frac{1+\lambda -|n|}{2} + k_0 ; 1+ \lambda + 2k_0; \cosh^{-2}t \right)
\end{align*}
where  $P_i$'s are  polynomials in $\lambda$ and $\cosh^{-2}t$ and $Q_i$'s are polynomials in $\lambda$ which has no zero in $\C_+$ such that for all  $t \in  (0,\frac{1}{2}]$,
 $$\dfrac{P_i(\lambda, \cosh^{-2}t)}{Q_i(\lambda)}  \asymp C$$ for all $\lambda \in \C_+$.
\noindent Now since $\Re (1+ \lambda + 2k_0 ) > \Re (\frac{1+\lambda -|n|}{2} + k_0)$  from (\ref{integral representation of hypergeometric function}),
\begin{align}
&_2F_1\left( \frac{1+\lambda + |n|}{2} +i, \frac{1+\lambda -|n|}{2} + k_0; 1+ \lambda + 2k_0; \cosh^{-2}t \right) \notag \\
  =& \, \mathcal{C}(\lambda;n) \int_0^1 s^{\frac{1+\lambda -|n|}{2} + k_0 -1} {(1-s)}^{\frac{1+\lambda +|n|}{2}+ k_0 -1} {(1-s \cosh^{-2}t)}^{ -\frac{1+\lambda +|n|}{2} -i} ds \notag\\
 &\textup{   where  $\mathcal{C}(\lambda;n) = \frac{ \Gamma(1+ \lambda + 2k_0)}{ \Gamma \left( \frac{1+\lambda -|n|}{2} + k_0 \right) \Gamma \left( \frac{1+\lambda +|n|}{2}+ k_0 \right) }. $   \notag}\\
= & \, \mathcal{C}(\lambda;n)  (\cosh t)^{1+ \lambda + |n| + 2i}    \int_0^1 s^{\frac{1+\lambda -|n|}{2} + k_0 -1} {(1-s)}^{\frac{1+\lambda +|n|}{2}+ k_0 -1} {(\cosh^{2}t-s )}^{ -\frac{1+\lambda +|n|}{2} -i} ds \notag\\
&\textup{( writing $\cosh^2 t = 1+x $ and making the change of variable $ s \rightarrow 1-s$ we get) \notag }\\
\, \, \,
=& \, \mathcal{C}(\lambda;n) (\cosh t)^{1+ \lambda + |n| + 2i}\int_0^1 {(1-s)}^{\frac{1+\lambda -|n|}{2} + k_0 -1} {s}^{\frac{1+\lambda +|n|}{2}+ k_0 -1} {(x+ s )}^{ -\frac{1+\lambda +|n|}{2} -i} ds.
\end{align} 
Let $I $ be the integration above. Then,
$$|I| \leq \int_0^1 {(1-s)}^{\frac{1+\Re \lambda -|n|}{2} + k_0 -1} {s}^{\frac{1+ \Re \lambda +|n|}{2}+ k_0 -1} {(x+ s )}^{ -\frac{1+ \Re \lambda +}{2} -i} ds.$$
Now we let  $I_1$ be the integration on $(0, \frac{1}{2}]$ and  $I_2$ on $( \frac{1}{2}, 1]$. Then,
\begin{align*}
I_2 \leq C \int_{\frac{1}{2}}^1 (1-s)^{\frac{-1-|n|}{2} +k_0} 2^{\frac{-\Re \lambda}{2}} 2^{\frac{1+\Re \lambda +|n|}{2} +i} ds \leq C \text{(independent of $\lambda$)},
\end{align*}
and \begin{align*}
I_1 & \leq C\int_0^\frac{1}{2} {s}^{\frac{1+ \Re \lambda +|n|}{2}+ k_0 -1} {(x+ s )}^{ -\frac{1+ \Re \lambda +|n|}{2} -i}ds\\
& \leq C    {\dfrac{2^{ - {\frac{1+ \Re \lambda + |n|}{2} -k_0}} {(x+\frac{1}{2})^{- \frac{1+ \Re \lambda +|n|}{2} -i}}}{ \frac{1+ \Re \lambda +|n|}{2} +k_0}}  + C \dfrac{\frac{1+ \Re \lambda +|n|}{2} +i}{\frac{1+ \Re \lambda +|n|}{2} +k_0} \int_0^{\frac{1}{2}} s^{\frac{1+ \Re \lambda +|n|}{2}+ k_0} (x+s)^{- (\frac{1+ \Re \lambda +|n|}{2} +i)-1} ds\\
& \leq C + C \int_0^\frac{1}{2} {\left(  \frac{s}{x+s}\right)}^{\frac{1+ \Re \lambda +|n|}{2} +i} \frac{1}{x+s} ds\\
& \leq C + C \log \left(1+ \frac{1}{2x} \right).
\end{align*} Since $x= \sinh^2 t $ and $\log$ is an increasing function we have, $I_1 \leq C \log \frac{1}{t}$,  hence it follows $|I| \leq C \log \frac{1}{t}$ for all $ t \in (0,\frac{1}{2}]$.\\

 \noindent We now turn to the estimate of $b_\lambda$. Using the inequality above of $|I|$ and applying the expression of $c^{n,n}(\lambda)$ in  (\ref{definition of b lambda}) we get,
\begin{align*}
|b_\lambda(a_t)| & \leq  C \left|\dfrac{\Gamma \left( \dfrac{1+\lambda-|n|}{2} \right)\Gamma(\frac{1+\lambda+|n|}{2}) \Gamma(1+ \lambda + 2k_0)} {   \Gamma(1 + \lambda)  \Gamma(\frac{1+\lambda -|n|}{2} + k_0) \Gamma (\frac{1+\lambda +|n|}{2}+ k_0) }   \right| \log\frac{1}{t}\\
& \leq C \left|\dfrac{\Gamma(\frac{1+\lambda-|n|}{2} +k_0)\Gamma(\frac{1+\lambda+|n|}{2}) \Gamma(1+ \lambda + 2k_0)} {   (\frac{1+\lambda-|n|}{2})(\frac{1+\lambda-|n|}{2} +1) \cdots  (\frac{1+\lambda-|n|}{2}+ k_0 -1) \Gamma(1 + \lambda)  \Gamma(\frac{1+\lambda -|n|}{2} + k_0) \Gamma (\frac{1+\lambda +|n|}{2}+ k_0) }   \right| \log\frac{1}{t}\\  
& \leq C \left| \frac{(1+ |\lambda|)^{k_0} }{(\frac{1+\lambda-|n|}{2})(\frac{1+\lambda-|n|}{2} +1) \cdots  (\frac{1+\lambda-|n|}{2}+ k_0 -1)} \right| \log \frac{1}{t}
\end{align*} 
The last line of the inequality above follows from  \cite[Appendix, Lemma A.3 ]{PS}   and the fact that $k_0 - \frac{|n|}{2} \geq \frac{1}{2}$.
Therefore for all $\lambda \in \C_+ \setminus \textbf{B}_1$, (where $\textbf{B}_1 =\bigcup_{i=0}^{k_0 - 1} B(|n|-2i-1 ; 1) $ )
\begin{equation}
|b_\lambda(a_t)| \leq C \log \frac{1}{t},
 \end{equation}
 for all $t \in (0, \frac{1}{2}]$ and $C$ is independent of $\lambda$.

(ii) Since $\Phi^{n,n}_{\sigma,\lambda}(a_t) \asymp e^{-(\Re \lambda +1)t}$ near $\infty$ and by the definition of $b_\lambda$ and $c^{n,n}(\lambda) $ we get for all  $t \in [\frac{1}{2}, \infty),$\\
\begin{align*}
|b_\lambda(a_t)|  & \leq   C \left| \dfrac{\Gamma(\frac{1+\lambda-|n|}{2})\Gamma(\frac{1+\lambda+|n|}{2})}{\Gamma(1 + \lambda)}\right| |\Phi_{\sigma,\lambda}^{n,n}(a_t)| \\
 \leq  &  C  \left| \dfrac{\Gamma(\frac{1+\lambda-|n|}{2} +k_0)\Gamma(\frac{1+\lambda+|n|}{2})}{(\frac{1+\lambda-|n|}{2})(\frac{1+\lambda-|n|}{2} +1) \cdots  (\frac{1+\lambda-|n|}{2}+ k_0 -1) \Gamma(1 + \lambda)}\right| \, e^{-(\Re \lambda +1)t} \\
 \leq & C \left| \dfrac{(1+|\lambda|)^{k_0- \frac{1}{2}}}{(\frac{1+\lambda-|n|}{2})(\frac{1+\lambda-|n|}{2} +1) \cdots  (\frac{1+\lambda-|n|}{2}+ k_0 -1)} \right| \, e^{-(\Re \lambda +1)t} \hspace*{3mm}  \text{(using   \cite[Appendix, Lemma A.3 ]{PS})} \\
 \leq & C \, e^{-(\Re \lambda +1)t} \hspace*{5mm}  \text{(Since $\lambda \notin \textbf{B}_1)$.}
\end{align*}
\qquad  Therefore for all $t \in [\frac{1}{2}, \infty),$ $|b_\lambda(a_t)|\leq C e^{-(\Re\lambda+1)t}$ where $C$ is independent of $\lambda$. 

\end{proof}

\begin{remark}
The proof of the lemma  above  shows that to get the estimate of $b_\lambda$ near $0$ and $\infty$ we only need to remove a neighbourhood of the zeros of $ c^{n,n}(-\lambda)$ and origin (when $n$ is odd). If we only remove the zeros of $ c^{n,n}(-\lambda)$ and origin but not the neighbourhoods then on both cases the constant on the right hand side will depend on $\lambda$, for example for $t \in (0,\frac{1}{2}],$  $|b_\lambda(a_t)| \leq C_\lambda \log \frac{1}{t} $.

\end{remark}

\begin{lemma}\label{when b lambda is in L1}

\begin{enumerate}
\item [(a)] For all $\lambda\in\C_+ \setminus \textbf{B}$, $b_\lambda$ is locally integrable at $e$.
\item [(b)] For $\Re \lambda> 1 $ and $\lambda \not\in \textbf{B} $, $b_\lambda\in L^1(G)_{n,n}$.
\item[(c)]For all $\lambda\in\C_+$ and $\lambda \not\in \textbf{B}$, $b_\lambda$ is in $L^2$ outside neighbourhood of $e$. 
\item[(d)]For each $\lambda\in\C_+ \setminus \textbf{B} $, there exists $p<2$ (depending on $\lambda$) such that $b_\lambda$ is in $L^p$ outside neighbourhood of $e$. 
\end{enumerate}
\end{lemma}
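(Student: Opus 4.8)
The plan is to reduce all four assertions to elementary one-variable integral estimates by means of the Cartan decomposition, and then to insert the pointwise size bounds for $b_\lambda(a_t)$ near $t=0$ and $t=\infty$ that are already at hand. Since $b_\lambda$ is of $(n,n)$ type, $|b_\lambda(k_1a_tk_2)|=|b_\lambda(a_t)|$, so by the Cartan integral formula, for any $K\times K$-invariant measurable set $E\subseteq G$ and any $p\ge 1$, $\int_E|b_\lambda(x)|^p\,dx=\int_{\{t>0\,:\,a_t\in E\}}|b_\lambda(a_t)|^p\,\Delta(t)\,dt$, where $\Delta(t)=2\sinh 2t$; recall $\Delta(t)\asymp t$ as $t\to 0^+$ and $\Delta(t)\asymp e^{2t}$ as $t\to\infty$. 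Two size inputs will be used, each for a fixed $\lambda\in\C_+\setminus\textbf{B}$ and with constants allowed to depend on $\lambda$: a logarithmic bound $|b_\lambda(a_t)|\le C_\lambda\log(1/t)$ for $t\in(0,1/2]$, which is Lemma \ref{lemma- estimates of b lambda}(i) read through the Remark following it (the exclusion of the origin required there is automatic on $\C_+$, and $\textbf{B}$ is exactly the set of zeros of $c^{n,n}(-\lambda)$ in $\C_+$); and an exponential bound $|b_\lambda(a_t)|\le C_\lambda e^{-(\Re\lambda+1)t}$ for $t\ge 1/2$, which follows from $\Phi_{\sigma,\lambda}^{n,n}(a_t)\asymp e^{-(\Re\lambda+1)t}$ near $\infty$, from $c^{n,n}(-\lambda)\neq 0$ off $\textbf{B}$, and from the continuity of $b_\lambda$ on $[1/2,\infty)$.

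Granting these inputs, the four parts are short. For (a) only the behaviour near $e$, i.e.\ near $t=0$, matters, and $\int_0^{1/2}t\log(1/t)\,dt<\infty$, so the logarithmic bound makes $b_\lambda$ integrable on a neighbourhood of $e$. For (b) one adds the tail: when $\Re\lambda>1$, $\int_{1/2}^\infty e^{-(\Re\lambda+1)t}e^{2t}\,dt=\int_{1/2}^\infty e^{(1-\Re\lambda)t}\,dt<\infty$, hence $b_\lambda\in L^1(G)$, and being of $(n,n)$ type by construction it lies in $L^1(G)_{n,n}$. For (c) and (d), let $U=\{k_1a_tk_2:0\le t<t_0\}$ with $t_0>0$ fixed, a neighbourhood of $e$, so that $\int_{G\setminus U}|b_\lambda(x)|^p\,dx=\int_{t_0}^\infty|b_\lambda(a_t)|^p\,\Delta(t)\,dt$ and on $[t_0,\infty)$ one has $|b_\lambda(a_t)|^p\le C_\lambda e^{-p(\Re\lambda+1)t}$; for (c), $\int_{t_0}^\infty e^{-2(\Re\lambda+1)t}e^{2t}\,dt=\int_{t_0}^\infty e^{-2\Re\lambda\, t}\,dt<\infty$ because $\Re\lambda>0$; for (d), convergence needs $p(\Re\lambda+1)>2$, and since $\Re\lambda>0$ gives $2/(\Re\lambda+1)<2$, such a $p$ can be chosen in $[1,2)$ (and $p=1$ already works when $\Re\lambda>1$, by (b)).

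I do not anticipate a genuine obstacle; the work is essentially bookkeeping. The point deserving care is the gap between what Lemma \ref{lemma- estimates of b lambda} states and what is needed: that lemma provides constants uniform in $\lambda$ only outside the enlarged exceptional set $\textbf{B}_1$, whereas here the estimates are wanted for every $\lambda\in\C_+\setminus\textbf{B}$, with constants permitted to depend on $\lambda$. Near $0$ this is covered by the Remark; near $\infty$ I use the pointwise asymptotics of $\Phi_{\sigma,\lambda}^{n,n}$ together with $c^{n,n}(-\lambda)\neq 0$ in place of the uniform estimate in Lemma \ref{lemma- estimates of b lambda}(ii). It is also worth noting at the outset that $\lambda\neq 0$ on $\C_+$, so the factor $1/(2\lambda)$ in the definition of $b_\lambda$ is harmless, and that $K$ is a Haar-null subset of $G$, so the values of $b_\lambda$ on $K$ play no role in any of the $L^p$ statements.
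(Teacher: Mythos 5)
Your proposal is correct and takes essentially the same approach as the paper: the paper's proof is a one-line appeal to Lemma \ref{lemma- estimates of b lambda} and the asymptotics of $\Delta(t)$, and you simply fill in the same bookkeeping (Cartan integral formula plus the pointwise bounds), correctly noting via the Remark after Lemma \ref{lemma- estimates of b lambda} that the constants may depend on $\lambda$ once one removes only $\textbf{B}$ rather than the enlarged set $\textbf{B}_1$.
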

\begin{proof}
Proof of this lemma follows  directly from previous Lemma \ref{lemma- estimates of b lambda} and the asymptotic behaviour of $\Delta(t)$ near $0$ and $\infty$.
\end{proof}
\begin{remark}\label{existence of principal transform of b lambda}
By the lemma  above  $b_\lambda$ can be written as a sum of $L^1$ and $L^p$ ($p<2$) functions on $G$. Therefore its principal part of the Fourier transform  is a continuous function on $\C$, vanishing at infinity in $\C$. In fact in the next lemma we are going to find the Fourier transforms  of $b_\lambda$.
\end{remark}

\begin{lemma} \label{fourier transform of b lambda}
Let $\lambda\in \C_+ \setminus \textbf{B}$. Then,
\begin{align*}
\widehat{b}_{\lambda H}(i \xi) & =\frac{1}{\lambda^2 + \xi^2}, \, \text{ for all } \xi\in \mathbb{R} \text{ and }\\
\widehat{b}_{\lambda B}(k)&=\frac{1}{\lambda^2 -k^2}, \, \text{ for all } k \in \Gamma_n.
\end{align*} 
\end{lemma}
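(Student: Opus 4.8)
The plan is to compute $\widehat{b}_{\lambda H}$ and $\widehat{b}_{\lambda B}$ directly from their definitions, converting the group integrals into one–dimensional integrals over $A^+$ and then exploiting that the radial operator $\Pi_{n,n}(\Omega)$ is formally self-adjoint with respect to $\Delta(t)\,dt$.

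Since $b_\lambda$, $\phi_{\sigma,i\xi}^{n,n}$ and $\psi_k^{n,n}=\phi_{\sigma,|k|}^{n,n}$ are all of $(n,n)$-type and even in $t$, the Cartan integration formula collapses the transforms to
\[
\widehat{b}_{\lambda H}(i\xi)=\int_0^\infty b_\lambda(a_t)\,\phi_{\sigma,i\xi}^{n,n}(a_t)\,\Delta(t)\,dt,\qquad
\widehat{b}_{\lambda B}(k)=\int_0^\infty b_\lambda(a_t)\,\phi_{\sigma,|k|}^{n,n}(a_t)\,\Delta(t)\,dt .
\]
One first checks these converge absolutely for every $\lambda\in\C_+\setminus\textbf{B}$: near $t=0$ this is immediate from $|b_\lambda(a_t)|\le C_\lambda\log(1/t)$ (Lemma \ref{lemma- estimates of b lambda} and the remark after it) together with $\Delta(t)\asymp t$ near $0$ and the smoothness of the $\phi$'s at $e$; near $t=\infty$ one combines $|b_\lambda(a_t)|\le Ce^{-(\Re\lambda+1)t}$ (Lemma \ref{lemma- estimates of b lambda}) with the decay of $\phi_{\sigma,i\xi}^{n,n}(a_t)$, of order $e^{-t}$, for the principal part and the faster decay of the discrete-series coefficient $\phi_{\sigma,|k|}^{n,n}(a_t)$ for the discrete part, which is precisely what \eqref{limit of phi xi by Phi lambda} and \eqref{limit of phi k by Phi lambda} assert (the latter resting on the sharp estimate in \cite[Theorem 8.1]{Barker}). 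For $\Re\lambda>1$ one has $b_\lambda\in L^1(G)_{n,n}$ (Lemma \ref{when b lambda is in L1}) so the displayed integrals are literally $\widehat{b}_{\lambda H}$, $\widehat{b}_{\lambda B}$; for $0<\Re\lambda\le1$ one writes $b_\lambda=u+v$ with $u\in L^1$ and $v\in L^p$, $p<2$ (Lemma \ref{when b lambda is in L1}, Remark \ref{existence of principal transform of b lambda}), and uses that $\phi_{\sigma,i\xi}^{n,n}$, $\phi_{\sigma,|k|}^{n,n}$ are bounded and exponentially decaying on $A^+$ to see that the same integrals still compute the transforms.

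Writing $\Pi_{n,n}(\Omega)f=\Delta(t)^{-1}\frac{d}{dt}\big(\Delta(t)\,f'(a_t)\big)+\frac{n^2}{4\cosh^2t}f(a_t)$, the zero-order term cancels in Green's identity; using \eqref{Equation of eigen function of Pi {n,n}}, namely $\Pi_{n,n}(\Omega)b_\lambda=(\lambda^2-1)b_\lambda$, $\Pi_{n,n}(\Omega)\phi_{\sigma,i\xi}^{n,n}=(-\xi^2-1)\phi_{\sigma,i\xi}^{n,n}$ and $\Pi_{n,n}(\Omega)\phi_{\sigma,|k|}^{n,n}=(k^2-1)\phi_{\sigma,|k|}^{n,n}$, integration by parts on $(\varepsilon,R)$ followed by $\varepsilon\to0$, $R\to\infty$ gives
\[
(\lambda^2+\xi^2)\,\widehat{b}_{\lambda H}(i\xi)=\Big[\,\Delta(t)\big(b_\lambda'(a_t)\,\phi_{\sigma,i\xi}^{n,n}(a_t)-b_\lambda(a_t)\,(\phi_{\sigma,i\xi}^{n,n})'(a_t)\big)\Big]_{t=0}^{t=\infty},
\]
and the analogous identity for the discrete part with $-\xi^2$ replaced by $k^2$ and $\phi_{\sigma,i\xi}^{n,n}$ by $\phi_{\sigma,|k|}^{n,n}$. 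The boundary term at $t=\infty$ vanishes because $\Delta(t)b_\lambda'(a_t)$ is of order $e^{(1-\Re\lambda)t}$ while $\phi_{\sigma,i\xi}^{n,n}(a_t)$ tends to $0$ like $e^{-t}$ (and $\phi_{\sigma,|k|}^{n,n}(a_t)$ even faster) — this is exactly \eqref{limit of phi xi by Phi lambda}, \eqref{limit of phi k by Phi lambda} — while the term $\Delta(t)b_\lambda(a_t)(\phi_{\sigma,i\xi}^{n,n})'(a_t)$ is treated the same way. At $t=0$ one uses the sharp local behaviour $b_\lambda(a_t)=\tfrac14\log(1/t)+o(\log(1/t))$ (the hypergeometric function defining $\Phi_{\sigma,\lambda}^{n,n}$ is balanced, $c=a+b=1+\lambda$, so its $z\to1$ expansion is logarithmic, and the resulting $\Gamma$-factors cancel against those in $c^{n,n}(-\lambda)$), whence $\Delta(t)b_\lambda'(a_t)\to-1$ and $\Delta(t)b_\lambda(a_t)\to0$; since $\phi_{\sigma,i\xi}^{n,n}(e)=\phi_{\sigma,|k|}^{n,n}(e)=1$ and $(\phi_{\sigma,i\xi}^{n,n})'$, $(\phi_{\sigma,|k|}^{n,n})'$ are $O(t)$ near the origin, the bracket at $t=0$ equals $-1$ in both cases. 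Hence $(\lambda^2+\xi^2)\widehat{b}_{\lambda H}(i\xi)=0-(-1)=1$ and $(\lambda^2-k^2)\widehat{b}_{\lambda B}(k)=1$, which is the assertion.

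The main obstacle is the vanishing of the $\infty$-boundary term for the discrete part. It does not follow from the asymptotics of $\Phi_{\sigma,\lambda}^{n,n}$ alone: in $\phi_{\sigma,|k|}^{n,n}=c^{n,n}(|k|)\Phi_{\sigma,|k|}^{n,n}+c^{n,n}(-|k|)\Phi_{\sigma,-|k|}^{n,n}$ the solution $\Phi_{\sigma,-|k|}^{n,n}(a_t)$ is of order $e^{(|k|-1)t}$ and grows for $|k|>1$, so $\Delta(t)b_\lambda'(a_t)\phi_{\sigma,|k|}^{n,n}(a_t)$ is only controlled once one knows the genuine decay $\phi_{\sigma,|k|}^{n,n}(a_t)=O(e^{-(|k|+1)t})$ of the discrete-series matrix coefficient, that is, \eqref{limit of phi k by Phi lambda} via \cite[Theorem 8.1]{Barker}. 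A secondary point needing care is the exact constant in $b_\lambda(a_t)\sim\tfrac14\log(1/t)$, equivalently $\lim_{t\to0}\Delta(t)b_\lambda'(a_t)=-1$; this can also be obtained, avoiding the hypergeometric bookkeeping, from Abel's identity: since $b_\lambda$ and $\phi_{\sigma,\lambda}^{n,n}$ share the eigenvalue $\lambda^2-1$, the quantity $\Delta(t)\big(\phi_{\sigma,\lambda}^{n,n}(a_t)b_\lambda'(a_t)-(\phi_{\sigma,\lambda}^{n,n})'(a_t)b_\lambda(a_t)\big)$ is constant on $(0,\infty)$, and evaluating it as $t\to\infty$ through the Wronskian of $\Phi_{\sigma,\pm\lambda}^{n,n}$ and the normalisation \eqref{definition of b lambda} of $b_\lambda$ gives the value $-1$; as $\phi_{\sigma,\lambda}^{n,n}$, $\phi_{\sigma,i\xi}^{n,n}$, $\phi_{\sigma,|k|}^{n,n}$ all have the same $1$-jet at the origin, the same limit $-1$ enters the two identities above.
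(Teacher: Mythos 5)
Your proof is correct and takes essentially the same route as the paper: a Lagrange/Green identity for $\Pi_{n,n}(\Omega)$ on $A^+$, with the $t=0$ boundary term evaluated via the constant Wronskian $\Delta(t)\bigl(\phi_{\sigma,\lambda}^{n,n}b_\lambda'-(\phi_{\sigma,\lambda}^{n,n})'b_\lambda\bigr)$ (the paper's $[\phi_{\sigma,\lambda}^{n,n},\Phi_{\sigma,\lambda}^{n,n}]=2\lambda c^{n,n}(-\lambda)$ in rescaled form) and the $t=\infty$ term killed by the asymptotics of $\Phi_{\sigma,\lambda}^{n,n}$ for the principal part and by Barker's sharp decay of the discrete-series coefficient for the discrete part. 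You correctly isolate the one genuinely delicate point — that for $k\in\Gamma_n$ the vanishing at infinity cannot be read off from $\Phi_{\sigma,\pm|k|}^{n,n}$ alone and requires the true decay of $\psi_k^{n,n}$ — which is exactly how the paper handles it.
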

\begin{proof}
For two smooth functions $f$ and $g$ on $(0,\infty)$, we define 
 
$$[f,g](t)=\Delta(t)\left[ f^\prime(t)g(t) - f(t)g^\prime(t)\right],\,\,\,t>0.$$
  
\noindent An easy calculation shows that $[f,g]^\prime(t)=[\Pi_{n,n}(\Omega) f\cdot g-f\cdot \Pi_{n,n}(\Omega) g](t)\Delta(t)$. Therefore, for any $b>a>0$, we have
  \begin{eqnarray}
\int_a^b (\Pi_{n,n}(\Omega)f\cdot g-f\cdot \Pi_{n,n}(\Omega)g)(t)\Delta(t)=[f,g](b)-[f,g](a).\label{relation of integration}
\end{eqnarray}
Then by similar calculations in \cite[Lemma 8.1]{PS}  we have the following two results,
 $$ [\phi_{\sigma,\lambda}^{n,n},\Phi_{\sigma,\lambda}^{n,n}](\cdot) = 2 \lambda c^{n,n}(-\lambda)$$
and if $f$ is an even smooth function on $\mathbb{R}$ then
\begin{equation}\label{1.11}
\lim_{t \rightarrow 0^+} [f, \Phi_{\sigma,\lambda}^{n,n}](t) = 2\lambda c^{n,n}(-\lambda)f(0).
\end{equation}
%
\textbf{CASE 1} : $\widehat{b}_{\lambda H}(i \xi)=\frac{1}{\lambda^2 + \xi^2}, \quad \text{ for all } \xi\in \mathbb{R}$.\\
For $\xi \in \R ,$ we put $f= \phi_{\sigma,i\xi}^{n,n}, g=\Phi_{\sigma,\lambda}^{n,n}$
in equation (\ref{relation of integration}) we get,
$$\int_a^b \Phi_{\sigma,\lambda}^{n,n} (t) \phi_{\sigma,i\xi}^{n,n}(t) \Delta(t) dt = \frac{1}{-\lambda^2-\xi^2}\left([\phi_{\sigma,i\xi}^{n,n},\Phi_{\sigma,\lambda}^{n,n}](b)-
[\phi_{\sigma,i\xi}^{n,n},\Phi_{\sigma,\lambda}^{n,n}](a)\right).$$
Taking $a \rightarrow 0^+$, we get from (\ref{1.11})
$$\int_0^b \Phi_{\sigma,\lambda}^{n,n} (t) \phi_{\sigma,i\xi}^{n,n}(t) \Delta(t) dt= \dfrac{2\lambda c^{n,n}({-\lambda)}}{\lambda^2 + \xi^2} - \dfrac{[\phi_{\sigma,i\xi}^{n,n},\Phi_{\sigma,\lambda}^{n,n}](b)}{\lambda^2 + \xi^2}.$$
Therefore if we could show $ [\phi_{\sigma,i\xi}^{n,n},\Phi_{\sigma,\lambda}^{n,n}](b) \rightarrow 0$ as $b \rightarrow \infty$ then we will be done.\\
\noindent We note that the existence of limit is guaranteed by the equation  above. As like before we can write,
$$\lim_{b \rightarrow \infty}[\phi_{\sigma,i\xi}^{n,n},\Phi_{\sigma,\lambda}^{n,n}](b)= \lim_{b \rightarrow \infty} e^{-2 \lambda b}\left(\dfrac{\phi_{\sigma,i\xi}^{n,n}}{\Phi_{\sigma,\lambda}^{n,n}}\right)^\prime(b).$$
By the asymptotic behavior of $\phi_{\sigma,i\xi}^{n,n}$ and $\Phi_{\sigma,\lambda}^{n,n}$,
$$
\lim_{b\rightarrow\infty}\dfrac{\dfrac{\phi_{\sigma,i\xi}^{n,n}}{\Phi_{\sigma,\lambda}^{n,n}}(b)}{e^{2\lambda b}}=0.$$
Finally for all $\lambda \in \C_+ \setminus \textbf{B},$
$$\widehat{b}_{\lambda H}(i \xi) = \dfrac{1}{\lambda^2 -(i \xi)^2}, \, \text { for all } \xi \in \R .  $$\\

\textbf{CASE 2} : $\widehat{b}_{\lambda B}(k)=\dfrac{1}{\lambda^2 -k^2} \quad \text{ for all } k \in \Gamma_n$.\\
 We note that from \cite[p.30 propn 7.3]{Barker} we have ${\psi_{k}^{n,n}} ={\phi_{\sigma,|k|}^{n,n}}$ for all $k \in \Gamma_n$.\\
Let $k \in \Gamma_n$, we put $f= \psi_{k}^{n,n}, g=\Phi_{\sigma,\lambda}^{n,n}$
in equation (\ref{relation of integration}) to get,
$$\int_a^b \Phi_{\sigma,\lambda}^{n,n} (t) \psi_{k}^{n,n}(t) \Delta(t) dt = \frac{1}{-\lambda^2+k^2}\big([\psi_{k}^{n,n},\Phi_{\sigma,\lambda}^{n,n}](b)-
[\psi_{k}^{n,n},\Phi_{\sigma,\lambda}^{n,n}](a)\big).$$
Taking $a \rightarrow 0^+$, we get from (\ref{1.11})
$$\int_0^b \Phi_{\sigma,\lambda}^{n,n} (t) \psi_{k}^{n,n}(t) \Delta(t) dt= \dfrac{2\lambda c^{n,n}({-\lambda)}}{\lambda^2 - k^2} - \dfrac{[\psi_{k}^{n,n},\Phi_{\sigma,\lambda}^{n,n}](b)}{\lambda^2 - k^2}.$$
Therefore if we could show $ [\psi_{k}^{n,n},\Phi_{\sigma,\lambda}^{n,n}](b) \rightarrow 0$ as $b \rightarrow \infty$ then we will be done.\\
\noindent From \cite[p.33 Theorem 8.1]{Barker} we get that there exist constants $C, r_1, r_2, r_3  \geq 0$ such that\\
$$|\psi^{n,n}_k( t)| \leq C (1+ |n|)^{r_1} (1+|k|)^{r_2} (1+t)^{r_3} e^{-2t}$$ for all $k \in \Z^*$ for which $|k| \geq 1$ and for all $n \in \Z (k).$ \\

\noindent Now by the asymptotic behaviour of $\Phi_{\sigma,\lambda}^{n,n}$ we get,

$$\left| \dfrac{\psi_k^{n,n}(b)}{e^{2 \lambda b} \Phi_{\sigma,\lambda}^{n,n}(b)} \right| \leq  C \dfrac{ (1+|k|)^{r_2} (1+b)^{r_3} e^{-2b}}{e^{2\lambda b}  e^{-(\lambda +1)b}} \leq C \dfrac{ (1+|k|)^{r_2} (1+b)^{r_3} }{e^{(\lambda+1)b}}$$
for a fixed $n$. Therefore  $\lim\limits_{b \rightarrow \infty } \left| \dfrac{\psi_k^{n,n}(b)}{e^{2 \lambda b} \Phi_{\sigma,\lambda}^{n,n}(b)} \right| = 0$. This completes the proof.

\end{proof}

\begin{remark}\label{fourier transform of b lambda in S1}
Since for $\Re \lambda > 1 $ and $\lambda \not\in \textbf{B}$, $b_\lambda$ is in $L^1(G)_{n,n}$ and its principal Fourier transform is a well defined continuous function on the strip $S_1$, which is also holomorphic in $S_1^o$. Therefore by analytic continuation we can write for $\Re \lambda > 1 $ and $\lambda \not\in \textbf{B}$,
$$\widehat{b_\lambda}_H (z) = \dfrac{1}{\lambda^2 - z^2}, \, \text{ for all } z \in S_1.$$
\end{remark}
We now turn to the estimates of $||b_\lambda||_1$ which is essential in $\S$\ref{Resolvent Transform}.
\begin{lemma}\label{L-1 norm estimates of b-lambda}
\noindent (i) If $\Re\lambda>1$ and $\lambda \not\in \textbf{B}_1$ ,
$
||b_\lambda||_1\leq C\frac{(1+|\lambda|)}{\Re\lambda-1}
$ for some $C>0$.

(ii) $||b_\lambda||_1\rightarrow 0$ if $\lambda\rightarrow \infty$ along the positive real axis.
\end{lemma}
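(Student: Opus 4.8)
\textbf{Proof plan for Lemma \ref{L-1 norm estimates of b-lambda}.}

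The plan is to compute $\|b_\lambda\|_1$ directly from the Cartan integration formula $\|b_\lambda\|_1 = \int_0^\infty |b_\lambda(a_t)|\, \Delta(t)\, dt$, with $\Delta(t) = 2\sinh 2t$, and then feed in the two pointwise estimates from Lemma \ref{lemma- estimates of b lambda}. First I would split the integral at $t = 1/2$ as $\|b_\lambda\|_1 = \int_0^{1/2} + \int_{1/2}^\infty =: J_1 + J_2$. For $J_1$, since $\Delta(t) \asymp t$ near the origin and $|b_\lambda(a_t)| \leq C\log(1/t)$ by Lemma \ref{lemma- estimates of b lambda}(i), the integrand is bounded by $C t \log(1/t)$, which is integrable on $(0,1/2]$, so $J_1 \leq C$ with $C$ independent of $\lambda$ (as long as $\lambda \notin \textbf{B}_1$). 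For $J_2$, I would use $\Delta(t) \leq C e^{2t}$ on $[1/2,\infty)$ together with $|b_\lambda(a_t)| \leq C e^{-(\Re\lambda+1)t}$ from Lemma \ref{lemma- estimates of b lambda}(ii), giving an integrand bounded by $C e^{-(\Re\lambda - 1)t}$; integrating yields $J_2 \leq C \dfrac{e^{-(\Re\lambda-1)/2}}{\Re\lambda - 1} \leq \dfrac{C}{\Re\lambda - 1}$, which is finite precisely because $\Re\lambda > 1$.

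Combining the two bounds gives $\|b_\lambda\|_1 \leq C\left(1 + \dfrac{1}{\Re\lambda-1}\right) \leq C\dfrac{1 + |\lambda|}{\Re\lambda - 1}$ for $\Re\lambda > 1$, $\lambda \notin \textbf{B}_1$, where the last inequality is a crude majorization (using $\Re\lambda - 1 < |\lambda|$ and $1 \le \frac{1+|\lambda|}{\Re \lambda - 1}$ when $\Re \lambda - 1 \le 1+|\lambda|$, which always holds); this proves part (i). Actually a cleaner route for (i) is to keep the exponential factor: for $t \in [1/2,\infty)$ one has $\int_{1/2}^\infty e^{-(\Re\lambda-1)t}\,dt = \frac{e^{-(\Re\lambda-1)/2}}{\Re\lambda-1}$, and the numerator is bounded by $1$; the factor $1+|\lambda|$ in the statement is simply there to absorb the $O(1)$ contribution from $J_1$ into a single fraction, so it suffices to note $J_1 + J_2 \le C + \frac{C}{\Re\lambda-1} \le \frac{C(1+|\lambda|)}{\Re\lambda-1}$.

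For part (ii), I would restrict to $\lambda = \xi$ real with $\xi \to +\infty$; then eventually $\xi \notin \textbf{B}_1$ (which is a bounded set), so part (i) applies, but that bound alone only gives $O(\xi^{-1}\cdot\xi) = O(1)$ — not enough. Instead I would redo the split with the decay in $\xi$ made explicit: $J_2(\xi) \leq C\frac{e^{-(\xi-1)/2}}{\xi-1} \to 0$, and for $J_1(\xi)$ I would extract decay in $\xi$ from the pointwise estimate. The honest obstacle here is $J_1$: the bound $|b_\xi(a_t)| \le C\log(1/t)$ on $(0,1/2]$ is $\xi$-uniform but does not decay in $\xi$. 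To fix this I would sharpen the near-$0$ analysis of Lemma \ref{lemma- estimates of b lambda}(i) for real $\xi \to \infty$: going back to the expression $b_\xi(a_t) = \frac{1}{2\xi c^{n,n}(-\xi)}\Phi^{n,n}_{\sigma,\xi}(a_t)$ and the polynomial-times-hypergeometric representation used there, the Gamma-function prefactor in $c^{n,n}(-\xi)^{-1}$ contributes an extra decaying power of $\xi$ once one does not discard it, while the remaining hypergeometric integral $I$ is $O(\log(1/t))$ uniformly; alternatively, since $b_\xi$ is positive for $\xi > n+1$ and $\widehat{b_\xi}_H(0) = 1/\xi^2$ by Lemma \ref{fourier transform of b lambda}, one gets $\int_0^{1/2} b_\xi(a_t)\Delta(t)\,dt \le \int_0^\infty b_\xi(a_t)\Delta(t)\,dt$ and one can try to relate this to $\widehat{b_\xi}_H$ evaluated suitably. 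Either way, I expect to conclude $J_1(\xi) \to 0$, hence $\|b_\xi\|_1 \to 0$. The main work — and the only genuinely delicate point — is thus producing the decay in $\xi$ for the near-identity piece $J_1$; the far piece $J_2$ and all of part (i) are routine consequences of Lemma \ref{lemma- estimates of b lambda} and the asymptotics of $\Delta$.
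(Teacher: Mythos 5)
Part (i) of your proposal is exactly the paper's argument: split at $t = 1/2$, feed in Lemma \ref{lemma- estimates of b lambda}(i),(ii) and the asymptotics $\Delta(t)\asymp t$ near $0$, $\Delta(t)\asymp e^{2t}$ at $\infty$, and majorize crudely to reach $C(1+|\lambda|)/(\Re\lambda-1)$. Nothing to add there.

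For part (ii), you correctly diagnose the obstruction (the near-identity piece $J_1$ does not decay in $\xi$ under the coarse $\log(1/t)$ bound) and you correctly sense that positivity of $b_\xi$ for large real $\xi$ is the key input, but neither of your two suggested routes is actually carried through, and this is a genuine gap. Your second suggestion is the closer one, so let me flag its difficulty: one cannot ``relate $\int_0^\infty b_\xi(a_t)\Delta(t)\,dt$ to $\widehat{b_\xi}_H$ evaluated suitably'' in general, because that would require $\phi^{n,n}_{\sigma,z}\equiv 1$ for some $z$, which only happens for $n=0$ (at $z=1$, giving $\|b_\xi\|_1=\widehat{b_\xi}_H(1)=1/(\xi^2-1)$); for $n\neq 0$ there is no such $z$. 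The paper's device replaces this with a weight: since $b_\xi\geq 0$ for $\xi>n+1$, one has
\begin{equation*}
\|b_\xi\|_1=\int_0^\infty b_\xi(a_t)\,\Delta(t)\,dt\;\leq\;\int_0^\infty (\cosh t)^{|n|}\,b_\xi(a_t)\,\Delta(t)\,dt,
\end{equation*}
and the right-hand side can be evaluated \emph{exactly} because, after the substitution $u=\cosh^{-2}t$, it becomes an integral of the form $\int_0^1 u^{d-1}(1-u)^{b-d-1}{}_2F_1(a,b;c;u)\,du$ to which \eqref{properties of hypergermetric function-3} applies; the Gamma factors collapse against $1/(2\xi\,c^{n,n}(-\xi))$ to give precisely $1/(\xi^2-(n+1)^2)$, which tends to $0$. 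This is the calculation the paper cites from \cite[Lemma 3.3]{PS}. Your first alternative (extracting extra decay from the Gamma prefactor in $c^{n,n}(-\xi)^{-1}$ while keeping the near-zero hypergeometric estimate uniform) would not succeed as stated either, since the $\xi$-dependence of the hypergeometric factor near $t=0$ is entangled with that of the prefactor and must be tracked jointly; the weight trick above sidesteps all of that by producing a closed-form answer. So: part (i) is fine, but to finish part (ii) you need the $(\cosh t)^{|n|}$-weighted integral together with the hypergeometric evaluation formula \eqref{properties of hypergermetric function-3}.
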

\begin{proof}
(i)  Since $\Delta(t)\asymp t$ near $0$ and $\Delta(t)\asymp e^{2 t}$ near $\infty$, from Lemma \ref{lemma- estimates of b lambda} we can write,
\begin{align*}
||b_\lambda||_1 &= \int_0^{1/2}|b_\lambda(a_t)|\Delta(t)dt + \int_{\frac{1}{2}}^\infty |b_\lambda(a_t)|\Delta(t)dt  \\
&\leq \int_0^{\frac{1}{2}} t \log\frac{1}{t} + C \int_{\frac{1}{2}}^\infty e^{( 1-\Re \lambda )t} \, \\
&\leq C + \frac{C}{\Re \lambda-1} \, \\
& \leq C \, \dfrac{1+ |\lambda|}{\Re \lambda -1} .
\end{align*}

(ii)If $\lambda = \xi \in \R $ and $\xi > n+1$ then $b_\xi (a_t)$ is nonnegative.  Hence
\begin{align*}
||b_\xi||_1 = \int_{\R_+} b_\xi (a_t) \Delta(t) dt &\leq  \int_{\R_+} (\cosh t)^{|n|} b_\xi (a_t) \Delta(t) dt  \quad  \\
&= \frac{1}{\xi^2-(n+1)^2}. 
\end{align*}
The last line of the inequalities follows from similar calculation of \cite[Lemma 3.3]{PS}, which uses \eqref{properties of hypergermetric function-3}.
Hence the proof follows.
\end{proof}
\begin{lemma}\label{b lambda dense in L1(G)n,n }

The functions $ \{ b_\lambda  \mid \Re \lambda > 1 $ and $\lambda \not\in \textbf{B} \}$ span a dense subset of $L^1(G)_{n,n}$.
\end{lemma}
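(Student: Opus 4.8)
The plan is to show that the closed linear span $V$ of $\{b_\lambda \mid \Re\lambda>1,\ \lambda\notin\mathbf B\}$ is all of $L^1(G)_{n,n}$ by a Hahn–Banach argument: pick $g\in L^\infty(G)_{n,n}$ annihilating $V$ and prove $g=0$. For such a $g$, the function $\lambda\mapsto\langle b_\lambda,g\rangle$ vanishes identically on the half-plane $\{\Re\lambda>1\}\setminus\mathbf B$. The first step is to identify this pairing with a genuine transform of $g$. Using Lemma~\ref{fourier transform of b lambda} (equivalently Remark~\ref{fourier transform of b lambda in S1}), $\widehat{b_\lambda}_H(z)=\tfrac1{\lambda^2-z^2}$ on $S_1$ and $\widehat{b_\lambda}_B(k)=\tfrac1{\lambda^2-k^2}$ on $\Gamma_n$, so by the Plancherel/inversion formula for $L^1(G)_{n,n}$ (or rather by pairing $b_\lambda$ against the Fourier transform side of $g$) the quantity $\langle b_\lambda,g\rangle$ becomes, up to a constant, an integral of $\tfrac1{\lambda^2-\xi^2}$ against the principal part of the ``Fourier transform'' of $g$ plus a finite sum of $\tfrac1{\lambda^2-k^2}$ against the discrete part. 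In other words $\langle b_\lambda,g\rangle$ is (a constant times) the Cauchy/Stieltjes-type transform, in the variable $w=\lambda^2$, of a measure built from $g$; this transform is holomorphic off $[1,\infty)\cup\{k^2:k\in\Gamma_n\}$ in the $w$-plane.

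Next I would invoke the vanishing: since $\langle b_\lambda,g\rangle=0$ for all $\Re\lambda>1$ away from the finite set $\mathbf B$, and the transform is holomorphic there, it vanishes on the whole region $\{\Re\lambda>1\}$ by removing the removable singularities at $\mathbf B$. Translating back: the Stieltjes transform of the measure associated to $g$ vanishes on a set with an accumulation point in its domain of holomorphy, hence vanishes identically, hence (by uniqueness of the Stieltjes transform, e.g. inverting via boundary values / Stieltjes inversion) the measure is zero. Concretely this forces the principal part of the Fourier transform of $g$ to vanish a.e.\ on $i\R$ and the discrete coefficients $\widehat{g}_B(k)$ to vanish for all $k\in\Gamma_n$. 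By the injectivity of the Fourier transform on $L^1(G)_{n,n}$ (which follows from Lemma~\ref{Isomorphism of schwarz space} on the Schwartz level together with a standard density/approximation argument, or directly from the Plancherel theorem for $G$ restricted to the $(n,n)$-isotypic component), this gives $g=0$.

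The step I expect to be the main obstacle is making the pairing identity $\langle b_\lambda,g\rangle=\text{(constant)}\cdot\big(\text{Stieltjes transform of a }g\text{-measure}\big)$ rigorous for $g\in L^\infty$ rather than $g$ in a nice dense class. One must justify interchanging the $G$-integration defining $\langle b_\lambda,g\rangle$ with the spectral (Fourier-inversion) decomposition of $b_\lambda$; here the decomposition of Lemma~\ref{when b lambda is in L1}, writing $b_\lambda$ as a sum of an $L^1$ function and an $L^p$ ($p<2$) function supported away from $e$, is exactly what lets one apply Fourier inversion and pair legitimately against $L^\infty$, with the principal part of $\widehat{b_\lambda}_H$ being continuous and vanishing at infinity (Remark~\ref{existence of principal transform of b lambda}). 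A secondary, more routine point is checking that the finitely many discrete terms contribute only finitely many extra possible poles in the $w=\lambda^2$ plane and do not obstruct the identity theorem argument; since $\Gamma_n$ is finite this is harmless. Once these analytic justifications are in place, the uniqueness theorem for Stieltjes transforms and injectivity of the Fourier transform close the argument.
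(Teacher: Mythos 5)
Your proposal takes a genuinely different route from the paper, and the route has a real gap at its core. The paper does not dualize: it shows directly that $C_c^\infty(G)_{n,n}$ lies in the closed span of $\{b_\lambda\}$. For $f\in C_c^\infty(G)_{n,n}$, the Paley--Wiener theorem makes $\widehat f_H$ entire with polynomial decay on vertical strips, so Cauchy's formula on the contour $\Gamma_1=(|n|+2)+i\R$ (symmetrized using evenness of $\widehat f_H$) gives
$\widehat f_H(w)=\frac{1}{2\pi i}\int_{\Gamma_1}2z\,\widehat f_H(z)\,\widehat{b}_{zH}(w)\,dz$
and the matching identity on $\Gamma_n$ via $\widehat f_B(k)=\widehat f_H(|k|)$. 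The decay of $\widehat f_H$ together with the bound $\|b_z\|_1\lesssim(1+|z|)/(\Re z-1)$ from Lemma~\ref{L-1 norm estimates of b-lambda} makes the $L^1(G)_{n,n}$-valued integral $\frac{1}{2\pi i}\int_{\Gamma_1}2z\,\widehat f_H(z)\,b_z\,dz$ absolutely convergent, and injectivity of the Fourier transform identifies it with $f$; Riemann sums then exhibit $f$ as a limit of finite linear combinations of $b_z$'s.

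Your argument instead reduces by Hahn--Banach to showing that any $g\in L^\infty(G)_{n,n}$ with $\langle b_\lambda,g\rangle\equiv 0$ on $\{\Re\lambda>1\}\setminus\mathbf B$ must vanish, and the step on which everything hinges is the asserted identity
$\langle b_\lambda,g\rangle = \text{(Stieltjes transform of a measure built from }g\text{)}$.
You flag this as ``the main obstacle,'' and it is in fact a genuine gap rather than a technical detail to be filled in. For general $g\in L^\infty(G)_{n,n}$ there is no spectral \emph{measure} attached to $g$: the Fourier transform of an $L^\infty$ object is at best a tempered distribution, and the Plancherel pairing of $L^1$ against $L^\infty$ does not factor through a Fourier-side integral against a measure. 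The $L^1+L^p$ decomposition of $b_\lambda$ (Lemma~\ref{when b lambda is in L1}) tells you that $\widehat{b_\lambda}_H$ is a nice continuous function on the spectrum, but it does not produce the Fubini-type identity $\langle b_\lambda,g\rangle=\int\frac{d\mu_g(\xi)}{\lambda^2+\xi^2}+\sum_{k\in\Gamma_n}\frac{c_k(g)}{\lambda^2-k^2}$ that your Stieltjes inversion step requires. Without that identity, the ``uniqueness of Stieltjes transforms'' argument has nothing to act on; and establishing it rigorously for arbitrary $g\in L^\infty(G)_{n,n}$ is essentially as hard as (and closely entangled with) the Wiener--Tauberian problem this lemma is a step toward, which is precisely why the paper later deploys the full resolvent-transform machinery for the $L^\infty$ annihilators and proves the present denseness lemma by the much more elementary test-function route. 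If you want to keep a duality-flavored argument, the honest version would be: show $\langle b_\lambda,g\rangle\equiv 0$ on $\Re\lambda>1$ implies $\langle f,g\rangle=0$ for every $f\in C_c^\infty(G)_{n,n}$ by approximating $f$ via the same Cauchy integral --- but that is just the paper's proof in disguise.
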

\begin{proof}
We will show that $\overline{\text{span} \{ b_\lambda | \Re \lambda >1 \text{ and } \lambda \notin \textbf{B} \} }$
 contains $C_c^\infty (G)_{n,n}$ and since $C_c^\infty (G)_{n,n}$ is dense in $L^1(G)_{n,n}$, the lemma will follow.\\
 Let $f\in C_c^\infty(G)_{n,n}$. Since $\what{f}_H$ is entire and it has polynomial decay on any bounded vertical strip (by Paley-Wiener theorem) Cauchy's formula implies that
 \quad $$\widehat{f}_H(w)=\dfrac{1}{2\pi i } \int_{\Gamma_1 } \dfrac{ \widehat{f}_H(z)}{z-w} dz  + \dfrac{1}{2\pi i } \int_{\Gamma_2 } \dfrac{ \widehat{f}_H(z)}{z-w} dz, \hspace*{3mm} \text{for $w \in \C.$ } $$ 
\textup{where $\Gamma_1= (|n|+2)+ i\R $ downward and $\Gamma_2= -(|n|+2)+ i\R $ upward.} Next by the change of variable $z \rightarrow -z$ in the second integral,
\begin{align}
\widehat{f}_H(w) &= \dfrac{1}{2\pi i } \int_{\Gamma_1 } \dfrac{ \widehat{f}_H(z)}{z-w} dz + \dfrac{1}{2\pi i } \int_{\Gamma_1 } \dfrac{ \widehat{f}_H(-z)}{-z-w} (-dz) .\notag
\end{align}
We know $\what{f}_H(z)$ is an even function, therefore for all $w \in \C$ 
\begin{equation}\label{cauchy formula presentation of fH}
\what{f}_H(w)= \dfrac{1}{2\pi i } \int_{\Gamma_1 } \dfrac{ 2z \widehat{f}_H(z)}{z^2-w^2} dz.  
\end{equation}
Since $ \widehat{f}_B(k) =  \widehat{f}_H(|k|) \text{ for all $k \in \Gamma_n$},$ so from \eqref{cauchy formula presentation of fH} and together with Lemma \ref{fourier transform of b lambda} we get,
\begin{eqnarray}
\widehat{f}_H(w) = \dfrac{1}{2\pi i } \int_{\Gamma_1 }  2z \widehat{f}_H(z)\widehat{b}_{zH} (w)  dz, \, \text{ for all }  w \in S_1, \label{principal fourier transform of f in terms of bz}\\
\widehat{f}_B(k) = \dfrac{1}{2\pi i } \int_{\Gamma_1 }  2z \widehat{f}_H(z)\widehat{b}_{zB} (k)  dz, \,  \text{ for all }  k \in  \Gamma_n.\label{discrete fourier transform of f in terms of bz}
\end{eqnarray}

The decay condition on $\what{f}_H $ and Lemma \ref{L-1 norm estimates of b-lambda} imply that the $L^1(G)_{n,n}$ valued integral 
$$\dfrac{1}{2\pi i } \int_{\Gamma_1 }  2z \widehat{f}_H(z){b}_{z} (\cdot)  dz$$
converges and  \eqref{principal fourier transform of f in terms of bz}, \eqref{discrete fourier transform of f in terms of bz} implies that it must converge to $f$.
 Thus the Riemann sums which are nothing but finite linear combinations of $b_\lambda$'s converge to $f$. So we can conclude that $f$ is in the closed subspace spanned by  $ \{ b_\lambda  \,  | \, \Re \lambda >1 \text{ and } \lambda \not\in \textbf{B}  \}   .$ The lemma follows.
\end{proof}

\section{\textbf{Resolvent transform}}\label{Resolvent Transform}

Let $L^1_\delta(G)_{n,n}$ be the unitization of $L^1(G)_{n,n} $ and $\delta$, where $\delta$ is the $(n,n)$ type distribution defined by $\delta(\phi)= \phi(e)$ for all $\phi \in C_c^\infty (G)_{n,n}$. Maximal ideal space of $L^1_\delta(G)_{n,n}$ is  $\big\{L_z:z\in S_1\cup\{\infty\}\big\}$ and $\big\{L_k^{\prime}: k \in \Gamma_n \big\}$, 
 where $L_z$ and $L_k^{\prime}$ are the complex homomorphism on $L^1_\delta(G)_{n,n}$ defined by 
$$ L_z(f)= \what{f}_H(z) \text{ and } L_k^{\prime}(f) =\what{f}_B (k) \text{  for all }  f \in L^1_\delta(G)_{n,n}. $$
\noindent From now on we will denote $I$ as a closed ideal of $L^1(G)_{n,n}$ such that $\lbrace \what{f}_H : f \in I \rbrace$ and $\lbrace \what{f}_B : f \in I \rbrace$ does not have common zero on $S_1$ and $\Gamma_n$ respectively. Since $\delta \, * f = f$  for all $f \in L^1(G)_{n,n}$ so $I$ is also an ideal of $L^1_\delta (G)_{n,n}$ and $L^1_\delta (G)_{n,n}/ I$ makes sense. 

In Banach algebra theory if $J$ is a closed ideal of a commutative Banach algebra $\mathcal{A}$ then the maximal ideal space of $\mathcal{A} / J$ is 
$$\Sigma(\mathcal{A}/J) = \lbrace h \in \Sigma(\mathcal{A}) : h =0  \text{ on } J \rbrace, $$ 
 where $\Sigma(\mathcal{A})$ denotes the maximal ideal space of $\mathcal{A}.$ \\
From the theory above the maximal ideal space of $L^1_\delta (G)_{n,n}/ I$ is the complex homomorphism $\tilde{L}_\infty$ and it is defined by $$\tilde{L}_\infty(f+I) = \what{f}_H(\infty) \text { for all }f \in L^1_\delta (G)_{n,n}/ I .$$ It also follows that an element $f+ I$  in $L^1_\delta (G)_{n,n}/ I$ is invertible if and only if $\what{f}_H (\infty) \neq 0$.

Let $\lambda_0$ be a fixed complex number with $\Re \lambda_0 > n+1$. Then by Lemma \ref{when b lambda is in L1} $b_{\lambda_0}$ is in $L^1(G)_{n,n}$. For $\lambda \in \C $ the function, $$\lambda \mapsto \what{\delta} - (\lambda_0^2 -\lambda^2 ) \, \what{b_{\lambda_0}}_H$$ does not vanish at $\infty$ and hence ${\delta} - (\lambda_0^2-\lambda^2) \, {b_{\lambda_0}} +I$ is invertible in the quotient algebra $L^1_\delta(G)_{n,n} /I $. We put
\begin{eqnarray}\label{definition of B-lambda}
B_\lambda=\left(\delta-(
\lambda_0^2 -\lambda^2)b_{\lambda_0}+I\right)^{-1}*\left
(b_{\lambda_0}+I\right),\hspace{3mm}\text{ for }\lambda\in\C.
\end{eqnarray}
Now let $g \in  L^\infty(G)_{n,n}$ annihilates $I$,  so we can take $g $ as a bounded linear functional on $L^1(G)_{n,n} / I $. We define the resolvent transform  $\mathcal{R}[g]$ of $g$ by
\begin{eqnarray}\label{defn-R-g}
\mathcal{R}[g](\lambda)=\left\langle B_\lambda,g\right\rangle.
\end{eqnarray}
From \eqref{definition of B-lambda}, $\lambda\mapsto B_\lambda$ is a Banach space valued  even holomorphic function on $\C  $. So $\mathcal{R}[g]$ is an even holomorphic function on $\C  $.\\

We need an explicit formula of the function $\mathcal{R}[g]$ almost everywhere in $\C$. We will show for $\Re \lambda >1$ and $\lambda \notin $ \textbf{B}, $B_\lambda =b_\lambda +I$. Also, for  $0 < \Re\lambda <1$ we find a representative of the cosets $B_\lambda$ in the next section.
\section{\textbf{Representatives of} \textbf{$B_\lambda,  0 < \Re\lambda <1$ and properties of $\mathcal{R}[g]$}}
 
Let  $\lambda$ be such that $0< \Re \lambda <1$. For  $f \in L^1(G)_{n,n}$ 
we define
\begin{eqnarray}\label{definition of T-lambda-f}
T_\lambda f:=\widehat{f}_H(\lambda)b_\lambda -f*b_\lambda.
\end{eqnarray}
Since $b_\lambda$ is a sum of $L^1$ and $L^p$ functions (by Lemma \ref{when b lambda is in L1}) $T_\lambda f$ is well defined and the principal and  discrete part of Fourier transforms exist on $i\R$ and $\Gamma_n$ respectively. The proof follows directly from Lemma \ref{fourier transform of b lambda}. 

\begin{lemma}\label{Fourier transform of T lambda f}
 Let $0 < \Re\lambda <1 $ and $f$ be a $L^1(G)_{n,n}$ function on G. Then 
\begin{align*}
&\widehat{T_\lambda f}_H( i \xi)=\frac{\widehat{f}_H(\lambda)-\widehat{f}_H(i \xi)}{\lambda^2 +  \xi^2},\hspace{3mm}\textup{for all}\hspace{3mm} \xi\in\mathbb{R},\\
 & \widehat{T_\lambda f}_B(k)=\frac{\widehat{f}_H(\lambda)-\widehat{f}_B(k)}{\lambda^2  - k^2},\hspace{3mm}\textup{for all} \, k \in \Gamma_n.
\end{align*}
\end{lemma}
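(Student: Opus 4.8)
The plan is to compute the principal and discrete Fourier transforms of $T_\lambda f = \widehat{f}_H(\lambda) b_\lambda - f * b_\lambda$ by using the fact that the Fourier transform (principal and discrete parts) converts convolution on $G$ restricted to the $(n,n)$-isotypic component into pointwise multiplication, together with the explicit values of $\widehat{b_\lambda}_H$ and $\widehat{b_\lambda}_B$ computed in Lemma \ref{fourier transform of b lambda}. First I would record that for $f \in L^1(G)_{n,n}$ and the $(n,n)$-spherical function $\phi^{n,n}_{\sigma,i\xi}$ (resp. the discrete-series matrix coefficient $\psi^{n,n}_k$), one has $(f * h)\widehat{\phantom{x}}_H(i\xi) = \widehat{f}_H(i\xi)\,\widehat{h}_H(i\xi)$ and $(f*h)\widehat{\phantom{x}}_B(k) = \widehat{f}_B(k)\,\widehat{h}_B(k)$, provided the transforms are defined; here $h = b_\lambda$, which by Lemma \ref{when b lambda is in L1} is a sum of an $L^1$ and an $L^p$ ($p<2$) function, so by Remark \ref{existence of principal transform of b lambda} its principal transform is a well-defined continuous function on $i\R$ and the convolution $f * b_\lambda$ again has well-defined principal and discrete transforms on $i\R$ and $\Gamma_n$.

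Granting this, the computation is immediate. For the principal part, at $i\xi$ with $\xi \in \R$:
\begin{align*}
\widehat{T_\lambda f}_H(i\xi) &= \widehat{f}_H(\lambda)\,\widehat{b_\lambda}_H(i\xi) - \widehat{f}_H(i\xi)\,\widehat{b_\lambda}_H(i\xi) \\
&= \bigl(\widehat{f}_H(\lambda) - \widehat{f}_H(i\xi)\bigr)\cdot \frac{1}{\lambda^2 + \xi^2},
\end{align*}
using $\widehat{b_\lambda}_H(i\xi) = (\lambda^2+\xi^2)^{-1}$ from Lemma \ref{fourier transform of b lambda}. Similarly, for $k \in \Gamma_n$, using $\widehat{b_\lambda}_B(k) = (\lambda^2 - k^2)^{-1}$,
\begin{align*}
\widehat{T_\lambda f}_B(k) &= \widehat{f}_H(\lambda)\,\widehat{b_\lambda}_B(k) - \widehat{f}_B(k)\,\widehat{b_\lambda}_B(k) = \frac{\widehat{f}_H(\lambda) - \widehat{f}_B(k)}{\lambda^2 - k^2}.
\end{align*}
This gives exactly the asserted formulas.

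The main obstacle is justifying the convolution theorem for the mixed object $f * b_\lambda$ where $b_\lambda$ is not itself in $L^1$ but only a sum of an $L^1$ piece and an $L^p$ piece supported away from $e$. I would handle this by splitting $b_\lambda = b_\lambda^{(1)} + b_\lambda^{(2)}$ with $b_\lambda^{(1)} \in L^1(G)_{n,n}$ and $b_\lambda^{(2)} \in L^p(G)_{n,n}$ for some $p < 2$ (Lemma \ref{when b lambda is in L1}(d)); then $f * b_\lambda^{(1)} \in L^1$ and the ordinary convolution-to-multiplication rule applies, while $f * b_\lambda^{(2)}$ lies in $L^p$ (Young's inequality, since $f \in L^1$) and one checks that pairing against $\phi^{n,n}_{\sigma,i\xi}$, $\psi^{n,n}_k$ still converts convolution to a product — here one uses that these eigenfunctions are bounded (property (4) of $\phi^{n,n}_{\sigma,\lambda}$ for $\lambda \in S_1$) and the integrals converge absolutely by the decay estimates of $\Delta(t)$ combined with Lemma \ref{lemma- estimates of b lambda}. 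The identity $\widehat{f}_H(\lambda)$ appearing as a constant (rather than as an evaluation of a transform of $b_\lambda$) comes from the first term $\widehat{f}_H(\lambda) b_\lambda$ of the definition, so no extra argument is needed there. A Fubini application to interchange the $G$-integration defining the transform with the convolution integral is the only other point requiring the absolute-convergence bookkeeping just described; everything else is formal.
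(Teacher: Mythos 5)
Your proposal is correct and follows essentially the same route as the paper, which simply remarks that the lemma ``follows directly from Lemma \ref{fourier transform of b lambda}'' via the convolution-to-multiplication property; you have merely filled in the $L^1+L^p$ splitting and Fubini bookkeeping that the paper leaves implicit.
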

 
\begin{lemma}\label{spherical property of b lambda}
Let $\lambda \in \C_+ \setm \textbf{B} .$ Then,
\begin{eqnarray*}\int_K b_\lambda(a_ska_t) e_n(k^{-1})dk=
\begin{cases}
b_\lambda(a_s)\phi_{\sigma,\lambda}^{n,n}(a_t)\hspace{3mm}\textup{if}
\hspace{1mm}s > t\geq 0,\\
b_\lambda(a_t)\phi_{\sigma,\lambda}^{n,n} 
(a_s)\hspace{3mm}\textup{if}
\hspace{1mm}t> s\geq 0.\\

\end{cases}
\end{eqnarray*}

\end{lemma}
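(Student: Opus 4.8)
The plan is to hold one of the two variables fixed and recognize the left‑hand side as an $(n,n)$-type eigenfunction of the Casimir operator $\Omega$ which is regular at the identity, and then to pin it down by uniqueness of such a solution: it must be a scalar multiple of $\phi_{\sigma,\lambda}^{n,n}$, and the scalar is read off by evaluating at $t=0$.

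Consider first the case $s>t\ge 0$. Fix $s>0$ and set $F_s(x):=\int_K b_\lambda(a_s k x)\,e_n(k^{-1})\,dk$, initially defined for those $x\in G$ with $a_s k x\notin K$ for all $k\in K$. A standard computation with the Cartan decomposition gives, for $g=a_s k_\theta a_t$, the identity $\cosh 2r(g)=\cos^2\theta\,\cosh 2(s+t)+\sin^2\theta\,\cosh 2(s-t)$, where $r(g)$ is the Cartan projection; hence $r(a_s k a_t)\ge |s-t|$, so $a_s k a_t\notin K$ for every $k\in K$ whenever $0\le t<s$. Since $b_\lambda$ is smooth on $G\setminus K$, the map $(k,t)\mapsto b_\lambda(a_s k a_t)$ is smooth on $K\times[0,s)$, and therefore $t\mapsto F_s(a_t)$ is smooth — in particular bounded near $0$ — on $[0,s)$. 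Next, the substitution $k\mapsto k(k')^{-1}$ in the defining integral, together with the right $K$-type $n$ of $b_\lambda$, shows $F_s(k'xk'')=e_n(k')e_n(k'')F_s(x)$, so $F_s$ is of $(n,n)$-type on its domain. Moreover $x\mapsto b_\lambda(a_s k x)$ is a left translate of $b_\lambda$, $\Omega$ is bi-invariant, and $b_\lambda$ solves \eqref{Omega equation} on $G\setminus K$; differentiating under the (compact) integral sign therefore gives $\Omega F_s=\tfrac{\lambda^2-1}{4}F_s$ in a neighbourhood of each $a_t$, $0\le t<s$. Consequently $t\mapsto F_s(a_t)$ satisfies the radial equation \eqref{Equation of eigen function of Pi {n,n}} on $(0,s)$.

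Near $t=0$ equation \eqref{Equation of eigen function of Pi {n,n}} has a regular singular point with indicial exponents $0,0$, so its solutions that are bounded near the origin form a one-dimensional space, spanned by $\phi_{\sigma,\lambda}^{n,n}(a_\cdot)$ (which is smooth and equals $1$ at $t=0$). Hence $F_s(a_t)=c\,\phi_{\sigma,\lambda}^{n,n}(a_t)$ on $[0,s)$ for some constant $c$, and evaluating at $t=0$ and using the right $K$-type $n$ of $b_\lambda$ gives $c=F_s(e)=\int_K b_\lambda(a_s k)e_n(k^{-1})\,dk=b_\lambda(a_s)\int_K e_n(k)e_n(k^{-1})\,dk=b_\lambda(a_s)$. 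This yields the first case. For the case $t>s\ge 0$ one argues verbatim with $G_t(x):=\int_K b_\lambda(x k a_t)\,e_n(k^{-1})\,dk$ in place of $F_s$: this is a right translate of $b_\lambda$, hence again of $(n,n)$-type and an $\Omega$-eigenfunction, it is regular at $s=0$ because $a_0 k a_t=k a_t\notin K$ for $t>0$, and the constant is evaluated using the left $K$-type $n$ of $b_\lambda$, giving $G_t(a_s)=b_\lambda(a_t)\phi_{\sigma,\lambda}^{n,n}(a_s)$ for $0\le s<t$.

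The only points that require genuine care are (i) that $b_\lambda$ stays smooth along the whole segment $\{a_t:0\le t<s\}$ rather than just for $t$ small — this is exactly what the estimate $r(a_s k a_t)\ge|s-t|$ secures — and (ii) that the boundedness of $F_s(a_t)$ at $t=0$ forces $F_s(a_\cdot)$ onto the regular solution $\phi_{\sigma,\lambda}^{n,n}$ and not the logarithmically singular one. Both are settled above. The remaining steps — the $K$-covariance of $F_s$ and $G_t$, pulling $\Omega$ through $\int_K$, and the evaluations of the constants — are routine.
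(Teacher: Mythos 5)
Your proof is correct and follows essentially the same route as the paper: fix one of the two Cartan parameters, observe that the resulting function of the other is an $(n,n)$-type $\Omega$-eigenfunction that is regular at the origin, invoke uniqueness of the regular solution of the radial equation \eqref{Equation of eigen function of Pi {n,n}} to identify it with a multiple of $\phi_{\sigma,\lambda}^{n,n}$, and evaluate at $0$ to read off the constant. You fill in two details the paper leaves implicit — the Cartan-projection bound $r(a_s k a_t)\ge|s-t|$ guaranteeing smoothness of the integrand, and the indicial-exponent analysis at $t=0$ — but the argument is the same one.
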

\begin{proof}
Since $b_\lambda$ is smooth outside K and  $a_ska_t \not\in K$  as $s \neq t$, the integral is well defined.
Fix $s > 0$, as $b_\lambda $ is a ${(n,n)}$ type eigenfunction of $\Omega$ on $G\setm K$ with eigenvalue $\frac{\lambda^2 -1}{4}$, the function \begin{equation*}
 g \mapsto \int_K b_\lambda(a_sk g) e_n(k^{-1})dk
\end{equation*} is smooth $(n,n)$ type eigenfunction of $\Omega$ on the open ball $B_s=\{k_1a_r k_2\in K \overline{A^+} K\mid r<s\}$. Hence the function $t \mapsto \int_K b_\lambda(a_ska_t) e_n(k^{-1})dk $ is a solution of \eqref{Equation of eigen function of Pi {n,n}} on $(0,s)$ which is regular at 0. Therefore,
\begin{equation*}
\int_Kb_\lambda(a_ska_t)e_n{(k^{-1})}dk=C\phi_{\sigma,\lambda}^{n,n}(a_t)\hspace
{3mm}\textup{for all}\hspace{1mm} 0\leq t<s \textup{ and for some constant C.}
\end{equation*}
Putting $t=0 $ in the equation above we get $C= b_\lambda(a_s)$. Therefore for $s> t \geq 0$ we have,
 $$\int_K b_\lambda(a_ska_t) e_n(k^{-1})dk =b_\lambda(a_s)\phi_{\sigma,\lambda}^{n,n}(a_t) \,.$$ Similarly the second case follows.
\end{proof}
Next we will show $T_\lambda f$ is in $L^1(G)_{n,n}$ and to do that we will use the following representation of $T_\lambda f$.
\begin{lemma}
Let $0 < \Re\lambda <1 $ and $f \in L^1(G)_{n,n}$. Then for all $t > 0$,
$$
T_\lambda f(a_t)=b_\lambda(a_t)\int_t^\infty f(a_s)\phi_{\sigma,\lambda}^{n,n}(a_s)\Delta(s)ds-\phi_{\sigma,\lambda}^{n,n}(a_t)
\int_t^\infty f(a_s)b_\lambda(a_s)\Delta(s)ds.
$$
\end{lemma}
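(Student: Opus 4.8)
The plan is to evaluate separately the two terms in
$T_\lambda f(a_t)=\widehat{f}_H(\lambda)\,b_\lambda(a_t)-(f*b_\lambda)(a_t)$ by reducing each to an integral over $A^+$ via the Cartan decomposition, collapsing the $K$-integrals with the type-$(n,n)$-eigenfunction identity of Lemma~\ref{spherical property of b lambda}, and then cancelling the common piece.

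First I would rewrite the principal transform in Cartan coordinates. Writing $x=k_1a_sk_2$, so that $x^{-1}=k_2^{-1}a_{-s}k_1^{-1}$, and using that both $f$ and $\phi_{\sigma,\lambda}^{n,n}$ are of type $(n,n)$ together with $\phi_{\sigma,\lambda}^{n,n}(a_{-s})=\phi_{\sigma,\lambda}^{n,n}(a_s)$, all the factors $e_n(k_i)$, $e_n(k_i^{-1})$ cancel in pairs, both $K$-integrals become trivial, and
$$\widehat{f}_H(\lambda)=\int_0^\infty f(a_s)\,\phi_{\sigma,\lambda}^{n,n}(a_s)\,\Delta(s)\,ds;$$
this converges absolutely since $|\phi_{\sigma,\lambda}^{n,n}|\le1$ on $S_1$ and $\int_0^\infty|f(a_s)|\Delta(s)\,ds=\|f\|_1$.

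Next I would compute $(f*b_\lambda)(a_t)=\int_G f(g)\,b_\lambda(g^{-1}a_t)\,dg$. With $g=k_1a_sk_2$ one has $f(g)=e_n(k_1)f(a_s)e_n(k_2)$ and, since $b_\lambda$ is left $K$-type $n$, $b_\lambda(g^{-1}a_t)=e_n(k_2^{-1})\,b_\lambda(a_{-s}k_1^{-1}a_t)$; the $k_2$-integral is trivial and, after the change $k_1\mapsto k_1^{-1}$,
$$(f*b_\lambda)(a_t)=\int_0^\infty f(a_s)\Big(\int_K b_\lambda(a_{-s}\,k\,a_t)\,e_n(k^{-1})\,dk\Big)\Delta(s)\,ds.$$
The crucial step is to identify the inner integral. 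Writing the Weyl element $w=k_{\pi/2}$, so that $a_{-s}=wa_sw^{-1}$, substituting $k\mapsto wk$ and using that $b_\lambda$ is left $K$-type $n$ and that $e_n$ is a character, the factors $e_n(w)$ and $e_n(w^{-1})$ cancel, giving $\int_K b_\lambda(a_{-s}ka_t)e_n(k^{-1})\,dk=\int_K b_\lambda(a_{s}ka_t)e_n(k^{-1})\,dk$, which by Lemma~\ref{spherical property of b lambda} equals $b_\lambda(a_s)\phi_{\sigma,\lambda}^{n,n}(a_t)$ for $s>t$ and $b_\lambda(a_t)\phi_{\sigma,\lambda}^{n,n}(a_s)$ for $t>s$ (the single value $s=t$ being irrelevant). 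Splitting the $s$-integral at $t$,
$$(f*b_\lambda)(a_t)=b_\lambda(a_t)\int_0^t f(a_s)\phi_{\sigma,\lambda}^{n,n}(a_s)\Delta(s)\,ds+\phi_{\sigma,\lambda}^{n,n}(a_t)\int_t^\infty f(a_s)b_\lambda(a_s)\Delta(s)\,ds.$$
Subtracting this from $\widehat{f}_H(\lambda)b_\lambda(a_t)=b_\lambda(a_t)\big(\int_0^t+\int_t^\infty\big)f(a_s)\phi_{\sigma,\lambda}^{n,n}(a_s)\Delta(s)\,ds$, the $\int_0^t$ pieces cancel and the asserted formula follows.

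The one genuinely delicate point is the legitimacy of the interchanges of integration that produce the collapsed formulas, since $f\in L^1(G)_{n,n}$ but $b_\lambda$ lies only in $L^1+L^p$ (Lemma~\ref{when b lambda is in L1}). I would argue for fixed $t>0$: as $a_t\notin K$, Lemma~\ref{spherical property of b lambda} shows the inner $K$-integral, viewed as a function of $s$, is bounded on $(0,t]$ (there it is $b_\lambda(a_t)\phi_{\sigma,\lambda}^{n,n}(a_s)$ with $|\phi_{\sigma,\lambda}^{n,n}|\le1$) and bounded on $[t,\infty)$ as well (there it is $b_\lambda(a_s)\phi_{\sigma,\lambda}^{n,n}(a_t)$, and $b_\lambda$ is continuous off $K$ and decays exponentially as $s\to\infty$ by the asymptotics of $\Phi_{\sigma,\lambda}^{n,n}$); hence the double integral is dominated by $C_{t,\lambda}\int_0^\infty|f(a_s)|\Delta(s)\,ds=C_{t,\lambda}\|f\|_1$, Fubini applies, and all the rearrangements are justified. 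Everything else — the pairwise cancellation of the characters $e_n$, the Weyl-conjugation identity $a_{-s}=wa_sw^{-1}$, and the final subtraction — is routine bookkeeping.
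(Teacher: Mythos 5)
Your proof is correct and follows essentially the same route as the paper: collapse the Cartan integrals using the $(n,n)$-type structure, invoke Lemma~\ref{spherical property of b lambda} after conjugating $a_{-s}$ to $a_s$ by the Weyl element (you use $w=k_{\pi/2}$ explicitly; the paper uses an abstract $k_0\in K$ with $k_0a_sk_0^{-1}=a_{-s}$), split the $s$-integral at $t$, and subtract. The extra paragraph you add on the legitimacy of the interchanges is more careful than the paper (which passes over Fubini silently), though strictly speaking domination requires controlling $\int_K|b_\lambda(a_ska_t)|\,dk$ rather than the collapsed $K$-integral with the character; this follows from the same continuity and decay facts you cite and does not affect the conclusion.
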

\begin{proof}
Here we are going to use the fact that there exists $ k_0 \in K$ such that $k_0a_s k_0^{-1} = a_{-s}$ for all $s \geq 0.$ Now
\begin{align}
f*b_\lambda(a_t) & = \int_K \int_0^\infty \int_K f(k_1 a_s k_2)b_\lambda (k_2^{-1} a_{-s} k_1^{-1} a_t) \Delta(s) dk_1 ds dk_2 \notag \\
& =  \int_0^\infty f(a_s)  \int_K  b_\lambda (a_{-s} k_1 a_t) e_n(k_1^{-1}) \Delta(s) dk_1 ds  \hspace{3mm}(\textup{ change of variable $k_1 \rightarrow k_1^{-1}$}) \notag \\
&=\int_0^\infty f(a_s)  \int_K  b_\lambda (k_0 a_s k_0^{-1} k_1 a_t)e_n(k_1^{-1}) \Delta(s) dk_1 ds \notag \\  
&=\int_0^\infty f(a_s)  \int_K b_\lambda ( a_s k_1 a_t)  e_n(k_1^{-1}) \Delta(s) dk_1 ds \hspace{3mm}(\textup{ change of variable $k_1 \rightarrow k_0 k_1$}) \notag \\
&=\int_0^t f(a_s) b_\lambda(a_t)\phi_{\sigma,\lambda}^{n,n}(a_s) \Delta(s) ds+ \int_t^\infty f(a_s) b_\lambda(a_s)\phi_{\sigma,\lambda}^{n,n}(a_t) \Delta(s) ds \label{To find Tlambda f 1}
\end{align}
The last line follows from Lemma \ref{spherical property of b lambda}. Next, \begin{align}
\what{f}_H (\lambda) b_\lambda(a_t)&= b_\lambda(a_t) \int_K \int_0^\infty \int_K f(k_1 a_s k_2) \phi_{\sigma,\lambda}^{n,n}((k_2^{-1} a_{-s} k_1^{-1}) \Delta(s) dk_1 ds dk_2 \notag \\
 &= b_\lambda(a_t) \int_0^\infty f(a_s) \phi_{\sigma,\lambda}^{n,n}(a_s) \Delta(s) ds \hspace{3mm}(\textup{Since $\phi_{\sigma,\lambda}^{n,n}(a_{-s})= \phi_{\sigma,\lambda}^{n,n}(a_s)$}) \,. \label{To find Tlambda f 2}
\end{align}
Putting the expressions above \eqref{To find Tlambda f 1} and \eqref{To find Tlambda f 2} in the definition of $T_\lambda f$ the result follows.\\
\end{proof}
Next we show $T_\lambda f$ is in $L^1(G)_{n,n}$ for $0 < \Re \lambda <1$ and find the estimates of $||T_\lambda f||_1$.
\begin{lemma}\label{L-1 estimates T-lambda-f}
Let $0<\Re\lambda<1$ and $f$ be a $(n,n)$ type integrable function on $G$. Then $T_\lambda f\in L^1(G)_{n,n}$ and moreover if $\lambda\notin B(0;1) \cup B(1;1)$, its $L^1$ norm satisfies, $$||T_\lambda f||_1\leq C||f||_1(1+|\lambda|) d(\lambda,\partial S_1)^{-1},$$  where $d(\lambda,\partial S_1)$ denotes the Euclidean distance of $\lambda$
from the boundary $\partial S_1$ of the strip $ S_1$. 
\end{lemma}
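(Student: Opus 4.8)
The plan is to run the estimate directly on the integral representation of $T_\lambda f$ just obtained. Writing
\[
P(t)=\int_t^\infty f(a_s)\phi_{\sigma,\lambda}^{n,n}(a_s)\Delta(s)\,ds,\qquad
Q(t)=\int_t^\infty f(a_s)b_\lambda(a_s)\Delta(s)\,ds,
\]
we have $T_\lambda f(a_t)=b_\lambda(a_t)P(t)-\phi_{\sigma,\lambda}^{n,n}(a_t)Q(t)$ for $t>0$, and since $T_\lambda f$ is of $(n,n)$ type, $\|T_\lambda f\|_1=\int_0^\infty |T_\lambda f(a_t)|\Delta(t)\,dt$. So it suffices to bound
\[
\mathrm{I}=\int_0^\infty |b_\lambda(a_t)|\,|P(t)|\,\Delta(t)\,dt,\qquad
\mathrm{II}=\int_0^\infty |\phi_{\sigma,\lambda}^{n,n}(a_t)|\,|Q(t)|\,\Delta(t)\,dt;
\]
finiteness of $\mathrm{I}+\mathrm{II}$ simultaneously yields $T_\lambda f\in L^1(G)_{n,n}$. (For this membership, which is claimed for every $\lambda$ with $0<\Re\lambda<1$, one uses the $\lambda$-dependent form of the estimates of $b_\lambda$ recorded in the remark after Lemma~\ref{lemma- estimates of b lambda}, since such $\lambda$ automatically stay away from the zeros of $c^{n,n}(-\lambda)$ and from the origin.)

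For the quantitative bound I will split both $\mathrm{I}$ and $\mathrm{II}$ at $t=1/2$ and use four ingredients, all already available: (i) $|\phi_{\sigma,\lambda}^{n,n}|\le 1$ on $S_1$, hence $|P(t)|\le\|f\|_1$ for every $t>0$; (ii) from Lemma~\ref{lemma- estimates of b lambda}, $|b_\lambda(a_t)|\le C\log(1/t)$ on $(0,1/2]$ and $|b_\lambda(a_t)|\le Ce^{-(\Re\lambda+1)t}$ on $[1/2,\infty)$, with $C$ independent of $\lambda$ precisely because $\lambda\notin B(0;1)\cup B(1;1)$ (which, inside the strip, forces $\lambda\notin\textbf{B}_1$); (iii) consequently $|b_\lambda(a_s)|\Delta(s)$ is bounded on $(0,1/2]$ and $\le Ce^{(1-\Re\lambda)s}$ on $[1/2,\infty)$, so that (using monotonicity of $\Delta$) $|Q(t)|\le C\|f\|_1\bigl(1+\Delta(t)^{-1}\bigr)$ on $(0,1/2]$ and $|Q(t)|\le C\int_t^\infty|f(a_s)|\Delta(s)e^{-(\Re\lambda+1)s}\,ds$ on $[1/2,\infty)$; and (iv) writing $\phi_{\sigma,\lambda}^{n,n}=c^{n,n}(\lambda)\Phi_{\sigma,\lambda}^{n,n}+c^{n,n}(-\lambda)\Phi_{\sigma,-\lambda}^{n,n}$ and inserting the asymptotics of $\Phi_{\sigma,\lambda}^{n,n}$ and $\Phi_{\sigma,-\lambda}^{n,n}$, one gets $|\phi_{\sigma,\lambda}^{n,n}(a_t)|\le C(1+|\lambda|)e^{(\Re\lambda-1)t}$ on $[1/2,\infty)$ — the polynomial factor accounting for the $\lambda$-dependence of the $c$-functions and the hypergeometric factors, harmless because $\lambda$ is bounded away from the points $0$ and $1$ — whence $|P(t)|\le C(1+|\lambda|)\int_t^\infty|f(a_s)|\Delta(s)e^{(\Re\lambda-1)s}\,ds$ on $[1/2,\infty)$. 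On $(0,1/2]$ these give $|T_\lambda f(a_t)|\Delta(t)\le C\|f\|_1\log(1/t)\Delta(t)+C\|f\|_1\bigl(1+\Delta(t)^{-1}\bigr)\Delta(t)$, which is integrable there and contributes $\le C\|f\|_1$.

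The heart of the argument is the range $[1/2,\infty)$, where $b_\lambda\notin L^1(G)$ and the two terms of $T_\lambda f$ must be kept together. Here $|b_\lambda(a_t)|\Delta(t)\le Ce^{(1-\Re\lambda)t}$ and $|\phi_{\sigma,\lambda}^{n,n}(a_t)|\Delta(t)\le C(1+|\lambda|)e^{(\Re\lambda+1)t}$ — both \emph{growing} in $t$ — while the tail integrals $P(t),Q(t)$ \emph{decay} at the matching rates. For the tail of $\mathrm{I}$ one has $|b_\lambda(a_t)|\,|P(t)|\,\Delta(t)\le C(1+|\lambda|)e^{(1-\Re\lambda)t}\int_t^\infty|f(a_s)|\Delta(s)e^{(\Re\lambda-1)s}\,ds$; integrating in $t$ and interchanging the order of integration, the inner integral $\int_{1/2}^s e^{(1-\Re\lambda)t}\,dt\le (1-\Re\lambda)^{-1}e^{(1-\Re\lambda)s}$ cancels the factor $e^{(\Re\lambda-1)s}$ exactly, leaving $\le C(1+|\lambda|)(1-\Re\lambda)^{-1}\|f\|_1$. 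The tail of $\mathrm{II}$ is handled the same way: $\int_{1/2}^s e^{(\Re\lambda+1)t}\,dt\le(\Re\lambda+1)^{-1}e^{(\Re\lambda+1)s}\le e^{(\Re\lambda+1)s}$ cancels $e^{-(\Re\lambda+1)s}$ and leaves $\le C(1+|\lambda|)\|f\|_1$. Adding the four pieces and using $(1-\Re\lambda)^{-1}\le d(\lambda,\partial S_1)^{-1}$ and $1\le (1+|\lambda|)d(\lambda,\partial S_1)^{-1}$ yields $\|T_\lambda f\|_1\le C\|f\|_1(1+|\lambda|)d(\lambda,\partial S_1)^{-1}$.

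The main obstacle is exactly this non-integrability of $b_\lambda$ for $0<\Re\lambda<1$: one cannot estimate the two terms of $T_\lambda f$ separately, and the bound survives only because the decay rates $\Re\lambda+1$ of $b_\lambda$ and $1-\Re\lambda$ of $\phi_{\sigma,\lambda}^{n,n}$ are matched against $\Delta(t)\asymp e^{2t}$ so that, after Fubini, the divergent exponentials cancel and leave the factor $(1-\Re\lambda)^{-1}$. A secondary but essential point is keeping every constant uniform in $\lambda$, which is where removal of the two unit balls $B(0;1)$ and $B(1;1)$ is used — both to apply Lemma~\ref{lemma- estimates of b lambda} with $\lambda$-independent constants and to keep $\lambda$ off the poles of $c^{n,n}(\pm\lambda)$ at $0$ and $1$ — the hypergeometric estimates then contributing only the polynomial factor $(1+|\lambda|)$. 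Finally, near the identity $b_\lambda$, and hence $T_\lambda f$, has a logarithmic singularity, but this is integrable against $\Delta(t)\,dt$ because $\log(1/t)\,\Delta(t)$ stays bounded as $t\to 0$.
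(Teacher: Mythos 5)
The paper itself gives no proof here beyond citing \cite[Lemma 4.4]{PS}, so you have supplied the missing argument, and you do so along exactly the lines that reference follows: split $T_\lambda f(a_t)=b_\lambda(a_t)P(t)-\phi_{\sigma,\lambda}^{n,n}(a_t)Q(t)$ at $t=1/2$, use Lemma \ref{lemma- estimates of b lambda} for $b_\lambda$ on each piece, and on $[1/2,\infty)$ swap the order of integration so that the growing exponential from $b_\lambda\Delta$ (resp.\ $\phi_{\sigma,\lambda}^{n,n}\Delta$) cancels exactly against the decaying one inside $P$ (resp.\ $Q$), leaving the factor $(1-\Re\lambda)^{-1}=d(\lambda,\partial S_1)^{-1}$. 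Your observation that for $\lambda$ in the open strip minus $B(0;1)\cup B(1;1)$ one automatically lies in $\C_+\setminus\textbf{B}_1$, so the constants in Lemma \ref{lemma- estimates of b lambda} are $\lambda$-independent, and that for arbitrary $\lambda$ in the open strip one only gets the $\lambda$-dependent constants of the Remark (which still give membership in $L^1$), are both correct and correctly placed. The two parenthetical reductions you use for $Q$ on $(0,1/2]$ --- bounding the middle-range part of the integral by $\|f\|_1$ via $e^{(1-\Re\lambda)s}\le C e^{2s}$ for $s\ge 1/2$, and bounding the near-$0$ part by $\Delta(t)^{-1}\|f\|_1$ via monotonicity of $\Delta$ --- both check out.

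The one step you state but do not fully justify is the bound $|\phi_{\sigma,\lambda}^{n,n}(a_t)|\le C(1+|\lambda|)e^{(\Re\lambda-1)t}$ \emph{uniformly} on $t\in[1/2,\infty)$, not merely asymptotically as $t\to\infty$. The paper only records $\Phi_{\sigma,\lambda}^{n,n}(a_t)=e^{-(\lambda+1)t}(1+O(1))$ as $t\to\infty$ and the pointwise bound $|\phi_{\sigma,\lambda}^{n,n}|\le 1$ on $S_1$; the latter is useless here, since $|b_\lambda(a_t)|\Delta(t)$ is \emph{not} integrable over $[1/2,\infty)$ for $0<\Re\lambda<1$, so one genuinely needs the exponential decay of $\phi_{\sigma,\lambda}^{n,n}$ with control of the implied constant for $t\ge 1/2$ and $\lambda$ outside the excluded balls. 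Obtaining this from $\phi_{\sigma,\lambda}^{n,n}=c^{n,n}(\lambda)\Phi_{\sigma,\lambda}^{n,n}+c^{n,n}(-\lambda)\Phi_{\sigma,-\lambda}^{n,n}$ requires a uniform bound on the hypergeometric factor ${}_2F_1\!\left(\frac{1\mp\lambda+|n|}{2},\frac{1\mp\lambda-|n|}{2};1\mp\lambda;\cosh^{-2}t\right)$ for $t\in[1/2,\infty)$, say via the integral representation \eqref{integral representation of hypergeometric function} together with Stirling-type bounds on the $c$-functions (Lemma \ref{lemma- estimates of b lambda}(ii) only covers the $\Phi_{\sigma,\lambda}^{n,n}$ piece through $b_\lambda$, and only for $\lambda\in\C_+$, so it does not directly handle $\Phi_{\sigma,-\lambda}^{n,n}$). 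This is precisely the point established in \cite{PS}, and it is why this lemma is cited rather than reproved; your proof should either carry out this estimate or flag it as the place where \cite{PS} is invoked. Everything else is sound.
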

\begin{proof} Proof of the Lemma above follows exactly in the same line as \cite[Lemma 4.4]{PS}.
\end{proof}
Now we summarize the necessary properties of the resolvent transform.
\begin{lemma} \label{properties of resolvent transform}
Assume $g\in L^\infty(G)_{n,n}$ annihilates $I$ and fix a function $f\in I$. Let $Z(\widehat{f}_H):=\{z\in S_1:\widehat{f}_H(z)=0\}$. Then

\noindent (a) $\mathcal{R}[g](\lambda)$ is an even holomorphic function on $\C $. It is given by the following formula :
\begin{eqnarray*}
\mathcal{R}[g](\lambda)=
\begin{cases} 
\langle b_\lambda,g\rangle,\hspace{3mm}\Re \lambda>1, \lambda \not\in \textbf{B}\\
\frac{\langle T_\lambda f,g\rangle}{\widehat{f}_H(\lambda)}, \hspace{3mm}0<\Re\lambda<1, \lambda\notin Z(\widehat{f}_H).
\end{cases}
\end{eqnarray*}
\noindent (b) For $|\Re\lambda|>1$, $\left|\mathcal{R}[g](\lambda)\right|\leq C||g||_\infty\frac{(1+|\lambda|)}{d(\lambda,\partial S_1)},$

\noindent (c) For $|\Re\lambda|<1$, $\left|\widehat{f}_H(\lambda)\mathcal{R}[g](\lambda)\right|\leq C||f||_1||g||_\infty\frac{(1+|\lambda|)}{d(\lambda,\partial S_1)}$, where the constant $C$ is independent of $f\in I$. 
\end{lemma}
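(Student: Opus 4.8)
The plan is to establish the three assertions of Lemma \ref{properties of resolvent transform} in order, using the machinery assembled in the previous sections. First, part (a): the fact that $\lambda\mapsto\mathcal{R}[g](\lambda)$ is an even holomorphic function on all of $\C$ is immediate, since by \eqref{definition of B-lambda} the map $\lambda\mapsto B_\lambda$ is a $(L^1_\delta(G)_{n,n}/I)$-valued even holomorphic function (the inverse of $\delta-(\lambda_0^2-\lambda^2)b_{\lambda_0}+I$ depends holomorphically on $\lambda$ since inversion is holomorphic on the invertibles and the argument never hits a non-invertible element), and $g$ is a bounded linear functional on the quotient. So only the two representative formulas need to be proved. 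For the range $\Re\lambda>1$, $\lambda\notin\textbf{B}$, I would show $B_\lambda=b_\lambda+I$: by Lemma \ref{when b lambda is in L1}(b), $b_\lambda\in L^1(G)_{n,n}$; by Lemma \ref{fourier transform of b lambda} together with Remark \ref{fourier transform of b lambda in S1}, $\widehat{b_\lambda}_H(z)=(\lambda^2-z^2)^{-1}$ on $S_1$ and $\widehat{b_\lambda}_B(k)=(\lambda^2-k^2)^{-1}$ on $\Gamma_n$, so a direct Fourier-transform computation gives $(\delta-(\lambda_0^2-\lambda^2)b_{\lambda_0})*b_\lambda$ and $b_{\lambda_0}$ the same image in the quotient (their Fourier transforms agree on $S_1\cup\Gamma_n$ after multiplying through, hence they differ by an element of $I$ since the common-zero hypothesis forces the closed ideal with those Fourier conditions to be everything); multiplying by the inverse yields $B_\lambda=b_\lambda+I$, whence $\mathcal{R}[g](\lambda)=\langle b_\lambda,g\rangle$.

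For the strip $0<\Re\lambda<1$, $\lambda\notin Z(\widehat{f}_H)$, I would check that $B_\lambda=\frac{1}{\widehat{f}_H(\lambda)}(T_\lambda f+I)$, where $T_\lambda f$ is the $L^1(G)_{n,n}$ function of Lemma \ref{L-1 estimates T-lambda-f}. The verification is again at the level of Fourier transforms: using Lemma \ref{Fourier transform of T lambda f}, $\widehat{T_\lambda f}_H(i\xi)=\frac{\widehat{f}_H(\lambda)-\widehat{f}_H(i\xi)}{\lambda^2+\xi^2}$ and $\widehat{T_\lambda f}_B(k)=\frac{\widehat{f}_H(\lambda)-\widehat{f}_B(k)}{\lambda^2-k^2}$, so $\frac{1}{\widehat{f}_H(\lambda)}T_\lambda f$ has the same Fourier data, modulo $\widehat{f}_H$ and $\widehat{f}_B$ (which vanish on $I$'s common locus... here $f\in I$ so these are genuine functions), as a candidate $b_\lambda$; more precisely one checks $(\delta-(\lambda_0^2-\lambda^2)b_{\lambda_0})*\frac{1}{\widehat{f}_H(\lambda)}T_\lambda f$ and $b_{\lambda_0}$ have Fourier transforms differing by a multiple of $\widehat{f}_H$ (resp. $\widehat{f}_B$), hence by an element of $I$ since $f\in I$, so they are equal in $L^1_\delta(G)_{n,n}/I$. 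Consistency with the first formula on the overlap — there is no overlap since one region is $\Re\lambda>1$ and the other $\Re\lambda<1$ — is not needed, but holomorphy of both representatives together with holomorphy of $B_\lambda$ across the line $\Re\lambda=1$ (away from $\textbf{B}$ and $Z(\widehat{f}_H)$) is what ties the pieces together; one should remark that the formula $\langle T_\lambda f,g\rangle/\widehat{f}_H(\lambda)$ is independent of the choice of $f\in I$ with $\widehat{f}_H(\lambda)\neq0$, which follows because $B_\lambda$ itself does not depend on $f$.

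Parts (b) and (c) are then immediate consequences of (a) together with the $L^1$-norm estimates already in hand. For (b), when $\Re\lambda>1$ and $\lambda\notin\textbf{B}_1$, Lemma \ref{L-1 norm estimates of b-lambda}(i) gives $\|b_\lambda\|_1\leq C(1+|\lambda|)/(\Re\lambda-1)$, and since $\Re\lambda-1=d(\lambda,\partial S_1)$ in this region, $|\mathcal{R}[g](\lambda)|=|\langle b_\lambda,g\rangle|\leq\|b_\lambda\|_1\|g\|_\infty\leq C\|g\|_\infty(1+|\lambda|)/d(\lambda,\partial S_1)$; for $\lambda$ in the excluded balls $\textbf{B}_1$ one uses that $\mathcal{R}[g]$ is holomorphic there (by part (a) and the evenness/holomorphy established above) together with the maximum principle, comparing with the bound on the boundary of a slightly larger region — the balls have fixed radius so this only changes the constant. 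The case $\Re\lambda<-1$ follows by evenness, $\mathcal{R}[g](\lambda)=\mathcal{R}[g](-\lambda)$. For (c), when $0<\Re\lambda<1$, $|\widehat{f}_H(\lambda)\mathcal{R}[g](\lambda)|=|\langle T_\lambda f,g\rangle|\leq\|T_\lambda f\|_1\|g\|_\infty$, and Lemma \ref{L-1 estimates T-lambda-f} bounds $\|T_\lambda f\|_1\leq C\|f\|_1(1+|\lambda|)d(\lambda,\partial S_1)^{-1}$ for $\lambda\notin B(0;1)\cup B(1;1)$, with the constant independent of $f$; the excluded balls near $0$ and $1$ (and symmetrically near $-1$, using evenness) are again handled by the maximum principle applied to the holomorphic function $\widehat{f}_H(\lambda)\mathcal{R}[g](\lambda)$.

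The main obstacle is the bookkeeping in part (a): correctly identifying the cosets $B_\lambda$ with $b_\lambda+I$ and with $\frac{1}{\widehat{f}_H(\lambda)}T_\lambda f+I$ requires knowing that two $L^1_\delta(G)_{n,n}$ elements whose Fourier transforms (principal and discrete parts) agree on all of $S_1\cup\Gamma_n$ differ by an element of $I$ — but this needs the common-zero hypothesis on $I$ only insofar as one is comparing against elements built from $f\in I$, so the relevant statement is rather that an element of $L^1_\delta(G)_{n,n}$ whose Fourier transform is divisible (in the appropriate sense) by $\widehat{f}_H$ and $\widehat{f}_B$ lies in the closed ideal generated by $f$, which is part of $I$. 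Pinning down exactly which vanishing/divisibility statement is needed, and citing the Banach-algebraic fact that guarantees it (the maximal ideal space description of $L^1_\delta(G)_{n,n}$ already recorded, plus that these homomorphisms separate points so equality of all Fourier transforms forces equality in the algebra), is the delicate part; everything after that is routine estimation with the maximum principle to fill in the finitely many excluded discs.
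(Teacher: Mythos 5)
Your plan is sound in outline: you have the right coset representatives ($b_\lambda+I$ for $\Re\lambda>1$ and $\frac{1}{\widehat f_H(\lambda)}T_\lambda f+I$ for $0<\Re\lambda<1$), you invoke the correct $L^1$-norm estimates from Lemmas~\ref{L-1 norm estimates of b-lambda} and~\ref{L-1 estimates T-lambda-f}, and you handle the finitely many excluded discs using holomorphy of $\mathcal R[g]$. (The paper closes those gaps a little more cheaply than by the maximum principle: since $\mathcal R[g]=\langle B_\cdot,g\rangle$ is entire and the excluded set is bounded, $\mathcal R[g]$ is bounded there with bound linear in $\|g\|_\infty$, and the right-hand side $(1+|\lambda|)/d(\lambda,\partial S_1)$ is bounded below on that set, so the estimate holds trivially; both arguments work.)

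The genuine misstep is your last paragraph, where you locate the ``delicate part'' of (a) in a divisibility/spectral-synthesis statement (``an element of $L^1_\delta(G)_{n,n}$ whose Fourier transform is divisible by $\widehat f_H$ and $\widehat f_B$ lies in the closed ideal generated by $f$''). That statement is not needed, would be hard to justify, and is not what is actually used. For Case 1 the hypothesis on $I$ plays no role at all: $(\delta-(\lambda_0^2-\lambda^2)b_{\lambda_0})*b_\lambda$ and $b_{\lambda_0}$ are both honest elements of $L^1_\delta(G)_{n,n}$ (by Lemma~\ref{when b lambda is in L1}) with identical principal and discrete Fourier transforms, hence they are \emph{equal} as $L^1_\delta$ functions by injectivity of the Fourier transform, and one merely passes to cosets. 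Likewise your phrase in Case 1 ``hence they differ by an element of $I$ since the common-zero hypothesis forces the closed ideal with those Fourier conditions to be everything'' is both incorrect and unnecessary. For Case 2 the same injectivity gives the explicit identity $(\delta-(\lambda_0^2-\lambda^2)b_{\lambda_0})*\frac{T_\lambda f}{\widehat f_H(\lambda)} = b_{\lambda_0} - \frac{1}{\widehat f_H(\lambda)}\,f*b_{\lambda_0}$ in $L^1_\delta(G)_{n,n}$; the correction term $\frac{1}{\widehat f_H(\lambda)}f*b_{\lambda_0}$ lies in $I$ simply because $I$ is an ideal and $f\in I$, not because of any divisibility theorem. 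So the only Banach-algebraic inputs are injectivity of the Fourier transform on $L^1_\delta(G)_{n,n}$ and the ideal property of $I$ under convolution; once that is seen, part (a) is a routine computation.
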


\begin{proof}
(a) \textbf{CASE-1 } : Let $\Re \lambda >1 $ and $\lambda \not\in \textbf{B}$ then by (\ref{when b lambda is in L1}) $b_\lambda$ is in $L^1(G)_{n,n}$. 
For $z \in S_1$ we have from Lemma \ref{fourier transform of b lambda} and \ref{fourier transform of b lambda in S1},
\begin{align*}
&\frac{1}{\widehat{b}_{\lambda_0 H}(z)}-\frac{1}{\widehat{b}_{\lambda H}(z)}=\lambda_0^2 -\lambda^2\\
&\hspace{-.4 in} \textup{so}, \quad \left(1-(\lambda_0^2 -\lambda^2)
\widehat{b}_{\lambda_0 H}(z)\right)\widehat{b}_{\lambda H}(z)=\widehat{b}_{\lambda_0 H}(z).
\end{align*} 
Similarly for $k \in \Gamma_n$ we have,
$$\left(1-(\lambda_0^2 -\lambda^2)
\widehat{b}_{\lambda_0 B}(k)\right)\widehat{b}_{\lambda B}(k)=\widehat{b}_{\lambda_0 B}(k).$$
So $$\left( \delta -({\lambda_0}^2- \lambda^2) b_{\lambda_0}(\cdot)  \right) b_\lambda(\cdot) =b_{\lambda_0}(\cdot)$$ as $L^1_\delta(G)_{n,n}$ functions. Hence in the quotient algebra $L^1_\delta(G)_{n,n} /I$,
\begin{equation}\label{3.3}
\left(\delta-(\lambda_0^2 - \lambda^2)
b_{\lambda_0}+I\right)*(b_\lambda+I) =b_{\lambda_0}+I,
\end{equation}
Now $(\delta-(\lambda_0^2 - \lambda^2)b_{\lambda_0}+I )$ is invertible in  $L^1_\delta(G)_{n,n} /I$ so from
 (\ref{definition of B-lambda}) and (\ref{3.3})we get $B_\lambda = b_\lambda + I$ Therefore by the definition of $\mathcal{R}[g](\lambda)$, $$\mathcal{R}[g](\lambda) =\left\langle b_\lambda,g\right \rangle.$$
 
 \textbf{CASE-2 } : Let  $0<\Re\lambda<1$, $\lambda\notin Z(\widehat{f}_H)$. Then by Lemma \ref{L-1 estimates T-lambda-f} $T_\lambda f$ is in $L^1(G)_{n,n}$. 
  Similarly  as in previous case we have from Lemma \ref{Fourier transform of T lambda f},
\begin{align*}
 \quad \left(1-(\lambda_0^2 -\lambda^2)
\widehat{b}_{\lambda_0 H}(z)\right) \dfrac{\what{T_\lambda f}_H(z)}{\what{f}_H (\lambda)} = \widehat{b}_{\lambda_0 H}(z) - \dfrac{\what{f}_H(z) \what{b_{\lambda_0 }}_H(z)}{\what{f}_H(\lambda)} \hspace{3mm} \textup{ for all $z \in S_1$}
\end{align*}
 and
 \begin{align*}
\quad \left(1-(\lambda_0^2 -\lambda^2)
\widehat{b}_{\lambda_0 B}(k)\right) \dfrac{\what{T_\lambda f}_B(k)}{\what{f}_H (\lambda)} = \widehat{b}_{\lambda_0 B}(k) - \dfrac{\what{f}_B(k) \what{b_{\lambda_0 }}_B(k)}{\what{f}_H(\lambda)} \hspace{3mm} \textup{ for all $k \in  \Gamma_n.$} 
\end{align*}
Therefore $$
\left(\delta-({\lambda_0}^2-\lambda^2)
b_{\lambda_0}(\cdot) \right)\left(\frac{T_\lambda f(\cdot)}{\widehat{f}_H(\lambda)} \right) =b_{\lambda_0}(\cdot)- \dfrac{f(\cdot) b_{\lambda_0}(\cdot)}{\what{f}_H(\lambda}
$$ in $L^1_\delta(G)_{n,n}$. Since $f \in I $ 
\begin{equation}
\left(\delta-(\lambda_0^2-\lambda^2)
b_{\lambda_0}+I\right)*\left(\frac{T_\lambda f}{\widehat{f}(\lambda)}+I\right) =b_{\lambda_0}+I.
\end{equation}
Again from \eqref{definition of B-lambda} and the equation above $$B_\lambda=\frac{T_\lambda f}{\widehat{f}(\lambda)}+I,$$ which implies  $$\mathcal{R}[g](\lambda) =\dfrac{\left\langle {T_\lambda f}, g \right\rangle}{\widehat{f}(\lambda)}.$$

(b) Since $\mathcal{R}[g](\lambda)$ is even we only need to consider the case $\Re \lambda >1$. For $\Re \lambda >1 $  and  $\lambda \not\in \textbf{B}_1$ we have from Lemma \ref{L-1 norm estimates of b-lambda},
$$||b_\lambda||_1\leq C\frac{(1+|\lambda|)}{d(\lambda,\partial S_1)}
\hspace{3mm} \textup{for some $C>0$}.$$
Now from (\ref{defn-R-g}) it follows that $\mathcal{R}[g](\lambda)$ is bounded on $\textbf{B}_1$. Hence $$\left|\mathcal{R}[g](\lambda)\right|\leq C||g||_\infty\frac{(1+|\lambda|)}{d(\lambda,\partial S_1)}.$$\\
(c)  From Lemma \ref{L-1 estimates T-lambda-f} we get for $0 < \Re \lambda <1 $ and $\lambda \not\in B(0 ;1 ) \cup B(1 ; 1)$,
$$\left|\widehat{f}_H(\lambda)\mathcal{R}[g](\lambda)\right|\leq C||f||_1||g||_\infty\frac{(1+|\lambda|)}{d(\lambda,\partial S_1)}.$$ 
 Since $\widehat{f}_H(\lambda)\mathcal{R}[g](\lambda)$ is an even continuous function on $ S_1$, the same estimate is true for $0<|\Re\lambda|<1, \lambda\not\in   B(0;1) \cup B(1; 1 )$. Now from (\ref{defn-R-g}) it follows that $\mathcal{R}[g](\lambda)$ is bounded on $  B(0 ; 1 ) \cup B(1 ;1) $ with bound independent of $f$, Therefore for $0<|\Re\lambda|<1$ and $ \lambda \in B(0 ;1 ) \cup B(1 ; 1) $ , $$\left|\widehat{f}(\lambda)\mathcal{R}[g](\lambda)\right|\leq C||f||_1 ,$$ where $C$ is independent of $f$ and $\lambda$. So we have  for $0<|\Re\lambda|<1$, 
$$\left|\widehat{f}_H(\lambda)\mathcal{R}[g](\lambda)\right|\leq C||f||_1||g||_\infty\frac{(1+|\lambda|)}{d(\lambda,\partial S_1)}.$$
Finally the constant in the inequality above is independent of $f$ so by continuity of $\mathcal{R}[g]$ and $\what{f}$ the lemma follows.
\end{proof}
\section{\textbf{Results from complex analysis}} 
For any function $F$ on $i\R$, we let $$\delta_\infty^+(F)=-\limsup_{t\rightarrow\infty} e^{-\frac{\pi}{2}t}\log|F(it)| \,  \,  \textup{ and} \,  \, \delta_\infty^-(F)=-\limsup_{t\rightarrow\infty} e^{-\frac{\pi}{2 }t}\log|F(-it)|.$$
Next from \cite[Theorem 6.3]{PS} we have the following theorem.
 \begin{theorem}\label{Complex analysis Theorem}
 Let $M:(0,\infty)\rightarrow (e,\infty)$ be a continuously differentiable decreasing function with 
$$
\lim_{t\rightarrow 0^+}t\log\log M(t)<\infty,\hspace{3mm}\int_0^\infty\log\log M(t)dt<\infty.
$$
Let $\Omega$ be a collection of bounded holomorphic functions on $ S_1^0$ such that 
$$
\inf_{F\in \Omega}\delta^+_\infty(F)=\inf_{F\in \Omega}\delta^-_\infty(F)=0.
$$ 
Suppose $H$ satisfies the following estimates for some nonnegative integer N:
\begin{eqnarray*}
|H(z)|&\leq &(1+|z|)^NM\left(d(z,\partial S_1)\right),
\hspace{3mm}z\in\mathbb{C}\setminus S_1,\\
|F(z)H(z)|&\leq & (1+|z|)^NM\left(d(z,\partial S_1)\right),
\hspace{3mm}z\in S_1^0,
\hspace{1mm}\textup{for all}\hspace{1mm}F\in \Omega.
\end{eqnarray*}
\begin{enumerate}
\item[1.]If in addition, H is a holomorphic function on $S_1 \setm \{ \pm 1 \}$ then H is dominated by a polynomial outside a bounded neighbourhood of $\{ \pm 1\}.$
\item[2.] If H is an entire function, then it is a polynomial.
\end{enumerate}
\end{theorem}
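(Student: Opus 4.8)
The plan is to prove this by combining a Levinson--Sjöberg ``log--log'' theorem for subharmonic functions (in the form due to Domar) with the Phragmén--Lindelöf principle in a strip. I would treat the exterior $\mathbb C\setminus S_1$ and the interior $S_1^0$ separately and then glue the resulting estimates along $\partial S_1$; the hypothesis $\inf_F\delta_\infty^{\pm}(F)=0$ enters only to control $H$ near the two ends $\pm i\infty$ of the strip.

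First, on $\mathbb C\setminus S_1$, which is the union of the half-planes $\{\Re z>1\}$ and $\{\Re z<-1\}$, the function $\log|H|$ is subharmonic and satisfies $\log|H(z)|\le N\log(1+|z|)+\log M(|\Re z|-1)$. Since $M$ is decreasing the $\log M$-term is bounded for $|\Re z|\ge 2$, and the sublinear growth $N\log(1+|z|)$ is harmless for Phragmén--Lindelöf in a half-plane, so Domar's theorem --- this is exactly where the hypotheses $\int_0^\infty\log\log M(t)\,dt<\infty$ and $\lim_{t\to0^+}t\log\log M(t)<\infty$ are used, to absorb the boundary blow-up as $\Re z\to\pm1$ --- yields $|H(z)|\le C(1+|z|)^{N_1}$ on $\overline{\mathbb C\setminus S_1}$ outside fixed balls $B(1;r)\cup B(-1;r)$; the points $\pm1$ are genuine singularities of $H$ in case~1 and must be excluded. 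In particular $H$ is polynomially bounded on $\partial S_1\setminus\big(B(1;r)\cup B(-1;r)\big)$.

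Next, on $S_1^0$, fix $F\in\Omega$; then $\log|FH|$ is subharmonic with $\log|FH(z)|\le N\log(1+|z|)+\log M(1-|\Re z|)$, and along every vertical line the $\log M$-term is bounded, so again the only blow-up is as $\Re z\to\pm1$. Domar's theorem applies, and since the least harmonic majorant of the right-hand side depends only on $M$ and $N$, one gets $|F(z)H(z)|\le C(1+|z|)^{N'}$ on $S_1^0$, uniformly in $F\in\Omega$. To pass from $FH$ to $H$, I would fix a small $\varepsilon>0$ and pick $F\in\Omega$ with $\delta_\infty^{+}(F)<\varepsilon$, and (by multiplying with a second such function) also $\delta_\infty^{-}(F)<\varepsilon$: this forces $|F(\pm it)|\ge e^{-\varepsilon e^{\pi t/2}}$ along a sequence $t\to\infty$, and inserting this into the boundary (Poisson--Jensen) representation of the bounded holomorphic function $F$ on the strip --- equivalently, studying the non-negative superharmonic function $-\log|F|$ --- should give $-\log|F(z)|\le\varepsilon' e^{(\pi/2)|\Im z|}+O(\log|z|)$ near $\pm i\infty$ in $S_1^0$, with $\varepsilon'<1$. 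Combined with the uniform estimate for $FH$, this yields $\log|H(z)|=\log|FH(z)|-\log|F(z)|\le\varepsilon' e^{(\pi/2)|\Im z|}+O(\log|z|)$ on $S_1^0$, i.e.\ growth strictly below the Phragmén--Lindelöf threshold $\exp\big(c\,e^{(\pi/2)|\Im z|}\big)$, $c<1$, for the strip $\{|\Re z|<1\}$. Feeding this, together with the polynomial bound on $\partial S_1\setminus\big(B(1;r)\cup B(-1;r)\big)$, into the Phragmén--Lindelöf principle on the half-strips $\{|\Re z|<1,\ \pm\Im z>R_0\}$ (and using compactness on the remaining bounded region) yields $|H(z)|\le C(1+|z|)^{N_2}$ on $\overline{S_1^0}\setminus\big(B(1;r)\cup B(-1;r)\big)$. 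Together with the exterior estimate this is conclusion~1.

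Conclusion~2 follows at once: an entire $H$ is in particular holomorphic on $S_1\setminus\{\pm1\}$, so by part~1 it is dominated by a polynomial outside $B(1;r)\cup B(-1;r)$; being entire it is also bounded on those balls, hence $|H(z)|\le C(1+|z|)^{N}$ for every $z$, and an entire function of polynomial growth is a polynomial (apply the Cauchy estimates to its Taylor coefficients). The step I expect to be the main obstacle is the passage from $FH$ to $H$ in the interior: turning the one-sided $\limsup$ information along $i\mathbb R$ encoded in $\delta_\infty^{\pm}(F)<\varepsilon$ into a genuine sub-threshold growth bound for $\log|H|$ on all of $S_1^0$ requires a careful potential-theoretic analysis of the bounded holomorphic function $F$ on the strip (controlling the singular inner factor concentrated at the ends $\pm i\infty$); the remaining ingredients --- Domar's theorem and Phragmén--Lindelöf --- are essentially routine.
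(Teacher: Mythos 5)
The paper itself contains no proof of this theorem: it is quoted verbatim from \cite[Theorem~6.3]{PS}, which in turn rests on the resolvent-transform machinery of Ben Natan--Benyamini--Hedenmalm--Weit \cite{Ben}. So there is no in-paper argument for you to have reproduced; what you have written can only be assessed against the argument in \cite{PS} and \cite{Ben}.

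At the level of strategy your sketch matches that argument: a Domar-type log-log estimate is applied separately to $\log|H|$ on $\mathbb C\setminus S_1$ and to $\log|FH|$ on $S_1^0$ (the polynomial factor $N\log(1+|z|)$ is absorbed by localizing to unit disks, which you gloss over but which is routine), and then the bound on $FH$ is converted into a bound on $H$ by controlling $-\log|F|$ from above, after which Phragm\'en--Lindel\"of in the two half-strips and the trivial reduction of part~2 to part~1 finish the proof. You also correctly identify the crux: the conversion of the one-sided $\limsup$ information in $\delta^{\pm}_\infty(F)<\varepsilon$ into a genuine growth bound $-\log|F(z)|\le(\varepsilon+o(1))e^{\pi|\Im z|/2}$ on a sub-strip $\{|\Re z|\le 1/2\}$. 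This is exactly where \cite{Ben} invoke the Nevanlinna (or Riesz--Martin) decomposition of the nonnegative superharmonic function $-\log|F|$: after a conformal map of $S_1^0$ onto the disk, the ends $\pm i\infty$ go to two boundary points $p_\pm$, the Martin kernel at $p_\pm$ restricted to the strip is $\asymp e^{\pi|\Im z|/2}\cos(\tfrac{\pi}{2}\Re z)$, and the $\limsup$ condition along $i\mathbb R$ forces the mass of the Martin/singular measure at $p_\pm$ to be at most $\varepsilon$; the contributions of the Green potential (Blaschke factor) and of the rest of the Martin boundary are then shown to be $o(e^{\pi|\Im z|/2})$ uniformly on $|\Re z|\le 1/2$. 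You flag this step as ``the main obstacle'' and give it only as a heuristic; as written, your proposal is therefore a correct outline rather than a complete proof of this lemma.

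One concrete slip to correct: you propose ``multiplying with a second such function'' to obtain a single $F\in\Omega$ with both $\delta^{+}_\infty(F)<\varepsilon$ and $\delta^{-}_\infty(F)<\varepsilon$. This does not follow. With $a_t=e^{-\pi t/2}\log|F_1(it)|$ and $b_t=e^{-\pi t/2}\log|F_2(it)|$ one has $\limsup(a_t+b_t)\ge\limsup a_t+\liminf b_t$, and $\liminf b_t$ may be $-\infty$, so $\delta^{+}_\infty(F_1F_2)$ can be large even if $\delta^{+}_\infty(F_1)<\varepsilon$. (Moreover, $F_1F_2$ need not lie in the given collection $\Omega$, so the hypothesis $|F H|\le(1+|z|)^N M$ would not apply to it.) The standard fix, and what is actually done in \cite{Ben}/\cite{PS}, is to handle the two ends of the strip separately: choose $F_1$ with $\delta^{+}_\infty(F_1)<\varepsilon$ to bound $-\log|F_1|$ near $+i\infty$ and run Phragm\'en--Lindel\"of on the upper half-strip, and choose $F_2$ with $\delta^{-}_\infty(F_2)<\varepsilon$ for the lower half-strip; the Martin mass of $-\log|F_1|$ at $-i\infty$ is a finite constant and contributes only $O(e^{-\pi\Im z/2})$ in the upper half-strip, so it causes no harm.
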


\section{\textbf{Proof of W-T Theorem for $L^1(G)_{n,n}$}}
\begin{proof}[Proof of Theorem \ref{WTT for L1(G)n,n}]
Since the ideal generated by $\{f^\alpha\mid \alpha\in\Lambda\}$ is same as the ideal generated by the elements $\left\{\frac{f^\alpha}{||f^\alpha||}\mid \alpha\in\Lambda\right\}$ and $\delta^{\pm}_\infty(\widehat{f}_H)=
\delta^{\pm}_\infty
\left(   \dfrac{\what{f^\alpha_H}}{||f||_1}\right)$, we can assume that the functions $f^\alpha$ are of unit $L^1$ norm. Let $g\in L^\infty(G)_{n,n}$ annihilates the closed ideal $I$ generated by $\{f^\alpha\mid \alpha\in\Lambda\}$. We will show that $g = 0 $.  Then by an application of Hahn Banach theorem it will follow that $I = L^1(G)_{n,n}$.
From the hypothesis we have,
$$
\inf_{\alpha\in\Lambda}\delta^+_\infty(\widehat{f^\alpha_H})=\inf_{\alpha\in\Lambda}\delta^-_\infty(\widehat{f^\alpha_H})=0.
$$
By Lemma \ref{properties of resolvent transform}, the entire function $\mathcal{R}[g]$ satisfies the following estimates
\begin{eqnarray*}
|\mathcal{R}[g](z)|&\leq & C(1+|z|)\left(d(z,\partial S_1)\right)^{-1},
\hspace{3mm}z\in\mathbb{C}\setminus S_1,\\
|\widehat{f^\alpha_H}(z)\mathcal{R}[g](z)|&\leq & C(1+|z|)\left(d(z,\partial S_1)\right)^{-1},
\hspace{3mm}z\in S_1^0,
\end{eqnarray*}
for all $\alpha\in\Lambda$, where $C$ is a constant and we choose it is  greater than $e$. We can define $M:(0,\infty)\rightarrow(e,\infty)$ to be a continuously differentiable decreasing function such that
$M(t)=\frac{C}{t}$ for $0<t< 1$, and  $\int_1^\infty\log\log M(t)dt<\infty$. With this definition of $M$, we  have 
\begin{eqnarray*}
|\mathcal{R}[g](z)|&\leq & (1+|z|)M\left(d(z,\partial S_1)\right)
\hspace{3mm}z\in\mathbb{C}\setminus S_1,\\
|\widehat{f^\alpha_H}(z)\mathcal{R}[g](z)|&\leq & (1+|z|)M\left(d(z,\partial S_1)\right)
\hspace{3mm}z\in S_1^0, \hspace{2mm} \textup{for all $\alpha \in \Lambda$}.
\end{eqnarray*}
Therefore, by Theorem \ref{Complex analysis Theorem}, $\mathcal{R}[g](z)$ is a polynomial. From Lemma \ref{properties of resolvent transform}, $$\mathcal{R}[g](z) \leq ||b_z||_1 ||g||_\infty.$$   Then  Lemma \ref{L-1 norm estimates of b-lambda} implies  $\mathcal{R}[g](z)\rightarrow 0$ when $z \rightarrow \infty$ along the positive real axis. Therefore $\mathcal{R}[g]$  must be the zero polynomial. Hence  $\langle b_\lambda,g\rangle =0$  whenever $\Re \lambda >1$  and $\lambda \not\in \textbf{B}$ but the collection $\{ b_\lambda | \, \Re \lambda >1 \text{ and } \lambda \not\in \textbf{B} \}$ spans a dense subset of $L^1(G)_{n,n}$ by Lemma \ref{b lambda dense in L1(G)n,n }. So $g= 0$ and the proof follows.
\end{proof}

 Finally we like to mention here that we first started to prove a W-T theorem for $L^1(G)_{m,n}$ but our method fails in this general setting as $L^1(G)_{m,n}$ is not necessarily a commutative banach algebra.
\section{\textbf{Final Results}  } 
Now we prove Wiener Tauberian theorem for $L^1(G)_n$ using  Theorem \ref{WTT for L1(G)n,n}. Here we will follow similar technique as in \cite{Sarkar-1997}. 
 
 For $f \in L^1(G)$ we have from \cite[p. 30, prop 7.3]{Barker},
 \begin{equation}
(\what{f}_B(k) )_{m,n} =\eta^{m,n}(k) \,  (\what{f}_H(k)  )_{m,n} \text{ for all $k \in \lbrace \pm 1\rbrace $ and $m,n  \in \Z (k),$}
  \end{equation}
   where $\eta^{m,n}(k)$ is a positive number. Therefore $$(\what{f}_B(k) )_{m,n} \neq 0 \Leftrightarrow (\what{f}_H(k) )_{m,n} \neq 0.  $$
   Suppose  $\what{f}_B(k) \neq 0$ for all $k \in \Gamma_n$, then it implies the following:
   \begin{enumerate}
   \item[(a)] If $n$ is positive then for every $ m < n , \,  (\what{f}_B(n-1))_{m,n} =0$, so $f $ has at least one non zero component of left type $m$ such that $m \geq n$. Similarly when $n$ is negative $f$ has at least one left type $m$ for some $m \leq n$.
   \item[(b)] Let $f \in L^1(G)_n$ and $n$ is even. If $n > 0$ then by the hypothesis above $\what{f}_B(1) \neq 0$ and so there is an $m$ such that $m \in \Z(1)$ and $(\what{f}_B(1) )_{m,n} \neq 0$. Therefore $(\what{f}_H(1) )_{m,n} \neq  0$ . For $n < 0 $ one can have a similar statement.
 \end{enumerate}
 \begin{proof}[Proof of Theorem \ref{WTT for L1(G)n}.]
 We first consider the case when the collection indexed by $\Omega$ contains exactly one function,  $f \in L^1(G)_n$. Let $ f_m(x) = \int_{0}^{2 \pi} e^{-im\theta} f(k_\theta x)  \, d \theta$ for all $m \in \Z$. Then $f_m$ is an $(m,n)$ type function and $(m,n)$-th matrix coefficient $\what{f}_H$, $(\what{f}_H  )_{m,n} = \what{f_m}_H$.
 
\indent Now we will construct a family of functions in $\C$, $\lbrace  \mathcal{G}_m (\cdot)  \mid m\in \Z^\sigma \rbrace$  such that $\mathcal{G} \in  \mathcal{C}^1_H(\what{G} )_{n,m} $. 


\noindent When $m n \geq 0$ let us define $\mathcal{G}_m(\lambda) =e^{-\lambda^4} Q_{n,m}(\lambda) $ where  $Q_{n,m} = P_{n,m}$ which is the numerator of the rational function $\varphi^{n,m}_\lambda$ from \eqref{peref definition of Pmn lambda}. Hence $e^{-\lambda^4} Q_{n,m}(\lambda) = \varphi^{n,m}_\lambda e^{-\lambda^4} Q_{n,m}(-\lambda) $ which shows that 
\begin{equation}
 \mathcal{G}_m(\lambda) =e^{-\lambda^4} Q_{n,m}(\lambda)  \in  \mathcal{C}^1_H(\what{G} )_{n,m}\end{equation}
 for the case $m   n \geq 0$. Here we note that $Q_{n,m}(0) \neq 0$.
 
\noindent If $m  n <0 $ then we will have to choose the polynomial in a slightly different way because  we want $ \mathcal{G}_m(\lambda)$ to satisfy all the properties of $\mathcal{C}^1_H(\what{G} )_{n,m}$.
\begin{enumerate}
\item[\textsf{ Case 1.}] Let $n$ be odd. Then we take the polynomial $Q_{n,m}' (\lambda) =P_{n,m}(\lambda) \cdot \lambda^2.$ Now $Q_{n,m}' (0) = 0$ and $e^{-\lambda^4} Q_{n,m}'(\lambda) = \varphi^{n,m}_\lambda e^{-\lambda^4} Q_{n,m}'(-\lambda) $. Therefore, in this case 
\begin{equation}
 \mathcal{G}_m(\lambda) =e^{-\lambda^4} Q_{n,m}'(\lambda)  \in  \mathcal{C}^1_H(\what{G} )_{n,m}.
 \end{equation}
\item[\textsf{ Case 2.}] Let $n$ be even( hence $|n| , |m| \geq 2$ as $n  m < 0$ ). Then the required polynomial is $Q_{n,m}'' (\lambda) =P_{n,m}(\lambda) (1-\lambda^2) $. So $Q_{n,m}'' ( \pm 1) = 0$ and $e^{-\lambda^4} Q_{n,m}''(\lambda) = \varphi^{n,m}_\lambda e^{-\lambda^4} Q_{n,m}''(-\lambda) $. Therefore in this case also, 
\begin{equation}
 \mathcal{G}_m(\lambda) =e^{-\lambda^4} Q_{n,m}''(\lambda)  \in  \mathcal{C}^1_H(\what{G} )_{n,m}.
 \end{equation}
\end{enumerate}
Now for all $n,m $
\begin{align*}
\mathcal{G}_m(\lambda)  \what{f_m}_H (\lambda)&= e^{-\lambda^4}  Q_{n,m}(\lambda) \what{f_m}_H(\lambda)\\
& = e^{-\lambda^4}  Q_{n,m}( - \lambda) \, \varphi^{n,m}_\lambda \, \varphi^{m,n}_\lambda \what{f_m}_H(-\lambda)\\
&= \mathcal{G}_m(- \lambda)  \what{f_m}_H (- \lambda).
\end{align*}
Since $f_m $ is an $(m,n)$ type function on $G$ so $\what{f_m}_H ( \lambda) = \varphi^{m,n}_\lambda \what{f_m}_H ( -\lambda)$ and $ \varphi^{n,m}_\lambda = (\varphi^{m,n}_\lambda)^{-1}$. This shows that for  all $m$,  $\mathcal{G}_m(\lambda)  \what{f_m}_H (\lambda)$ is the Fourier transform of an $(n,n)$ type function with respect to principal series representation. Now we claim that $\lambda\in S_1$ there is an $m$ such that $\mathcal{G}_m(\lambda)  \what{f_m}_H (\lambda) \neq 0$. The only possible zeros of the polynomials $ Q_{n,m} ,  Q_{n,m}' $ and $ Q_{n,m}'' $ in $S_1$ are $\lbrace 0, \pm 1 \rbrace$ and everywhere else it is non-zero. Given $\what{f}_H(\lambda) \neq 0$ for all $\lambda \in S_1$. If we could show that for each $\lambda \in \lbrace 0, \pm 1 \rbrace $ there is an $m$ such that $\mathcal{G}_m(\lambda)  \what{f_m}_H (\lambda) \neq 0$ then we will be done.

 Before proving our claim we find out exactly when $\lbrace 0, \pm 1 \rbrace $ are zeros of the polynomials above.
\begin{enumerate}
\item[(i)] $ P_{n,m}(-1) =0 $ if and only if $n =0$ and $ m \neq 0$,\\
 $ P_{n,m}(+1) \neq 0 $ for all $m \neq 0$ and $ P_{n,0}(+1) \neq 0$ when $n \neq 0$,\\
 therefore \begin{equation}\label{zeros of Q n,m case 1}
  Q_{n,m}(\pm1) \neq 0 \text{ when } nm \neq 0 \text{ and } Q_{n,m}'(\pm1) \neq 0 \text{ for all }m \neq 0 .
 \end{equation}
 \item[(ii)] \begin{eqnarray} 
   \text{ Since }Q_{n,m}(0) \neq 0 \text{ so } Q_{n,m}''(0) \neq 0.\label{zeros of Q n,m case 2} \\ Q_{n,m}'(0) = 0 \text{ and } Q_{n,m}''(\pm 1) = 0.\label{zeros of Q n,m case 3}
   \end{eqnarray}
\end{enumerate}

First we consider the case for $\lambda =0.$ By hypothesis there is an $m$ such that $\what{f_m}_H(0) \neq 0 $. 

\noindent If $n$ is odd then $m  n > 0$,  otherwise  $\phi^{m,n}_{\sigma^- , 0} \equiv 0$ which implies $\what{f_m}_H(0) = 0 $. Therefore 
 $\mathcal{G}_m(0)  \what{f_m}_H (0) \neq 0$ as $Q_{n,m}(0) \neq 0$.
 
\noindent Next suppose $n$ is even. Now If $n m \geq 0$ then  $\mathcal{G}_m(0)  \what{f_m}_H (0) \neq 0$ as $Q_{n,m}(0) \neq 0$. When $nm < 0 $ then also $\mathcal{G}_m(0)  \what{f_m}_H (0) \neq 0$ because from \eqref{zeros of Q n,m case 2} $Q_{n,m}''(0) \neq 0$ . 

Now we prove our claim for $\lambda = \pm 1 $. Here we will consider several case for $n$. \begin{enumerate}
\item[\textsf{Case 1.}] Let $n=0$, then $\what{f_m}_H (1) = 0 $ for all $m \neq 0$ as $\phi^{m,0}_{\sigma^+ , 1} \equiv 0$. Therefore $ \what{f_0}_H (1) \neq 0 $ and also $ \what{f_0}_H (1) =  \what{f_0}_H (-1) .$
\item[\textsf{Case 2.}] Let $n (\neq 0)$ be an even no. If $n > 0$ then by discussion(b) preceding this proof, there exists an $r \in \Z(1)$ such that $ \what{f_r}_H (1) \neq 0 $ and so $\mathcal{G}_r(1)  \what{f_r}_H (1) \neq 0$ (since $Q_{n,m}(\pm 1) \neq 0$ for $nm >0$ see \eqref{zeros of Q n,m case 1}). But $\what{f_r}_H ( -1) = \varphi^{n,r}_1 \what{f_r}_H ( 1)$ and $\varphi^{n,r}_\lambda$ has no zero at $\lambda =1$ (see \cite[prop 7.2]{Barker}. This shows that $\what{f_r}_H ( -1) \neq 0$ and so  $\mathcal{G}_m(-1)  \what{f_m}_H (-1) \neq 0$.

\noindent When $n < 0$ we will give similar arguments. By the same discussion(b) there exists an $s \in \Z(-1)$ such that $ \what{f_s}_H (-1) \neq 0 $ and so $\mathcal{G}_s(-1)  \what{f_s}_H (-1) \neq 0$. But $\what{f_s}_H ( -1) = \varphi^{n,s}_1 \what{f_s}_H ( 1)$ and $\varphi^{n,s}_\lambda$ has no pole at $\lambda =1$. This implies $ \what{f_s}_H (1) \neq 0 $ hence $\mathcal{G}_m(-1)  \what{f_m}_H (-1) \neq 0$ (since $Q_{n,m}(\pm 1) \neq 0$ for $nm >0$ see \eqref{zeros of Q n,m case 1}). This concludes our claim when $n$ is an even no.
\item[\textsf{Case 3.}] Let $n$ be an odd no. Then by the hypothesis there exist $ m \in \Z^{\sigma^-}$ such that $ \what{f_m}_H (1) \neq 0 $. Then  $mn \neq 0$ so from \eqref{zeros of Q n,m case 1} it follows $\mathcal{G}_m(1)  \what{f_m}_H (1) \neq 0$. Proof for $\lambda = -1$ is exactly similar.

\end{enumerate}

Let $\mathcal{G}_m'(k) =e^{-k^4} Q_{n,m}(k) $ for all $k \in \Gamma_n$ where $Q_{n,m}$ is chosen in the same way as before . Now  let for $k_0 \in \Gamma_n$, $ \what{f_{m_0}}_B (k_0) \neq 0$ then $m_0 \in \Z(k_0).$ Therefore $Q_{n,m_0}(k_0) \neq 0 $ as all the zeros of the polynomial $P_{n,m_0}$ are either between $m_0$ and $n$ or between $-m_0$ and $-n$ (see \cite[prop. 7.1]{Barker}). Now from Lemma \ref{Isomorphism of schwarz space} isomorphism  between $\mathcal{C}^1(G)_{n,m}$ and $\mathcal{C}^1(\what G)_{n,m}$ for every $m$, there exists $g_m \in \mathcal{C}^1(G)_{n,m}$ such that $\what{g_m}_H (\lambda) =\mathcal{G}_m (\lambda)$ for all $\lambda \in S_1$ and $\what{g_m}_B (k) =\mathcal{G}_m (k)$ for  all $k \in \Gamma_n$.

Now we show the set of $L^1(G)_{n,n}$ functions $\lbrace  g_m * f_m \mid m \in \Z^\sigma \rbrace$  satisfies all the conditions of Theorem \ref{WTT for L1(G)n,n}. Since $Q_{n,m} $'s are always polynomial in $\lambda$, by a simple argument of analysis shows that
\begin{equation}
\lim\limits_{t\rightarrow \infty} e^{-\frac{\pi}{2}t} \log \mid \mathcal{G}_m(it)\mid = 0 \label{limsup of gm is zero}.
\end{equation}
Hence,
\begin{align}
\limsup\limits_{t\rightarrow \infty} e^{-\frac{\pi}{2}t} \log \mid \mathcal{G}_m(it) \what{f_m}_H(it)\mid &= \lim\limits_{t\rightarrow \infty} e^{-\frac{\pi}{2}t} \log \mid (\mathcal{G}_m(it))\mid + \limsup\limits_{t\rightarrow \infty} e^{-\frac{\pi}{2}t} \log \mid \what{f_m}_H(it)\mid \notag \\
& =\limsup\limits_{t\rightarrow \infty} e^{-\frac{\pi}{2}t} \log \mid \what{f_m}_H(it)\mid  \label{limsup of the new collection is same as old one}. 
\end{align}
Therefore by the given hypothesis, \begin{equation}
\inf\limits_{m \in \Z^\sigma} \delta^{\pm }(\mathcal{G}_m  \what{f_m}_H) = 0.
\end{equation}
So we have established that the ideal generated by  $\lbrace  g_m * f_m \mid m \in \Z^\sigma \rbrace$ is dense in $L^1(G)_{n,n}$. But $g_m* f_m = g_m * f$; so the result follows from the fact that the left $L^1(G)$ module generated by $L^1(G)_{n,n}$ is all of $L^1(G)_n.$

Now suppose  $\Lambda$ is an arbitrary index set.  Then out of each $f^\alpha$ by projections we get $f^\alpha_j$ for all $j \in \Z$ which are functions of type $(j,n)$. We apply previous arguments to the collection $\lbrace \what{f^\alpha_{jH}} \mid \alpha \in \Lambda,\, j \in \Z \rbrace$ of functions in $L^1(\what G )_n$ and the theorem follows.\\
 \end{proof}

\begin{proof}[Proof of Theorem \ref{WTT for L1(G)}]
As we have seen in the proof of previous theorem , it is enough to consider the case when the collection contains a single function, namely  $f$. Let $f_j$ be the projection of $f$ to $L^1(G)_j$, for every $ j \in \Z$. For each $j,m \in \Z$, we choose a polynomial $Q_{j,m}$ in $\lambda$ involving $j$ and $m$ so that  $ e^{-\lambda^4} Q_{j,m}(\lambda)  \in \mathcal{C}^1_H(\what G)_{j,m}$.

 When $jm  \geq 0$, $Q_{j,m} = P_{j,m}$  is the numerator of the rational function $\varphi^{j,m}_\lambda$. Now suppose $jm < 0$, then whenever  $j,m$ are odd integers  we take $Q_{j,m}^{'} = \lambda^2 P_{j,m}$ and  if $j,m$ are even integers then we choose  $Q_{j,m}^{''} =(1- \lambda^2) P_{j,m}$, where $P_{j,m}$ is as above.  Then for $m\in \Z$,   $ e^{-\lambda^4} Q_{j,m}(\lambda)  \in \mathcal{C}^1_H(\what G)_{j,m}$. By the isomorphism of $L^1$ Schwartz space $\mathcal{C}^1(G)_{j,m}$ and  $\mathcal{C}^1(\what G)_{j,m}$ (see Lemma \ref{Isomorphism of schwarz space}) there exists $g_{j,m} \in \mathcal{C}^1(G)_{j,m}$ such that $\what{g_{j,m}}_H(\lambda) = e^{-\lambda^4} Q_{j,m}(\lambda)   $ for all $\lambda \in S_1$ and $\what{g_{j,m}}_B(k) = e^{-k^4} Q_{j,m}(k) $ for all $k \in \Gamma_j$. Now for all $m \in \Z$ we consider the following collection of functions,
\begin{equation*}
\mathcal{F}_m = \lbrace f_j * g_{j,m} \mid j \in \Z \rbrace
\end{equation*}
contained in $L^1(G)_m$.

As in \eqref{limsup of gm is zero} and \eqref{limsup of the new collection is same as old one} we have for each $m \in \Z$,
\begin{equation}
\limsup\limits_{t\rightarrow \infty} e^{-\frac{\pi}{2}t} \log \mid \what{g_{j,m}}_H(it) \what{f_{i,j}}_H(it)\mid = \limsup\limits_{t\rightarrow \infty} e^{-\frac{\pi}{2}t} \log \mid \what{f_{i,j}}_H(it)\mid 
\end{equation}
for all $i,j \in \Z$. So,
\begin{equation}
\inf\limits_{i,j \in \Z} \delta^{\pm }(\what{g_{j,m}}_H \what{f_{i,j}}_H) = 0.\label{delta infimum of gim fij is same as gim}
\end{equation}
Now for all $m \in \Z $, Fourier transforms of the elements of $\mathcal{F}_m$ does not have common zeros, follows from \cite[Theorem 1.2]{Sarkar-1997}. Therefore together with \eqref{delta infimum of gim fij is same as gim} it follows that for every $m$, elements of $\mathcal{F}_m$ satisfies all the conditions of Theorem \ref{WTT for L1(G)n} and so $\mathcal{F}_m$ generates $L^1(G)_m$ under left convolution. Now $ f_j * g_{j,m} =  f * g_{j,m}$, for every m. So the two sided closed ideal generated by $f$  contains $L^1(G)_m$ for all $m$.  The smallest closed right $G$-invariant subspace of $L^1(G)$ containing $L^1(G)_m$ for all $m \in \Z$, is $L^1(G)$ itself. Hence the first part of the Theorem \ref{WTT for L1(G)} follows.  The second part of the theorem follows similarly as in \cite[Theorem 1.2]{Sarkar-1997}.

\end{proof}

\noindent \textbf{Acknowledgement.} The author would like to   thank to his supervisor Prof. Sanjoy Pusti for introducing him to the problem and for the many useful discussions during the course of this work. I am grateful to him for encouraging me in research and his guidance.

\end{document}